\newtheoremstyle{dotless}{}{}{\itshape}{}{\bfseries}{}{}{}
\theoremstyle{dotless}
\theoremstyle{plain}
\newtheorem{thm}{Theorem}[section]
\newtheorem{prop}[thm]{Proposition}
\newtheorem{cor}[thm]{Corollary}
\theoremstyle{definition}
\newtheorem{defn}[thm]{Definition}
\newtheorem{rem}[thm]{Remark}
\newtheorem{exa}[thm]{Example}
\newtheorem{que}[thm]{Question}
\newtheorem{cond}[thm]{Condition}
\newcommand{\N} {\mathbb{N}}
\newcommand{\Z} {\mathbb{Z}}
\newcommand{\Q} {\mathbb{Q}}
\newcommand{\R} {\mathbb{R}}
\newcommand{\C} {\mathbb{C}}
\newcommand{\K} {\mathbb{K}}
\newcommand{\D} {\mathbb{D}}
\newcommand{\F} {\mathcal{F}(\Omega)}
\newcommand{\FE} {\mathcal{F}(\Omega,E)}
\newcommand{\FV} {\mathcal{FV}(\Omega)}
\newcommand{\FVE} {\mathcal{FV}(\Omega,E)}
\newcommand{\oacx} {\overline{\operatorname{acx}}}
\DeclareMathOperator{\id}{id}
\DeclareMathOperator{\re}{Re}
\DeclareMathOperator{\dom}{dom}
\providecommand{\differential}{\mathrm{d}}
\renewcommand{\d}{\differential}
\begin{document}

\title[Extension]{Extension of vector-valued functions and sequence space representation}
\author[K.~Kruse]{Karsten Kruse}
\address{Hamburg University of Technology\\ 
Institute of Mathematics \\
Am Schwarzenberg-Campus~3 \\
21073 Hamburg \\
Germany}
\email{karsten.kruse@tuhh.de}

\subjclass[2010]{Primary 46E40, Secondary 46A03, 46E10}

\keywords{extension, vector-valued, $\varepsilon$-product, weight, Fr\'{e}chet-Schwartz space, semi-Montel space}

\date{\today}
\begin{abstract}
 We give a unified approach to handle the problem of extending  
 functions with values in a locally convex Hausdorff space $E$ over a field $\K$, 
 which have weak extensions in a space $\mathcal{F}(\Omega,\K)$ of scalar-valued functions on a set $\Omega$, 
 to functions in a vector-valued counterpart $\FE$ of $\mathcal{F}(\Omega,\K)$. 
 The results obtained base upon a representation of vector-valued functions 
 as linear continuous operators and extend results of Bonet, Frerick, Gramsch and Jord\'{a}. 
 In particular, we apply them to obtain a sequence space representation of $\FE$ from a known representation 
 of $\mathcal{F}(\Omega,\K)$.
\end{abstract}
\maketitle
\section{Introduction}
We study the problem of extending vector-valued functions via the existence of weak extensions. 
The precise description of this problem reads as follows.
Let $E$ be a locally convex Hausdorff space over the field $\K$ of real or complex numbers and 
$\F:=\mathcal{F}(\Omega,\K)$ a locally convex Hausdorff space of 
$\K$-valued functions on a set $\Omega$. 
Suppose that the point evaluations $\delta_{x}$ belong to the dual $\F'$ for every $x\in\Omega$ and that 
there is a locally convex Hausdorff space $\FE$ of $E$-valued functions on $\Omega$ such 
that the map 
\begin{equation}\label{eq:intro}
S\colon \F\varepsilon E \to \FE,\;u\longmapsto [x\mapsto u(\delta_{x})],
\end{equation}
is a linear topological isomorphism into, i.e.\ to its range, where the space of continuous linear operators 
$\F\varepsilon E:=L_{e}(\F_{\kappa}',E)$ is Schwartz' $\varepsilon$-product.
The space $\F\varepsilon E$ can be considered as a linearisation of (a subspace of) $\FE$. 
Linearisations basing on the Dixmier-Ng theorem were used by 
Bonet, Doma\'nski and Lindstr\"{o}m in \cite[Lemma 10, p.\ 243]{BonDomLind2001} resp.\ 
Laitila and Tylli in \cite[Lemma 5.2, p.\ 14]{Laitila2006} to describe the space of weakly holomorphic resp.\ harmonic
functions on the unit disc $\Omega=\D\subset\C$ with values in a (complex) Banach space $E$.

\begin{que}\label{que:weak_strong}
Let $\Lambda$ be a subset of $\Omega$ and $G$ a linear subspace of $E'$.
Let $f\colon\Lambda\to E$ be such that for every $e'\in G$,
the function $e'\circ f\colon\Lambda\to \K$ has an extension in $\F$.
When is there an extension $F\in\FE$ of $f$, i.e.\ $F_{\mid \Lambda}=f$\ ?
\end{que}

An affirmative answer for $\Lambda=\Omega$ and $G=E'$ is called a weak-strong principle. 
For weighted continuous functions on a completely regular Hausdorff space $\Omega$ with values in a semi-Montel or Schwartz space $E$ 
a weak-strong principle is given by Bierstedt in \cite[2.10 Lemma, p.\ 140]{B2}. 
Weak-strong principles for holomorphic functions on open subsets $\Omega\subset\C$ were shown 
by Dunford in \cite[Theorem 76, p.\ 354]{Dunford1938} for Banach spaces $E$ and by
Grothendieck in \cite[Th\'{e}or\`{e}me 1, p.\ 37-38]{Grothendieck1953} for quasi-complete $E$. %ccp
%Jarchow \cite[16.7.2 Theorem, p.\ 362-363]{Jarchow} weak holomorphy $E$ with ccp (see \cite[9-2-8 Definition, p.\ 134]{Wilansky})
For a wider class of function spaces weak-strong principles are due to Grothendieck, mainly, in the case 
that $\F$ is nuclear and $E$ complete (see \cite[Chap.\ II, \S3, n$^\circ$3, Th\'{e}or\`{e}me 13, p.\ 80]{Gro}), 
which covers the case that $\F$ is the space $\mathcal{C}^{\infty}(\Omega)$ of smooth functions on an open set $\Omega\subset\R^{d}$ 
(with its usual topology).
 
%Fortsetzer, alle Funktionale:
Gramsch \cite{Gramsch1977} analized the weak-strong principles of Grothendieck and realized that they can be used to extend functions 
if $\Lambda$ is a set of uniqueness, i.e.\ from $f\in\F$ and $f(x)=0$ for all $x\in\Lambda$ follows that $f=0$, and 
$\F$ a semi-Montel space, $E$ complete and $G=E'$ (see \cite[0.1, p.\ 217]{Gramsch1977}).
An extension result for holomorphic functions where $G=E'$ and $E$ is sequentially complete 
was shown by Bogdanowicz in \cite[Corollary 3, p.\ 665]{Bogdanowicz1969}.

%weniger Funktionale:
%lokal beschraenkt:
Grosse-Erdmann proved in \cite[5.2 Theorem, p.\ 35]{grosse-erdmann1992} for holomorphic functions on $\Lambda=\Omega$ 
that it is sufficient to test locally bounded functions $f$ with values 
in a locally complete space $E$ with functionals from a weak$^{\star}$-dense subspace $G$ of $E'$.  
Arendt and Nikolski \cite{Arendt2000}, \cite{Arendt2006} shortened his proof in the case that $E$ is a Fr\'{e}chet space (see
\cite[Theorem 3.1, p.\ 787]{Arendt2000} and \cite[Remark 3.3, p.\ 787]{Arendt2000}).
Arendt gave an affirmative answer in \cite[Theorem 5.4, p.\ 74]{Arendt2016} for harmonic functions 
on an open subset $\Lambda=\Omega\subset\R^{d}$ where the 
range space $E$ is a Banach space and $G$ a weak$^{\star}$-dense subspace of $E'$. 

%Fortsetzer, weniger Funktionale, lokal beschraenkt:
In \cite{Gramsch1977} Gramsch also derived extension results for a large class of Fr\'{e}chet-Montel spaces $\F$ 
in the case that $\Lambda$ is a special set of uniqueness, 
$E$ sequentially complete and $G$ strongly dense in $E'$ (see \cite[3.3 Satz, p.\ 228-229]{Gramsch1977}). He applied it 
to the space of holomorphic functions and Grosse-Erdmann \cite{grosse-erdmann2004} expanded this result by the case of 
$E$ being $B_{r}$-complete and $G$ only a weak$^{\star}$-dense subspace of $E'$ 
(see \cite[Theorem 2, p.\ 401]{grosse-erdmann2004} and \cite[Remark 2 (a), p.\ 406]{grosse-erdmann2004}). 
In a series of papers \cite{B/F/J}, \cite{F/J}, \cite{F/J/W}, \cite{jorda2005}, \cite{jorda2013} 
these results were generalised and improved 
by Bonet, Frerick, Jord\'{a} and Wengenroth who used \eqref{eq:intro} to obtain extensions for vector-valued functions 
via extensions of linear operators. 
In \cite{jorda2005}, \cite{jorda2013} by Jord\'{a} for holomorphic functions on a domain (i.e.\ open and connected) 
$\Omega\subset\C$ and weighted holomorphic functions on a domain $\Omega$ in a Banach space.
In \cite{B/F/J} by Bonet, Frerick and Jord\'{a} for closed subsheaves $\F$ of the sheaf of 
smooth functions $\mathcal{C}^{\infty}(\Omega)$ on a domain $\Omega\subset\R^{d}$. 
Their results implied some consequences on the work of Bierstedt and Holtmanns \cite{Bierstedt2003} as well.
Further, in \cite{F/J} by Frerick and Jord\'{a} for closed subsheaves $\F$ of smooth functions on a domain $\Omega\subset\R^{d}$ 
which are closed in the sheaf $\mathcal{C}(\Omega)$ of continuous functions
and in \cite{F/J/W} by the first two authors and Wengenroth in the case that $\F$ is the space of bounded functions 
in the kernel of a hypoelliptic linear  partial differential operator, 
in particular, the spaces of bounded holomorphic or harmonic functions. 
The results of \cite{F/J/W} are not used in the present paper but will be treated separately and extended in \cite{kruse2019_3}.

In this paper we present a unified approach to the extension problem for a large class of function spaces. 
The spaces we treat are usally of the kind 
that $\F$ belongs to the class of semi-Montel or Fr\'echet-Schwartz spaces. 
Even quite general weighted spaces $\F$ are treated, at least, if $E$ is a semi-Montel space. 
The case of Banach spaces is handled in \cite{F/J/W} and \cite{kruse2019_3}.
Our approach is based on the representation of (a subspace) of $\FE$ as a space of continuous linear operators 
via the map $S$ from \eqref{eq:intro}. 
All our examples of such spaces are actually of the form of a general weighted space $\FVE$ introduced in \cite{kruse2017} which is 
generated by linear operators $T^{E}$ on a domain in $E^{\Omega}$ and equipped with a kind of graph topology 
(see \prettyref{def:standard_space}). 
Spaces of this form cover many examples of function spaces like the ones we already mentioned and
standard examples of such spaces are weighted spaces of continuously partially differentiable functions 
which are generated by the partial derivative operators. 
The key to generalise \prettyref{que:weak_strong} and to obtain that $S$ is a topological isomorphism (into) lies in a condition on the 
interplay of $S$ and the pair of operators $(T^{E},T^{\K})$ which we call consistency 
(see \prettyref{def:cons_strong} and  \prettyref{thm:S_iso_into}).
This condition is used to extend the mentioned results and we always have to balance the sets $\Lambda$ 
from which we extend our functions and the subspaces $G\subset E'$ 
with which we test. The case of \glq thin\grq{} sets $\Lambda$ and \glq thick\grq{} subspaces $G$ is handled in Section 3 and 5, 
the converse case of \glq thick\grq{} sets $\Lambda$ and \glq thin\grq{} subspaces $G$ in Section 4.
In our last section an application of our results is given to represent the $E$-valued space of $2\pi$-periodic smooth functions 
and the multiplier space of the Schwartz space 
by sequence spaces with explicit isomorphisms describing this representation 
(see \prettyref{cor:Schauder_coeff_space_2pi_per}, \prettyref{cor:Schauder_coeff_space_multiplier}).
\section{Notation and Preliminaries}
The notation and preliminaries are essentially the same as in \cite[Section 2, 3]{kruse2017,kruse2018_1}.
We equip the spaces $\R^{d}$, $d\in\N$, and $\C$ with the usual Euclidean norm $|\cdot|$.
Furthermore, for a subset $M$ of a topological space $X$ we denote by $\overline{M}$ the closure of $M$ in $X$.
For a subset $M$ of a topological vector space $X$, we write $\oacx(M)$ 
for the closure of the absolutely convex hull $\operatorname{acx}(M)$ of $M$ in $X$.

By $E$ we always denote a non-trivial locally convex Hausdorff space (lcHs) over the field 
$\K=\R$ or $\C$ equipped with a directed fundamental system of 
seminorms $(p_{\alpha})_{\alpha\in \mathfrak{A}}$. 
If $E=\K$, then we set $(p_{\alpha})_{\alpha\in \mathfrak{A}}:=\{|\cdot|\}$. 
For more details on the theory of locally convex spaces see \cite{F/W/Buch}, \cite{Jarchow} or \cite{meisevogt1997}.

By $X^{\Omega}$ we denote the set of maps from a non-empty set $\Omega$ to a non-empty set $X$
and by $L(F,E)$ the space of continuous linear operators from $F$ to $E$ 
where $F$ and $E$ are locally convex Hausdorff spaces. 
If $E=\K$, we just write $F':=L(F,\K)$ for the dual space and $G^{\circ}$ for the polar set of $G\subset F$. 
If $F$ and $E$ are (linearly topologically) isomorphic, we write $F\cong E$.
We denote by $L_{t}(F,E)$ the space $L(F,E)$ equipped with the locally convex topology $t$ of uniform convergence 
on the finite subsets of $F$ if $t=\sigma$, on the absolutely convex, compact subsets of $F$ if $t=\kappa$, 
on the absolutely convex, $\sigma(F,F')$-compact subsets of $F$ if $t=\mu$, 
on the precompact (totally bounded) subsets of $F$ if $t=\tau_{pc}$ and on the bounded subsets of $F$ if $t=b$. 
We use the symbol $t(F',F)$ for the corresponding topology on $F'$. 
A linear subspace $G$ of $F'$ is called separating if $f'(x)=0$ for every $f'\in G$ implies $x=0$. 
This is equivalent to $G$ being $\sigma(F',F)$-dense (and $\kappa(F',F)$-dense) in $F'$ by the bipolar theorem. 
The so-called $\varepsilon$-product of Schwartz is defined by $F\varepsilon E:=L_{e}(F_{\kappa}',E)$
where $L(F_{\kappa}',E)$ is equipped with the topology of uniform convergence on the equicontinuous subsets of $F'$. 
This definition of the $\varepsilon$-product coincides with the original 
one by Schwartz \cite[Chap.\ I, \S1, D\'{e}finition, p.\ 18]{Sch1}. 
It is symmetric which means that $F\varepsilon E\cong E\varepsilon F$.
Besides the $\varepsilon$-product of spaces there is an $\varepsilon$-product of continuous linear operators as well. 
For locally convex Hausdorff spaces $F_{i}$, $E_{i}$ and $T_{i}\in L(F_{i},E_{i})$, $i=1,2$, 
we define the $\varepsilon$-product $T_{1}\varepsilon T_{2}\in L(F_{1}\varepsilon F_{2},E_{1}\varepsilon E_{2})$ of the operators $T_{1}$ and $T_{2}$ by 
\[
 (T_{1}\varepsilon T_{2})(u):=T_{2}\circ u\circ T_{1}^{t},\quad u\in F_{1}\varepsilon F_{2},
\]
where $T_{1}^{t}\colon E_{1}'\to F_{1}'$, $e'\mapsto e'\circ T_{1}$, is the dual map of $T_{1}$. 
If $T_{1}$ is an isomorphism and $F_{2}=E_{2}$, then $T_{1}\varepsilon \id_{E_{2}}$ 
is also an isomorphism with inverse $T_{1}^{-1}\varepsilon\id_{E_{2}}$ by \cite[Chap.\ I, \S1, Proposition 1, p.\ 20]{Sch1}.
For more information on the theory of $\varepsilon$-products see \cite{Jarchow}, \cite{Kaballo} and \cite{Sch1}.

Further, for a disk $D\subset F$, i.e.\ a bounded, absolutely convex set, 
the vector space $F_{D}:=\bigcup_{n\in\N}nD$ becomes a normed space if it is equipped with 
gauge functional of $D$ as a norm (see \cite[p.\ 151]{Jarchow}). The space $F$ is called locally 
complete if $F_{D}$ is a Banach space for every closed disk $D\subset F$ (see \cite[10.2.1 Proposition, p.\ 197]{Jarchow}).

In the introduction we already mentioned that linearisations of spaces of vector-valued functions by means of 
$\varepsilon$-products are essential for our approach. Here, one of the important questions 
is which spaces of vector-valued functions can be represented by $\varepsilon$-products. 
Let us recall some basic definitions and results from \cite[Section 3]{kruse2017,kruse2018_1}.
Let $\Omega$ be a non-empty set and $E$ an lcHs. If $\F\subset\K^{\Omega}$
is an lcHs such that $\delta_{x}\in\F'$ for all $x\in\Omega$, then the map 
\[
S\colon \F\varepsilon E\to E^{\Omega},\;u\longmapsto [x\mapsto u(\delta_{x})],
\]
is well-defined and linear. 

\begin{defn}[{$\varepsilon$-into-compatible}]
Let $\Omega$ be a non-empty set and $E$ an lcHs. Let $\F\subset\K^{\Omega}$ and $\FE\subset E^{\Omega}$ 
be lcHs such that $\delta_{x}\in\F'$ for all $x\in\Omega$.
We call the spaces $\F$ and $\FE$ \emph{$\varepsilon$-into-compatible} if the map
\[
S\colon \F\varepsilon E\to \FE,\;u\longmapsto [x\mapsto u(\delta_{x})],
\]
is a well-defined isomorphism into. We call $\F$ and $\FE$ \emph{$\varepsilon$-compatible} if $S$ is an isomorphism. 
We write $S_{\F}$ if we want to emphasise the dependency on $\F$.
\end{defn}

The notion of $\varepsilon$-compatibility was introduced in \cite[Definition 3.4, p.\ 360]{kruse2018_1}. Next, we 
introduce a concept of pairs of operators $T^{\K}$ and $T^{E}$ acting on $\F$ and $\FE$, respectively, whose 
interplay with the map $S$ is the key to answer the question of linearisation of $\FE$ via $\varepsilon$-products 
and to generalise \prettyref{que:weak_strong}. 

\begin{defn}[{strong, consistent}]\label{def:cons_strong}
Let $\Omega$ be a non-empty set and $E$ an lcHs. Let $\F\subset\K^{\Omega}$ and $\FE\subset E^{\Omega}$ 
be lcHs such that $\delta_{x}\in\F'$ for all $x\in\Omega$. Let $(\omega_m)_{m\in M}$ be a family of non-empty sets, 
$T^{\K}_{m}\colon\dom T^{\K}_{m}\to \K^{\omega_{m}}$ 
and $T^{E}_{m}\colon\dom T^{E}_{m}\to E^{\omega_{m}}$ be linear with 
$\F\subset\dom T^{\K}_{m}\subset\K^{\Omega}$ and $\FE\subset\dom T^{E}_{m}\subset E^{\Omega}$ for all $m\in M$.
\begin{enumerate}
\item[a)] We call $(T^{E}_{m},T^{\K}_{m})_{m\in M}$ a \emph{consistent} family for $(\F,E)$, in short $(\mathcal{F},E)$, if 
for every $u\in\F\varepsilon E$, $m\in M$ and $x\in\omega_{m}$ holds
\begin{enumerate}
\item[(i)] $S(u)\in\FE$ and $T^{\K}_{m,x}:=\delta_{x}\circ T^{\K}_{m}\in\F'$,
\item[(ii)] $T^{E}_{m}S(u)(x)=u(T^{\K}_{m,x})$.
\end{enumerate}
\item[b)] We call $(T^{E}_{m},T^{\K}_{m})_{m\in M}$ a \emph{strong} family for $(\F,E)$, in short $(\mathcal{F},E)$, if 
for every $e'\in E'$, $f\in\FE$, $m\in M$ and $x\in\omega_{m}$ holds
\begin{enumerate}
\item[(i)] $e'\circ f\in\F$,
\item[(ii)] $T^{\K}_{m}(e'\circ f)(x)=e'\circ T^{E}_{m}(f)(x)$.
\end{enumerate}
\end{enumerate}
\end{defn}

As a convention we omit the index $m$ of the set $\omega_{m}$, the operators $T^{E}_{m}$ and $T^{\K}_{m}$ if $M$ is a singleton. 
If the family $(T^{E}_{m},T^{\K}_{m})_{m\in M}$ is incorporated in the topology of and $\F$ and $\FE$ in the sense of 
a weighted graph topology, then consistency implies $\varepsilon$-into-compatibility which we are about to explain. 
In this case the spaces $\F$ and $\FE$ are weighted spaces whose topology is induced by a family of weights $\mathcal{V}$ 
and operators $(T^{\K}_{m})_{m\in M}$ and $(T^{E}_{m})_{m\in M}$, respectively.

\begin{defn}[{weight function, \cite[Definition 2, p.\ 1515]{kruse2017}}]\label{def:weight} 
Let $J$ be a non-empty set and $(\omega_{m})_{m\in M}$ a family of non-empty sets. 
We call $\mathcal{V}:=(\nu_{j,m})_{j\in J,m\in M}$  
a family of \emph{weight functions} on $(\omega_{m})_{m\in M}$ if $\nu_{j,m}\colon \omega_{m}\to [0,\infty)$ 
for all $j\in J$, $m\in M$ and 
\begin{equation}\label{eq:loc3}
  \forall \; m\in M,\, x\in\omega_{m}\;\exists\;j\in J:\;0< \nu_{j,m}(x).
\end{equation}
\end{defn}

\begin{defn}[{\cite[Definition 3, p.\ 1515]{kruse2017}}]\label{def:standard_space}
Let $\Omega$ be a non-empty set, a family of weight functions $\mathcal{V}:=(\nu_{j,m})_{j\in J,m\in M}$ 
given on $(\omega_{m})_{m\in M}$ and
$T^{E}_{m}\colon E^{\Omega}\supset\dom T^{E}_{m} \to E^{\omega_{m}}$ a linear map for every $m\in M$. 
Let $\operatorname{AP}(\Omega,E)$ be a linear subspace of $E^{\Omega}$ and define the space of intersections 
\[
F(\Omega,E):=\operatorname{AP}(\Omega,E)\cap\bigl(\bigcap_{m\in M}\dom T^{E}_{m}\bigr)
\]
as well as
\[
\FVE:=\bigl\{f\in F(\Omega,E)\;|\; 
 \forall\;j\in J,\, m\in M,\,\alpha\in \mathfrak{A}:\; |f|_{j,m,\alpha}<\infty\bigr\}
\]
where
\[
|f|_{j,m,\alpha}:=\sup_{x \in \omega_{m}}
p_{\alpha}\bigl(T^{E}_{m}(f)(x)\bigr)\nu_{j,m}(x).
\]
Further, we write $\FV:=\mathcal{FV}(\Omega,\K)$. If we want to emphasise dependencies, we write 
$M(\mathcal{FV})$ or $M(E)$ instead of $M$. We omit the index $\alpha$ if $E$ is a normed space.
\end{defn}

In $\operatorname{AP}(\Omega,E)$ additional properties of the functions are gathered which are not incorporated into the topology. 
It is easy to check that $\FVE$ is locally convex but need not be Hausdorff. Furthermore, we need the point evaluations to be elements 
of the dual $\FV'$ for the map $S$ to be defined. 

\begin{defn}[{$\dom$-space, \cite[Definition 4, p.\ 1515]{kruse2017}}]
We call $\FVE$ a \emph{$\dom$-space} if it is an lcHs, 
the system of seminorms $(|\cdot|_{j,m,\alpha})_{j\in J, m\in M, \alpha\in\mathfrak{A}}$ 
is directed and, in addition, $\delta_{x}\in\FV'$ for every $x\in\Omega$ if $E=\K$.
\end{defn}

\begin{defn}[{generator}]
Consider the $\dom$-spaces $\FV$ and $\FVE$ with $M:=M(\K)=M(E)$. 
\begin{enumerate}
\item[a)] We call $(T^{E}_{m},T^{\K}_{m})_{m\in M}$ from \prettyref{def:standard_space} a \emph{generator} for $(\FV,E)$, in short $(\mathcal{FV},E)$.
\item[b)] We call a generator $(T^{E}_{m},T^{\K}_{m})_{m\in M}$ consistent if it is consistent in the sense of \prettyref{def:cons_strong} a). 
\item[c)] We call a generator $(T^{E}_{m},T^{\K}_{m})_{m\in M}$ strong if it is strong in the sense of \prettyref{def:cons_strong} b). 
\end{enumerate}
\end{defn}

The following remark shows that the preceding definition of a consistent resp.\ strong generator coincides with the 
one given in \cite[Definition 6, p.\ 1516]{kruse2017}.

\begin{rem}
We note that the condition $T^{\K}_{m,x}\in\FV'$ for all $m\in M$ and $x\in\omega_{m}$ in a)(i) of \prettyref{def:cons_strong} 
is always satisfied for generators by \cite[Remark 5 b), p.\ 1516]{kruse2017} and \eqref{eq:loc3}. Moreover, 
if $S(u)\in\operatorname{AP}(\Omega,E)\cap\dom T^{E}_{m}$ for $u\in\FV\varepsilon E$ and all $m\in M$
and a)(ii) of \prettyref{def:cons_strong} is fulfilled, then $S(u)\in\FVE$ by \cite[Lemma 7, p.\ 1517]{kruse2017}, 
implying that a)(i) is satisfied. 
Further, if $f\in\FVE$ and $e'\circ f\in \operatorname{AP}(\Omega)\cap\dom T^{\K}_{m}$ for all $e'\in E'$ and $m\in M$
and b)(ii) of \prettyref{def:cons_strong} is fulfilled, then $e'\circ f\in\FV$ by \cite[Lemma 12, p.\ 1522-1523]{kruse2017}, 
implying that b)(i) is satisfied. 
\end{rem}

\begin{thm}[{\cite[3.9 Theorem, p.\ 9]{kruse2017a}}]\label{thm:S_iso_into}
Let $(T^{E}_{m},T^{\K}_{m})_{m\in M}$ be a consistent generator for $(\mathcal{FV},E)$. 
Then $S\colon \FV\varepsilon E\to \FVE$ is an isomorphism into, i.e.\ $\FV$ and $\FVE$ are $\varepsilon$-into-compatible.
\end{thm}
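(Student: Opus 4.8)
The plan is to verify in turn that $S$ maps $\FV\varepsilon E$ into $\FVE$, that it is continuous, that it is injective, and that its inverse on the range $S(\FV\varepsilon E)$ is continuous; the last of these is where the real work lies. Throughout I would fix, for $j\in J$ and $l\in L$, the set
\[
B_{j,l}:=\{\nu_{j,l,m}(x)\,T^{\K}_{m,x}\mid x\in\Omega,\ m\in M_{l}\}\subset\FV',
\]
which makes sense because $M_{l}\subset\M_{\operatorname{top}}$ and hence $T^{\K}_{m,x}\in\FV'$ by \cite[3.5 Lemma a), p.\ 4]{kruse2017}. Since $|f|_{j,l}=\sup_{b\in B_{j,l}}|b(f)|$ is, by the very definition of $\FV$, a continuous seminorm on $\FV$, the set $B_{j,l}$ is equicontinuous and, by the bipolar theorem, $\oacx(B_{j,l})=\{f\in\FV:|f|_{j,l}\le 1\}^{\circ}$, the closure being taken for $\sigma(\FV',\FV)$.

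For the first two points I would argue straight from consistency. Given $u\in\FV\varepsilon E$, condition (i) puts $S(u)$ in $\operatorname{dom}T^{E}_{m}$ for every $m\in\M$, and (ii) gives $(T^{E}_{m}S(u))(x)=u(T^{\K}_{m,x})$ for $m\in\M$ and $x\in\omega_{m}$. For $m\in\M_{0}$ every $f\in\FV$ lies in $\ker T^{\K}_{m}$, so $T^{\K}_{m,x}$ is the zero functional on $\FV$ and thus $(T^{E}_{m}S(u))(x)=0$; hence $S(u)\in W_{\M}(\Omega,E)$. Moreover (ii) gives
\[
|S(u)|_{j,l,\alpha}=\sup_{x\in\Omega,\ m\in M_{l}}p_{\alpha}\bigl(u(\nu_{j,l,m}(x)T^{\K}_{m,x})\bigr)=\sup_{b\in B_{j,l}}p_{\alpha}(u(b)),
\]
which is finite because $u$ is continuous on $\FV_{\kappa}'$ and $B_{j,l}$ is equicontinuous; so $S(u)\in\FVE$. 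The right-hand side is precisely the value at $u$ of the canonical seminorm of $\FV\varepsilon E=L_{e}(\FV_{\kappa}',E)$ attached to the equicontinuous set $B_{j,l}$ and the seminorm $p_{\alpha}$, so this same identity shows that $S$ is continuous.

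Injectivity I would deduce from the duality $(\FV_{\kappa}')'=\FV$, which holds by the Mackey--Arens theorem since $\kappa(\FV',\FV)$ lies between $\sigma(\FV',\FV)$ and the Mackey topology (absolutely convex compact sets being weakly compact). If $S(u)=0$, i.e.\ $u(\delta_{x})=0$ for all $x\in\Omega$, then for each $e'\in E'$ the functional $e'\circ u\in(\FV_{\kappa}')'$ is represented by some $f\in\FV$ with $f(x)=e'(u(\delta_{x}))=0$ for every $x$; hence $f=0$ as a function, $e'\circ u=0$, and since $E$ is Hausdorff, $u=0$.

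The main obstacle is the continuity of $S^{-1}$ on the range, i.e.\ that every canonical seminorm $u\mapsto\sup_{b\in B}p_{\alpha}(u(b))$ of $\FV\varepsilon E$ with $B\subset\FV'$ equicontinuous is dominated by some $|S(\cdot)|_{j,l,\alpha}$. As the seminorms $|\cdot|_{j,l}$ form a directed fundamental system for $\FV$, there are $j,l$ and $C>0$ with $B\subset C\{|\cdot|_{j,l}\le 1\}^{\circ}=C\,\oacx(B_{j,l})$. The delicate step is then to interchange the supremum of $p_{\alpha}\circ u$ over $B_{j,l}$ with the supremum over its $\sigma$-closed absolutely convex hull: on the equicontinuous set $\oacx(B_{j,l})$ the topologies $\kappa(\FV',\FV)$ and $\sigma(\FV',\FV)$ agree (Bourbaki--Alaoglu), so $p_{\alpha}\circ u$ is $\sigma$-continuous there, and since $\operatorname{acx}(B_{j,l})$ is $\sigma$-dense in $\oacx(B_{j,l})$ and $p_{\alpha}\circ u$ is a seminorm,
\[
\sup_{b\in\oacx(B_{j,l})}p_{\alpha}(u(b))=\sup_{b\in B_{j,l}}p_{\alpha}(u(b))=|S(u)|_{j,l,\alpha}.
\]
Hence $\sup_{b\in B}p_{\alpha}(u(b))\le C\,|S(u)|_{j,l,\alpha}$, and $S$ is a topological isomorphism onto its range. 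Everything apart from this last step is bookkeeping with the consistency conditions, together with the standard facts that $(\FV_{\kappa}')'=\FV$ and that the weak$^{\star}$ and $\kappa$ topologies coincide on equicontinuous subsets of $\FV'$.
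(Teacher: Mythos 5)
Your argument is correct, and since the paper only quotes this theorem from \cite[3.7 Theorem, p.\ 6]{kruse2017} without reproving it, the relevant comparison is with that reference, whose proof runs along essentially the same lines: the identity $|S(u)|_{j,l,\alpha}=\sup_{b\in B_{j,l}}p_{\alpha}(u(b))$ for the equicontinuous sets $B_{j,l}$, the bipolar identification of $\{|\cdot|_{j,l}\leq 1\}^{\circ}$ with $\oacx(B_{j,l})$, and the coincidence of $\sigma(\FV',\FV)$ and $\kappa(\FV',\FV)$ on equicontinuous sets to pass the supremum to the closed hull. Only a cosmetic remark: the fact that the weak$^{\star}$ and $\kappa$ topologies agree on equicontinuous sets is the precompact-convergence statement (e.g.\ \cite[8.5.1]{Jarchow}) rather than the Alaoglu--Bourbaki theorem itself, which you correctly invoke only for relative weak$^{\star}$ compactness.
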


Sufficient conditions for $\varepsilon$-compatibility involving the strength of the generator as well can be found in 
\cite[Theorem 14, p.\ 1524]{kruse2017}. Let us a give a standard example, namely, weighted spaces of continuously partially differentiable 
functions. More examples can be found in \cite{kruse2017,kruse2018_1}. 
We recall the definition of continuous partial differentiability of a vector-valued function. 
A function $f\colon\Omega\to E$ on an open set $\Omega\subset\R^{d}$ to an lcHs $E$ is called 
continuously partially differentiable ($f$ is $\mathcal{C}^{1}$) 
if for the $n$-th unit vector $e_{n}\in\R^{d}$ the limit
\[
(\partial^{e_{n}})^{E}f(x)
:=\lim_{\substack{h\to 0\\ h\in\R, h\neq 0}}\frac{f(x+he_{n})-f(x)}{h}
\]
exists in $E$ for every $x\in\Omega$ and $(\partial^{e_{n}})^{E}f$ 
is continuous on $\Omega$ ($(\partial^{e_{n}})^{E}f$ is $\mathcal{C}^{0}$) for every $1\leq n\leq d$.
For $k\in\N$ a function $f$ is said to be $k$-times continuously partially differentiable 
($f$ is $\mathcal{C}^{k}$) if $f$ is $\mathcal{C}^{1}$ and all its first partial derivatives are $\mathcal{C}^{k-1}$.
A function $f$ is called infinitely continuously partially differentiable ($f$ is $\mathcal{C}^{\infty}$) 
if $f$ is $\mathcal{C}^{k}$ for every $k\in\N$.
For $k\in\N_{\infty}:=\N\cup\{\infty\}$ the linear space of all functions $f\colon\Omega\to E$ which are $\mathcal{C}^{k}$ 
is denoted by $\mathcal{C}^{k}(\Omega,E)$. 
Let $f\in\mathcal{C}^{k}(\Omega,E)$. For $\beta\in\N_{0}^{d}$ with 
$|\beta|:=\sum_{n=1}^{d}\beta_{n}\leq k$ we set 
$(\partial^{\beta_{n}})^{E}f:=f$ if $\beta_{n}=0$, and
\[
(\partial^{\beta_{n}})^{E}f
:=\underbrace{(\partial^{e_{n}})^{E}\cdots(\partial^{e_{n}})^{E}}_{\beta_{n}\text{-times}}f
\]
if $\beta_{n}\neq 0$ as well as 
\[
(\partial^{\beta})^{E}f
:=(\partial^{\beta_{1}})^{E}\cdots(\partial^{\beta_{d}})^{E}f.
\]
If $E=\K$, we usually write $\partial^{\beta}f:=(\partial^{\beta})^{\K}f$.

\begin{exa}[{\cite[3.6, 3.15 Example, p.\ 6, 11, 28]{kruse2017a}}]\label{ex:standard_example}
Let $k\in\N_{\infty}$ and $\Omega\subset\R^{d}$ be open. 
We consider the cases
\begin{enumerate}
\item [(i)] $\omega_{m}:=M_{m}\times\Omega$ with $M_{m}:=\{\beta\in\N_{0}^{d}\;|\;|\beta|\leq \min(m,k)\}$ for all $m\in\N_{0}$, or
\item [(ii)] $\omega_{m}:=\N_{0}^{d}\times\Omega$ for all $m\in\N_{0}$ and $k=\infty$,
\end{enumerate}
and let $\mathcal{V}^{k}:=(\nu_{j,m})_{j\in J, m\in\N_{0}}$ be a directed family 
of weights on $(\omega_{m})_{m\in\N_{0}}$ where directed means that for every $j_{1},j_{2}\in J$ and $m_{1},m_{2}\in\N_{0}$ there 
are $j_{3}\in J$, $m_{3}\in\N_{0}$, $m_{3}\geq m_{1},m_{2}$, and $C>0$ such that $\nu_{j_{1},m_{1}},\nu_{j_{2},m_{2}}\leq C\nu_{j_{3},m_{3}}$. 
We define the weighted space of $k$-times continuously partially differentiable functions with values in an lcHs $E$ as
\[
 \mathcal{CV}^{k}(\Omega,E):=\{f\in\mathcal{C}^{k}(\Omega,E)\;|\;\forall\;j\in J,\,m\in\N_{0},\,
 \alpha\in\mathfrak{A}:\;|f|_{j,m,\alpha}<\infty\} 
\]
where 
\[
 |f|_{j,m,\alpha}:=\sup_{(\beta,x)\in\omega_{m}}
 p_{\alpha}\bigl((\partial^{\beta})^{E}f(x)\bigr)\nu_{j,m}(\beta,x).
\]
Setting $\dom T^{E}_{m}:=\mathcal{C}^{k}(\Omega,E)$ and 
\begin{equation}\label{eq:standard_ex_generator}
 T^{E}_{m}\colon\mathcal{C}^{k}(\Omega,E)\to E^{\omega_{m}},\; f\longmapsto [(\beta,x)\mapsto (\partial^{\beta})^{E}f(x)], 
\end{equation}
as well as $\operatorname{AP}(\Omega,E):=E^{\Omega}$, we observe that $\mathcal{CV}^{k}(\Omega,E)$ is a $\dom$-space and 
\[
 |f|_{j,m,\alpha}=\sup_{x\in\omega_{m}}p_{\alpha}\bigl(T^{E}_{m}f(x)\bigr)\nu_{j,m}(x).
\]

b) The space $\mathcal{C}^{k}(\Omega,E)$ with its usual topology of uniform convergence of all partial derivatives up to order $k$
on compact subsets of $\Omega$ is a special case of a)(i) with $J:=\{K\subset\Omega\;|\;K\;\text{compact}\}$, 
$\nu_{K,m}(\beta,x):=\chi_{K}(x)$, $(\beta,x)\in\omega_{m}$, for all $m\in\N_{0}$ and $K\in J$ 
where $\chi_{K}$ is the characteristic function of $K$.  
In this case we write $\mathcal{W}^{k}:=\mathcal{V}^{k}$ for the family of weight functions.

c) The Schwartz space is defined by
\[
\mathcal{S}(\R^{d},E)
:=\{f\in \mathcal{C}^{\infty}(\R^{d},E)\;|\;
\forall\;m\in\N_{0},\,\alpha\in \mathfrak{A}:\;|f|_{m,\alpha}<\infty\}
\]
where 
\[
 |f|_{m,\alpha}:=\sup_{\substack{x\in\R^{d}\\ \beta\in\N_{0}^{d},|\beta|\leq m}}
 p_{\alpha}\bigl((\partial^{\beta})^{E}f(x)\bigr)(1+|x|^{2})^{m/2}.
\]
This is a special case of a)(i) with $k=\infty$, $\Omega=\R^{d}$, $J=\{1\}$ and $\nu_{1,m}(\beta,x):=(1+|x|^{2})^{m/2}$, 
$(\beta,x)\in\omega_{m}$, for all $m\in\N_{0}$.

d) Let $\mathfrak{K}:=\{K\subset\Omega\;|\;K\;\text{compact}\}$ 
and $(M_{p})_{p\in\N_{0}}$ be a sequence of positive real numbers. 
The space $\mathcal{E}^{(M_{p})}(\Omega,E)$ of ultradifferentiable functions of class $(M_{p})$ of Beurling-type is defined as 
\[
\mathcal{E}^{(M_{p})}(\Omega,E)
:=\{f\in \mathcal{C}^{\infty}(\Omega,E)\;|\;\forall\;K\in\mathfrak{K},\,h>0,\,\alpha\in \mathfrak{A}:\;|f|_{(K,h),\alpha}<\infty\}
\]
where 
\[
 |f|_{(K,h),\alpha}:=\sup_{\substack{x\in K \\ \beta\in\N_{0}^{d}}}
 p_{\alpha}\bigl((\partial^{\beta})^{E}f(x)\bigr)\frac{1}{h^{|\beta|}M_{|\beta|}}.
\]
This is a special case of a)(ii) with $J:=\mathfrak{K}\times\R_{>0}$ and 
$\nu_{(K,h),m}(\beta,x):=\chi_{K}(x)\frac{1}{h^{|\beta|}M_{|\beta|}}$, $(\beta,x)\in\omega_{m}$, 
for all $(K,h)\in J$ and $m\in\N_{0}$.

e) Let $\mathfrak{K}$ and $(M_{p})_{p\in\N_{0}}$ be as in d). 
The space $\mathcal{E}^{\{M_{p}\}}(\Omega,E)$ of ultradifferentiable functions of class $\{M_{p}\}$ of Roumieu-type is defined as
\[
\mathcal{E}^{\{M_{p}\}}(\Omega,E)
:=\{f\in \mathcal{C}^{\infty}(\Omega,E)\;|\;\forall\;(K,H)\in J,\,\alpha\in \mathfrak{A}:\;|f|_{(K,H),\alpha}<\infty\}
\]
where 
\[
J:=\mathfrak{K}\times\{H=(H_{n})_{n\in\N}\;|\;\exists\;(h_{k})_{k\in\N},\,h_{k}>0,\,h_{k}\nearrow\infty
\;\forall\;n\in\N:\;
H_{n}=h_{1}\cdot\ldots\cdot h_{n}\}
\]
and
\[
 |f|_{(K,H),\alpha}:=\sup_{\substack{x\in K\\ \beta\in\N_{0}^{d}}}
 p_{\alpha}\bigl((\partial^{\beta})^{E}f(x)\bigr)\frac{1}{H_{|\beta|}M_{|\beta|}}
\]
(see \cite[Proposition 3.5, p.\ 675]{Kom9}). Again, this is a special case of a)(ii) with 
$\nu_{(K,H),m}(\beta,x):=\chi_{K}(x)\frac{1}{H_{|\beta|}M_{|\beta|}}$, $(\beta,x)\in\omega_{m}$, 
for all $(K,H)\in J$ and $m\in\N_{0}$.

f) Let $n\in\N$, $\beta_{i}\in\N_{0}^{d}$ with $|\beta_{i}|\leq k$ and 
$a_{i}\colon\Omega\to\K$ for $1\leq i\leq n$. We set 
\[
 P(\partial)^{E}\colon \mathcal{C}^{k}(\Omega,E)\to E^{\Omega},\; P(\partial)^{E}(f)(x):=\sum_{i=1}^{n}a_{i}(x)(\partial^{\beta_{i}})^{E}(f)(x).
\]
and obtain the (topological) subspace of $\mathcal{CV}^{k}(\Omega,E)$ given by 
\[
 \mathcal{CV}_{P(\partial)}^{k}(\Omega,E):=\{f\in\mathcal{CV}^{k}(\Omega,E)\;|\;f\in\ker P(\partial)^{E}\}.
\]

g) In the case (i), i.e.\ $\omega_{m}=M_{m}\times\Omega$ with $M_{m}=\{\beta\in\N_{0}^{d}\;|\;|\beta|\leq \min(m,k)\}$ for all $m\in\N_{0}$,
we define the topological subspace of $\mathcal{CV}^{k}(\Omega,E)$ from a)
consisting of the functions that vanish with all their derivatives when weighted at infinity by 
 \begin{align*}
  \mathcal{CV}^{k}_{0}(\Omega,E):=\{f\in\mathcal{CV}^{k}(\Omega,E)\;|\;&\forall\;j\in J,\,
  m\in\N_{0},\,\alpha\in\mathfrak{A},\,\varepsilon>0\\
  &\exists\;K\subset \Omega\;\text{compact}:\;|f|_{\Omega\setminus K,j,m,\alpha}<\varepsilon\} 
 \end{align*}
where 
 \[
  |f|_{\Omega\setminus K,j,m,\alpha}:=\sup_{\substack{x\in\Omega\setminus K\\ \beta\in M_{m}}}
  p_{\alpha}\bigl((\partial^{\beta})^{E}f(x)\bigr)\nu_{j,m}(\beta,x).
 \] 
Further, we define its subspace $\mathcal{CV}^{k}_{P(\partial),0}(\Omega,E):=\mathcal{CV}_{0}^{k}(\Omega,E)\cap\mathcal{CV}_{P(\partial)}^{k}(\Omega,E)$ 
with the linear partial differential operator $P(\partial)^{E}$ from f).
\end{exa}

If $\mathcal{V}^{k}$, $k\in\N_{\infty}$, is locally bounded away from zero on an open set $\Omega\subset\R^{d}$, i.e.\ 
for every compact set $K\subset\Omega$ and $m\in\N_{0}$ there is $j\in J$ such that 
\[
\inf_{\substack{x\in K, \beta\in\N_{0}^{d}\\ |\beta|\leq \min(m,k)}}\nu_{j,m}(\beta,x)>0,
\]
then the inclusion $\mathcal{CV}^{k}(\Omega)\to\mathcal{CW}^{k}(\Omega)$, $f\mapsto f$, is continuous
and we have the following result concerning consistency, strength and $\varepsilon$-into-compatibility 
by virtue of the Banach-Steinhaus theorem.

\begin{prop}\label{prop:S_iso_into_standard_example}
Let $E$ be an lcHs, $k\in\N_{\infty}$, $\mathcal{V}^{k}$ a directed family of weights which is locally bounded away from zero 
on an open set $\Omega\subset\R^{d}$ and $\mathcal{F}(\Omega)$ barrelled where $\mathcal{F}$ stands for 
$\mathcal{CV}^{k}$, $\mathcal{CV}^{k}_{0}$, $\mathcal{CV}^{k}_{P(\partial)}$ or $\mathcal{CV}^{k}_{P(\partial),0}$. 
Then the following holds.
\begin{enumerate}
\item[a)] If $u\in\F\varepsilon E$, then $S(u)\in\mathcal{C}^{k}(\Omega,E)$ and 
\[
(\partial^{\beta})^{E}S(u)(x)=u(\delta_{x}\circ\partial^{\beta}),\quad\beta\in\N_{0}^{d},\;|\beta|\leq k,\;x\in\Omega.
\]

\item[b)] If $e'\in E'$ and $f\in\FE$, then $e'\circ f\in\mathcal{C}^{k}(\Omega,E)$ and
\[
\partial^{\beta}(e'\circ f)(x)=e'\bigl((\partial^{\beta})^{E}f(x)\bigr),\quad \beta\in\N_{0}^{d},\;|\beta|\leq k,\;x\in\Omega.
\]
\item[c)] $((\partial^{\beta})^{E},\partial^{\beta})_{\beta\in\N_{0}^{d},|\beta|\leq m}$ with $m\leq k$ is a strong, 
consistent family for $(\mathcal{F},E)$.
\item[d)] $(T^{E}_{m},T^{\K}_{m})_{m\in\N_{0}}$ from \eqref{eq:standard_ex_generator} is a strong, consistent generator for $(\mathcal{F},E)$.
\item[e)] $\F$ and $\FE$ are $\varepsilon$-into-compatible.
\end{enumerate}
\end{prop}
\begin{proof}
Part a) is shown in \cite[Proposition 10, p.\ 1520]{kruse2017} and b) in the proof of 
\cite[Proposition 9, p.\ 1519]{kruse2017}. 
Part c) is included in part d) by the definition of the generator. 
The consistency and strength for $(\mathcal{F},E)$ in part d) is a direct consequence of a) and b) if $\mathcal{F}=\mathcal{CV}^{\infty}$. 
The additional properties of vanishing at infinity or being in the kernel of $P(\partial)$ needed for $S(u)$ in (i) of 
\prettyref{def:cons_strong} a) for $u\in\F\varepsilon E$ and for $e'\circ f$ in (i) of \prettyref{def:cons_strong} b) 
for $e'\in E'$ and $f\in\FE$ if $\mathcal{F}=\mathcal{CV}^{k}_{0}$, $\mathcal{CV}^{k}_{P(\partial)}$ or $\mathcal{CV}^{k}_{P(\partial),0}$ 
are proved in \cite[Proposition 3.15 a), p.\ 243]{kruse2018_2} 
for $\mathcal{F}=\mathcal{CV}^{\infty}_{0}$, in \cite[Proposition 9, p.\ 1519]{kruse2017} for $\mathcal{F}=\mathcal{CV}^{\infty}_{P(\partial)}$ and in the proof of \cite[5.10 Example b), p.\ 28]{kruse2017a} for 
$\mathcal{F}=\mathcal{CV}^{\infty}_{P(\partial),0}$. 
Part e) follows from d) by \prettyref{thm:S_iso_into}.
\end{proof}
\section{Extension of vector-valued functions}
Using the functionals $T^{\K}_{m,x}$, we extend the definition of a set of uniqueness 
and a space of restrictions given in \cite[Definition 4, 5, p.\ 230]{B/F/J}. This prepares the ground for a generalisation 
of \prettyref{que:weak_strong} using a strong, consistent family $(T^{E}_{m},T^{\K}_{m})_{m\in M}$.

\begin{defn}[{set of uniqueness}]
Let $\Omega$ be a non-empty set, $\F\subset\K^{\Omega}$ an lcHs, $(\omega_m)_{m\in M}$ be a family of non-empty sets 
and $T^{\K}_{m}\colon \F\to \K^{\omega_{m}}$ be linear for all $m\in M$.
$U\subset\bigcup_{m\in M}\{m\}\times\omega_{m}$ is called 
a \emph{set of uniqeness} for $(T^{\K}_{m},\mathcal{F})_{m\in M}$ if
\begin{enumerate}
\item [(i)] $\forall\; (m,x)\in U:\; T^{\K}_{m,x}\in \F' $,
\item [(ii)] $\forall\; f\in\F: \;T^{\K}_{m}(f)(x)=0 \;\;\forall\,(m,x)\in U\;\;\Rightarrow\;\; f=0$.
\end{enumerate}
We omit the index $m$ in $\omega_{m}$ and $T^{\K}_{m}$ if $M$ is a singleton and consider $U$ as a subset of $\Omega$.
\end{defn}

If $U$ is a set of uniqueness for $(T^{\K}_{m},\mathcal{F})_{m\in M}$, then $\operatorname{span}\{T^{\K}_{m,x}\;|\;(m,x)\in U\}$ is
dense in $\F_{\sigma}'$ (and $\F_{\kappa}'$) by the bipolar theorem. 

\begin{rem}\label{rem:Schauder_coeff_set_uni} 
Let $\Omega$ be a non-empty set and $\F\subset\K^{\Omega}$ an lcHs.
\begin{enumerate}
 \item[a)] A simple set of uniqueness for $(\id_{\K^{\Omega}},\mathcal{F})$ is given by $U:=\Omega$ if 
 $\delta_{x}\in\F'$ for all $x\in\Omega$.
 \item[b)] If $\F$ has a Schauder basis $(f_{n})_{n\in\N}$  with associated sequence of coefficient functionals 
 $T^{\K}:=(T^{\K}_{n})_{n\in\N}$. Then $U:=\N$ is a set of uniqueness for $(T^{\K},\mathcal{F})$.
\end{enumerate}
\end{rem}

An example for b) is the space of holomorphic functions on an open disc $\D_{r}(z_{0})\subset\C$ 
with radius $0<r\leq\infty$ and center $z_0\in\C$. 
If we equip this space with compact-open topology, then it has the shifted monomials $((\cdot-z_{0})^{n})_{n\in\N_{0}}$ 
as a Schauder basis 
with the point evaluations $(\delta_{z_0}\circ\partial_{\C}^{n})_{n\in\N_{0}}$ 
given by $(\delta_{z_0}\circ\partial_{\C}^{n})(f):=f^{(n)}(z_{0})$ 
as associated sequence of coefficient functionals where $f^{(n)}(z_{0})$ 
denotes the $n$-th complex derivative at $z_{0}$ of a holomorphic function $f$ 
on $\D_{r}(z_{0})$. We will explore further sets of uniqueness for concrete function spaces in the upcoming examples 
and come back to b) in our last section. 

\begin{defn}[{restriction space}]
Let $G\subset E'$ be a separating subspace and $U$ a set of uniqueness for $(T^{\K}_{m},\mathcal{F})_{m\in M}$.
Let $\mathcal{F}_{G}(U,E)$ be the space of functions $f\colon U\to E$ such that for every $e'\in G$ there is 
$f_{e'}\in\F$ with $T^{\K}_{m}(f_{e'})(x)=e'\circ f(m,x)$ for all $(m,x)\in U$.
\end{defn}

\begin{rem}\label{rem:R_f}
Since $U$ is a set of uniqueness, the functions $f_{e'}$ are unique and the map 
$\mathscr{R}_{f}\colon E'\to \F,$ $\mathscr{R}_{f}(e'):=f_{e'}$, is well-defined and linear.
\end{rem}

\begin{rem}\label{rem:R_well-defined}
Let $\F$ and $\FE$ be $\varepsilon$-into-compatible.
Consider a set of uniqueness $U$ for $(T^{\K}_{m},\mathcal{F})_{m\in M}$, a separating subspace $G\subset E'$ and 
a strong, consistent family $(T^{E}_{m},T^{\K}_{m})_{m\in M}$ for $(\mathcal{F},E)$.
For $u\in \F\varepsilon E$ set $f:=S(u)$. Then $f\in\FE$ by the $\varepsilon$-into-compatibility 
and we set $\widetilde{f}\colon U\to E$, $\widetilde{f}(m,x):=T^{E}_{m}(f)(x)$. It follows that
\[
e'\circ \widetilde{f}(m,x)= (e'\circ T^{E}_{m}(f))(x)=T^{\K}_{m}(e'\circ f)(x)
\]
for all $(m,x)\in U$ and $f_{e'}:=e'\circ f\in\F$ for all $e'\in E'$ by the strength of the family.
We conclude that $\widetilde{f}\in\mathcal{F}_{G}(U,E)$.
\end{rem} 

Under the assumptions of the preceding remark the map
\[
R_{U,G}\colon S(\F\varepsilon E)\to \mathcal{F}_{G}(U,E),\;f\mapsto (T^{E}_{m}(f)(x))_{(m,x)\in U},
\]
is well-defined. The map $R_{U,G}$ is also linear since $T^{E}_{m}$ is linear for all $m\in M$. 
Further, the strength of the defining family guarantees that $R_{U,G}$ is injective.

\begin{prop}\label{prop:injectivity}
Let $\F$ and $\FE$ be $\varepsilon$-into-compatible, 
$G\subset E'$ a separating subspace and $U$ a set of uniqueness for $(T^{\K}_{m},\mathcal{F})_{m\in M}$.
If $(T^{E}_{m},T^{\K}_{m})_{m\in M}$ is a strong family for $(\mathcal{F},E)$, 
then the map 
\[
T^{E}\colon \FE\to E^{U}, \;f\mapsto (T^{E}_{m}(f)(x))_{(m,x)\in U},
\]
is injective, in particular, $R_{U,G}$ is injective.
\end{prop}
\begin{proof}
Let $f\in \FE$ with $T^{E}(f)=0$. Then
\[ 
0=e'\circ T^{E}(f)(m,x)=e'\circ T^{E}_{m}(f)(x)=T^{\K}_{m}(e'\circ f)(x),\quad (m,x)\in U,
\]
and $e'\circ f \in\F$ for all $e'\in E'$ by the strength of the family. 
Since $U$ is a set of uniqueness, we get that $e'\circ f=0$ for all $e'\in E'$, which implies $f=0$.
\end{proof}

\begin{que}\label{que:surj_restr_set_unique}
Let $\F$ and $\FE$ be $\varepsilon$-into-compatible,
$G\subset E'$ a separating subspace, 
$(T^{E}_{m},T^{\K}_{m})_{m\in M}$ a strong family for $(\mathcal{F},E)$
and $U$ a set of uniqueness for $(T^{\K}_{m},\mathcal{F})_{m\in M}$.
When is the injective restriction map 
\[
R_{U,G}\colon S(\F\varepsilon E)\to \mathcal{F}_{G}(U,E),\;f\mapsto (T^{E}_{m}(f)(x))_{(m,x)\in U},
\]
surjective?
\end{que}

The \prettyref{que:weak_strong} is a special case of this question if there is a 
set of uniqueness $U$ for $(T^{\K}_{m},\mathcal{F})_{m\in M}$ with 
$\{T^{\K}_{m,x}\;|\; (m,x)\in U\}=\{\delta_{x}\;|\;x\in\Lambda\}$, $\Lambda\subset\Omega$. 
We observe that a positive answer to the surjectivity of $R_{\Omega,G}$ results in the following weak-strong principle.

\begin{prop}\label{prop:weak_strong_principle}
Let $\F$ and $\FE$ be $\varepsilon$-into-compatible, $G\subset E'$ a separating subspace such that 
$e'\circ f\in \F$ for all $e'\in G$ and $f\in\FE$. 
If 
\[
R_{\Omega,G}\colon S(\F\varepsilon E)\to\mathcal{F}_{G}(\Omega,E),\;f\mapsto f,
\]
with the set of uniqueness $\Omega$ for $(\id_{\K^{\Omega}},\mathcal{F})$ is surjective, then 
\[
 \F\varepsilon E\cong \FE \quad\text{via}\;S\quad\text{and}\quad\FE=\{f\colon\Omega\to E\;|\;\forall\;e'\in G:\;e'\circ f\in\F \}.
\]
\end{prop}
\begin{proof}
From the $\varepsilon$-into-compatibility and the surjectivity of $R_{\Omega,G}$ we obtain
\[
 \{f\colon\Omega\to E\;|\;\forall\;e'\in G:\;e'\circ f\in\F\}=\mathcal{F}_{G}(\Omega,E)=S(\F\varepsilon E)\subset \FE. 
\]
Further, the assumption that $e'\circ f\in \F$ for all $e'\in G$ and $f\in\FE$, implies that $\FE$ is a subspace of the space 
on the left-hand side, which proves our statement, in particular, the surjectivity of $S$.
\end{proof}

To answer \prettyref{que:surj_restr_set_unique} for general sets of uniqueness we have to restrict to a certain class
of separating subspaces of $E'$.

\begin{defn}[{determine  boundedness \cite[p.\ 230]{B/F/J}}]
A linear subspace $G\subset E'$ \emph{determines boundedness} if every $\sigma(E,G)$-bounded set $B\subset E$ is 
already bounded in $E$.
\end{defn}

In \cite[p.\ 139]{fernandez1989} such a space $G$ is called uniform boundedness deciding by Fern\'andez et al.\ and 
in \cite[p.\ 63]{nygaard2006} $w^{\ast}$-thick by Nygaard if $E$ is a Banach space.

\begin{rem} 
\begin{enumerate}
\item[a)] Let $E$ be an lcHs. Then $G:=E'$ determines boundedness by \cite[Mackey's theorem 23.15, p.\ 268]{meisevogt1997}.
\item[b)] Let $X$ be a barrelled lcHs, $Y$ an lcHs and $E:=L_{b}(X,Y)$. For $x\in X$ and $y'\in Y'$ we set 
$\delta_{x,y'}\colon L(X,Y)\to \K$, $T\to y'(T(x))$, and $G:=\{\delta_{x,y'}\;|\;x\in X,\,y'\in Y'\}\subset E'$. 
Then the span of $G$ determines boundedness (in $E$) by Mackey's theorem and the uniform boundedness principle. 
For Banach spaces $X$,$Y$ this is already observed in \cite[Remark 11, p.\ 233]{B/F/J} and, if in addition $Y=\K$, 
in \cite[Remark 1.4 b), p.\ 781]{Arendt2000}.
%Indeed, denote by $(q_{\beta,X})_{\beta\in\mathfrak{A}_{X}}$ and by $(q_{\gamma,Y})_{\gamma\in\mathfrak{A}_{Y}}$ the system of seminorms 
%which generates the topology of $X$ and $Y$, respectively. Let $B\subset E=L_{b}(X,Y)$ be such that 
%$\sup_{T\in B}|\delta_{x,y'}(T)|=\sup_{T\in B}|y'(T(x))|<\infty$ for all $x\in X$ and $y'\in Y'$. 
%By Mackey's theorem this means that $\{T(x)\;|\;T\in B\}$ is bounded in $Y$ for all $x\in X$, i.e.\ 
%$B$ is bounded in $L_{\sigma}(X,Y)$. Since $X$ is barrelled, $B$ is equicontinuous by the uniform bundedness principle. 
%Thus for every $\gamma\in\mathfrak{A}_{Y}$ there are $C>0$ and $\beta\in\mathfrak{A}_{X}$ such that 
%$q_{\gamma,Y}(T(x))\leq Cq_{\beta,X}(x)$ for all $T\in B$ and $x\in X$. Hence for a 
%bounded set $D\subset X$ we have 
%\[
%\sup_{T\in B}\sup_{x\in D}q_{\gamma,Y}(T(x))\leq C\sup_{x\in D}q_{\beta,X}(x)<\infty,
%\]
%implying that $B$ is bounded in $L_{b}(X,Y)$.
\item[c)] Further examples and a characterisation of subspaces $G\subset E'$ that determine boundedness can be found 
in \cite[Remark 1.4, p.\ 781-782]{Arendt2000}, \cite[Theorem 1.5, p.\ 63-64]{nygaard2006}
and \cite[Theorem 2.3, 2.4, p.\ 67-68]{nygaard2006} in the case that $E$ is a Banach space. 
\end{enumerate}
\end{rem}

\section*{\texorpdfstring{$\F$}{F(Omega)} a semi-Montel space and \texorpdfstring{$E$}{E} (sequentially) complete}

Our next results are in need of spaces $\F$ such that closed graph theorems hold with Banach spaces as domain spaces and 
$\F$ as the range space. Let us formally define this class of spaces. 

\begin{defn}[{BC-space \cite[p.\ 395]{powell1955}}]
We call an lcHs $F$ a \emph{BC-space} if for every Banach space $X$ and every linear map $f\colon X\to F$ 
with closed graph in $X\times F$, one has that $f$ is continuous. 
\end{defn}

A characterisation of BC-spaces is given by Powell in \cite[6.1 Corollary, p.\ 400-401]{powell1955}. 
Since every Banach space is ultrabornological and barrelled, the 
\cite[Closed graph theorem 24.31, p.\ 289]{meisevogt1997} of de Wilde 
and the Pt\'{a}k-K\={o}mura-Adasch-Valdivia closed graph theorem \cite[\S34, 9.(7), p.\ 46]{Koethe} 
imply that webbed spaces and $B_{r}$-complete spaces (infra-Pt\'{a}k spaces) are BC-spaces, 
for instance $B$-complete spaces, Fr\'{e}chet spaces, LF-spaces and strong duals of LF-spaces.
The following proposition is a modification of \cite[Satz 10.6, p.\ 237]{Kaballo} and 
uses the map $\mathscr{R}_{f}\colon e'\mapsto f_{e'}$ from \prettyref{rem:R_f}. 

\begin{prop}\label{prop:ext_F_semi_M}
Let $U$ be a set of uniqueness for $(T^{\K}_{m},\mathcal{F})_{m\in M}$ and $\F$ a BC-space.
Then $\mathscr{R}_{f}(B_{\alpha}^{\circ})$ is bounded in $\F$ for every $f\in\mathcal{F}_{E'}(U,E)$ 
and $\alpha\in\mathfrak{A}$ where $B_{\alpha}:=\{x\in E\;|\; p_{\alpha}(x)<1\}$. 
In addition, if $\F$ is semi-Montel, then $\mathscr{R}_{f}(B_{\alpha}^{\circ})$ is relatively compact in $\F$.
\end{prop}
\begin{proof}
Let $f\in\mathcal{F}_{E'}(U,E)$ and $\alpha\in\mathfrak{A}$. The polar $B_{\alpha}^{\circ}$ 
is compact in $E_{\sigma}'$ and thus $E_{B_{\alpha}^{\circ}}'$ is a Banach space 
by \cite[Corollary 23.14, p.\ 268]{meisevogt1997}.
We claim that the restriction of $\mathscr{R}_{f}$ to $E_{B_{\alpha}^{\circ}}'$ has closed graph. Indeed, let 
$(e_{\iota}')$ be a net in $E_{B_{\alpha}^{\circ}}'$ converging to $e'$ in $E_{B_{\alpha}^{\circ}}'$ and  
$\mathscr{R}_{f}(e_{\iota}')$ converging to $g$ in $\F$. For $(m,x)\in U$ we note that
\begin{align*}
  T^{\K}_{m,x}\bigl(\mathscr{R}_{f}(e_{\iota}')\bigr)
&=T^{\K}_{m}(f_{e_{\iota}'})(x)
 =(e_{\iota}'\circ f)(m,x)
 \to (e'\circ f)(m,x)=T^{\K}_{m}(f_{e'})(x)\\
&=T^{\K}_{m}\bigl(\mathscr{R}_{f}(e')\bigr)(x).
\end{align*}
The left-hand side converges to $T^{\K}_{m,x}(g)$ since $T^{\K}_{m,x}\in\F'$ for all 
$(m,x)\in U$. Hence we have $T^{\K}_{m}(g)(x)=T^{\K}_{m}\bigl(\mathscr{R}_{f}(e')\bigr)(x)$ 
for all $(m,x)\in U$. From $U$ being a set of uniqueness follows that $g=\mathscr{R}_{f}(e')$. 
Thus the restriction of $\mathscr{R}_{f}$ to the Banach space $E_{B_{\alpha}^{\circ}}'$ has closed graph 
and is continuous since $\F$ is a BC-space. This yields that $\mathscr{R}_{f}(B_{\alpha}^{\circ})$ 
is bounded as $B_{\alpha}^{\circ}$ is bounded in $E_{B_{\alpha}^{\circ}}'$.
If $\F$ is also a semi-Montel space, then $\mathscr{R}_{f}(B_{\alpha}^{\circ})$ is even relatively compact.
\end{proof}

Now, we are ready to prove our first extension theorem. Its proof of surjectivity of $R_{U,E'}$ 
is just an adaption of the proof of surjectivity of $S$ given in \cite[Theorem 14, p.\ 1524]{kruse2017}. 
Let $U$ be a set of uniqueness for $(T^{\K}_{m},\mathcal{F})_{m\in M}$. 
For $f\in\mathcal{F}_{E'}(U,E)$ we consider the dual map 
\[
\mathscr{R}_{f}^{t}\colon \F' \to E'^{\star},\;\mathscr{R}_{f}^{t}(f')(e'):=f'(f_{e'}),
\]
where $E'^{\star}$ is the algebraic dual of $E'$. 
We identify $E$ with a linear subspace of $E'^{\star}$ 
by the canonical injection $ x\longmapsto [e'\mapsto e'(x)]=:\langle x, e'\rangle$.

\begin{thm}\label{thm:ext_F_semi_M}
Let $\F$ and $\FE$ be $\varepsilon$-into-compatible, 
$(T^{E}_{m},T^{\K}_{m})_{m\in M}$ a strong, consistent family for $(\mathcal{F},E)$, 
$\F$ a semi-Montel BC-space and $U$ a set of uniqueness for $(T^{\K}_{m},\mathcal{F})_{m\in M}$. 
If
\begin{enumerate}
\item [(i)] $E$ is complete, or if
\item [(ii)] $E$ is sequentially complete and for every $f\in\mathcal{F}_{E'}(U,E)$ and $f'\in\F'$ there is 
a sequence $(f_{n}')_{n\in\N}$ in $\F'$ converging to $f'$ in $\F_{\kappa}'$ such that 
$\mathscr{R}_{f}^{t}(f_{n}')\in E\subset E'^{\star}$ for every $n\in\N$, 
\end{enumerate}
then the restriction map $R_{U,E'}\colon S(\F\varepsilon E)\to \mathcal{F}_{E'}(U,E)$ is surjective.
\end{thm}
\begin{proof} 
Let $f\in\mathcal{F}_{E'}(U,E)$. For $\alpha\in \mathfrak{A}$ we set $B_{\alpha}:=\{x\in E\;|\; p_{\alpha}(x)<1\}$ 
and 
\begin{equation}\label{eq1:ext_F_semi_M}
p_{B^{\circ}_{\alpha}}(y):=\sup_{e'\in B^{\circ}_{\alpha}}|y(e')|\leq\infty,\quad y\in E'^{\star}.
\end{equation}
We remark that $p_{\alpha}(x)=p_{B^{\circ}_{\alpha}}(\langle x, \cdot\rangle)$ for all $x\in E$.
We claim that $\mathscr{R}_{f}^{t}\in L(\F_{\kappa}',E)$. 
Indeed, we have  
\begin{equation}\label{eq2:ext_F_semi_M}
p_{B^{\circ}_{\alpha}}\bigl(\mathscr{R}_{f}^{t}(f')\bigr)
=\sup_{e'\in B^{\circ}_{\alpha}}|f'(f_{e'})|
=\sup_{x\in \mathscr{R}_{f}(B^{\circ}_{\alpha})}|f'(x)|
\leq\sup_{x\in K_{\alpha}}|f'(x)|,\quad f'\in \F',
\end{equation}
where $K_{\alpha}:=\overline{\mathscr{R}_{f}(B^{\circ}_{\alpha})}$. 
Due to \prettyref{prop:ext_F_semi_M} the set $\mathscr{R}_{f}(B^{\circ}_{\alpha})$ is absolutely convex 
and relatively compact, implying that $K_{\alpha}$ is absolutely convex and compact in 
$\F$ by \cite[6.2.1 Proposition, p.\ 103]{Jarchow}. 
Further, we have for all $e'\in E'$ and $(m,x)\in U$
\begin{equation}\label{eq3:ext_F_semi_M}
\mathscr{R}_{f}^{t}(T^{\K}_{m,x})(e')=T^{\K}_{m,x}(f_{e'})=(e'\circ f)(m,x)=\langle f(m,x),e'\rangle 
\end{equation}
and thus $\mathscr{R}_{f}^{t}(T^{\K}_{m,x})\in E$. 

First, let condition (i) be satisfied, i.e.\ $E$ be complete, and $f'\in \F'$. 
The span of $\{T^{\K}_{m,x}\;|\; (m,x)\in U\}$ is dense in
$\F_{\kappa}'$ since $U$ is a set of uniqueness for $\F$. Thus there is a net $(f_{\iota}')$ converging to 
$f'$ in $\F_{\kappa}'$ with $\mathscr{R}_{f}^{t}(f_{\iota}')\in E$ by \eqref{eq3:ext_F_semi_M}.
As
\begin{equation}\label{eq4:ext_F_semi_M}
p_{B^{\circ}_{\alpha}}\bigl(\mathscr{R}_{f}^{t}(f_{\iota}')-\mathscr{R}_{f}^{t}(f')\bigr)
 \underset{\eqref{eq2:ext_F_semi_M}}{\leq} \sup_{x\in K_{\alpha}}|(f_{\iota}'-f')(x)|\to 0,
\end{equation}
for all $\alpha\in \mathfrak{A}$, we gain that $(\mathscr{R}_{f}^{t}(f_{\iota}'))$ is a Cauchy net 
in the complete space $E$.
Hence it has a limit $g\in E$ which coincides with $\mathscr{R}_{f}^{t}(f')$ since
\[
p_{B^{\circ}_{\alpha}}\bigl(g-\mathscr{R}_{f}^{t}(f')\bigr)
\underset{\eqref{eq4:ext_F_semi_M}}{\leq} p_{B^{\circ}_{\alpha}}
   \bigl(g-\mathscr{R}_{f}^{t}(f_{\iota}')\bigr)
 + \sup_{x\in K_{\alpha}}\bigl|(f_{\iota}'-f')(x)\bigr|\to 0.
\]
We conclude that $\mathscr{R}_{f}^{t}(f')\in E$ for every $f'\in \F'$. 

Second, let condition (ii) be satisfied and $f'\in \F'$. Then there is 
a sequence $(f_{n}')$ in $\F'$ converging to $f'$ in $\F_{\kappa}'$ such that 
$\mathscr{R}_{f}^{t}(f_{n}')\in E$ for every $n\in\N$. From \eqref{eq2:ext_F_semi_M} we derive 
that $(\mathscr{R}_{f}^{t}(f_{n}'))$ is a Cauchy sequence in the sequentially complete 
space $E$ converging to $\mathscr{R}_{f}^{t}(f')\in E$.

Therefore we obtain in both cases that $\mathscr{R}_{f}^{t}\in L(\F_{\kappa}',E)$ from \eqref{eq2:ext_F_semi_M}.
This implies $\mathscr{R}_{f}^{t}\in L(\F_{\kappa}', E)=\F\varepsilon E$ (as linear spaces).
We set $F:=S(\mathscr{R}_{f}^{t})$ and obtain from consistency that
\[
T^{E}_{m}(F)(x)=T^{E}_{m}S(\mathscr{R}_{f}^{t})(x)
=\mathscr{R}_{f}^{t}(T^{\K}_{m,x})
\underset{\eqref{eq3:ext_F_semi_M}}{=}f(m,x)
\]
for every $(m,x)\in U$, which means $R_{U,E'}(F)=f$.
\end{proof}

If $E$ is complete and $U$ a set of uniqueness for $(T^{\K}_{m},\mathcal{F})_{m\in M}$ with 
$\{T^{\K}_{m,x}\;|\; (m,x)\in U\}=\{\delta_{x}\;|\;x\in\Lambda\}$, $\Lambda\subset\Omega$, then 
we get \cite[0.1, p.\ 217]{Gramsch1977} as a special case. 

Let us consider a concrete example. 
For an open set $\Omega\subset\R^{d}$, an lcHs $E$ and a linear partial differential operator 
$P(\partial)^{E}\colon\mathcal{C}^{\infty}(\Omega,E)\to\mathcal{C}^{\infty}(\Omega,E)$ 
which is hypoelliptic if $E=\K$ we define the spaces of zero solutions 
\[
\mathcal{C}^{\infty}_{P(\partial)}(\Omega,E):=\{f\in\mathcal{C}^{\infty}(\Omega,E)\;|\;f\in\ker P(\partial)^{E}\}
\]
and the space of bounded zero solutions 
\[
\mathcal{C}^{\infty}_{P(\partial),b}(\Omega,E):=\{f\in\mathcal{C}^{\infty}_{P(\partial)}(\Omega,E)\;|\;
\forall\;\alpha\in\mathfrak{A}:\;\|f\|_{\infty,\alpha}:=\sup_{x\in\Omega}p_{\alpha}(f(x))<\infty\}.
\]
Apart from the topology given by $(\|\cdot\|_{\infty,\alpha})_{\alpha\in\mathfrak{A}}$ 
there is another weighted locally convex topology 
on $\mathcal{C}^{\infty}_{P(\partial),b}(\Omega,E)$ which is of interest, namely, the one induced by the seminorms 
\[
|f|_{\nu,\alpha}:=\sup_{x\in\Omega}p_{\alpha}(f(x))|\nu(x)|,\quad f\in\mathcal{C}^{\infty}_{P(\partial),b}(\Omega,E),
\]
for $\nu\in\mathcal{C}_{0}(\Omega)$ and $\alpha\in\mathfrak{A}$ where $\mathcal{C}_{0}(\Omega)$ 
is the space of $\K$-valued continuous functions on $\Omega$ that vanish at infinity.
We denote by $(\mathcal{C}^{\infty}_{P(\partial),b}(\Omega,E),\beta)$ the space 
$\mathcal{C}^{\infty}_{P(\partial),b}(\Omega,E)$ equipped with the topology $\beta$ induced 
by the seminorms $(|\cdot|_{\nu,\alpha})_{\nu\in\mathcal{C}_{0}(\Omega),\alpha\in\mathfrak{A}}$. 
The topology $\beta$ is called the \emph{strict topology}.

\begin{prop}\label{prop:strict_top_B_r}
Let $\Omega\subset\R^{d}$ be open and $P(\partial)^{\K}$ a hypoelliptic linear partial differential operator. 
Then $(\mathcal{C}^{\infty}_{P(\partial),b}(\Omega),\beta)$ is a $B$-complete semi-Montel space. 
If $E$ is a quasi-complete lcHs, then the family 
$((\partial^{\beta})^{E},\partial^{\beta})_{\beta\in\N_{0}^{d},|\beta|\leq m}$ 
is strong and consistent for $((\mathcal{C}^{\infty}_{P(\partial),b}(\Omega),\beta),E)$ for every $m\in\N_{0}$
and $(\mathcal{C}^{\infty}_{P(\partial),b}(\Omega),\beta)\varepsilon E
\cong (\mathcal{C}^{\infty}_{P(\partial),b}(\Omega,E),\beta)$ via $S$.
\end{prop}
\begin{proof}
It is easy to check that $(\mathcal{C}^{\infty}_{P(\partial),b}(\Omega),\|\cdot\|_{\infty})$ 
is a Banach space and that the closed $\|\cdot\|_{\infty}$-unit ball $B_{\|\cdot\|_{\infty}}$ 
is $\tau_{co}$-compact in $\mathcal{C}^{\infty}_{P(\partial),b}(\Omega)$ 
where $\tau_{co}$ denotes the compact-open topology on $\mathcal{C}^{\infty}_{P(\partial),b}(\Omega)$, i.e.\ the 
topology of uniform convergence on compact subsets of $\Omega$.  
Due to \cite[Proposition 3, p.\ 590]{cooper1971}, saying that the topology $\beta$ coincides 
with the mixed topology $\gamma(\tau_{co},\|\cdot\|_{\infty})$ on 
the space $\mathcal{C}_{b}(\Omega)$ of bounded continuous functions on $\Omega$, 
and \cite[Section I.4, 4.6 Proposition, p.\ 44]{cooper1978}, 
saying that this is inherited by subspaces if $B_{\|\cdot\|_{\infty}}$ is $\tau_{co}$-compact, 
we obtain that $\beta=\gamma(\tau_{co},\|\cdot\|_{\infty})$ 
on $\mathcal{C}^{\infty}_{P(\partial),b}(\Omega)$. 
Thus \cite[Section I.1, 1.13 Proposition, p.\ 11]{cooper1978} yields 
that $(\mathcal{C}^{\infty}_{P(\partial),b}(\Omega),\beta)$ is a semi-Montel space. 
From \cite[2.9 Theorem, p.\ 185]{ruess1977} it follows that the space is $B$-complete.

If $E$ is quasi-complete, then $S$ is a topological isomorphism by \cite[3.1 Bemerkung, p.\ 141]{B2}. 
Clearly the family $((\partial^{\beta})^{E},\partial^{\beta})_{\beta\in\N_{0}^{d},|\beta|\leq m}$ is 
strong which means that $e'\circ f\in\mathcal{C}^{\infty}_{P(\partial),b}(\Omega)$ and 
\[
\partial^{\beta}(e'\circ f)=e'\circ (\partial^{\beta})^{E}f, \quad\beta\in\N_{0}^{d},
\]
for all $e'\in E'$ and $f\in\mathcal{C}^{\infty}_{P(\partial),b}(\Omega,E)$. Let us turn to consistency. 
We already know that $S(u)\in \mathcal{C}^{\infty}_{P(\partial),b}(\Omega,E)$ for every 
$u\in(\mathcal{C}^{\infty}_{P(\partial),b}(\Omega),\beta)\varepsilon E$.  
So we only need to prove that 
\[
(\partial^{\beta})^{E}S(u)(x)=u(\delta_{x}\circ\partial^{\beta}),\quad\beta\in\N_{0}^{d},\;x\in\Omega.
\]
This is a consequence of \cite[Proposition 10, p.\ 1520]{kruse2017} once we have shown that
$u\in(\mathcal{C}^{\infty}_{P(\partial)}(\Omega),\tau_{\mathcal{C}^{\infty}})\varepsilon E$ 
where $\tau_{\mathcal{C}^{\infty}}$ is the usual topology of uniform convergence of partial derivatives 
on compact subsets of $\Omega$.
For $\alpha\in\mathfrak{A}$ there are an absolutely convex, compact 
$K\subset(\mathcal{C}^{\infty}_{P(\partial),b}(\Omega),\beta)$ 
and $C>0$ such that for all $f'\in(\mathcal{C}^{\infty}_{P(\partial),b}(\Omega),\beta)'$ it holds that
\begin{equation}\label{eq:strict_top}
p_{\alpha}(u(f'))\leq C\sup_{f\in K}|f'(f)|.
\end{equation}
From the compactness of $K$ in $(\mathcal{C}^{\infty}_{P(\partial),b}(\Omega),\beta)$ follows that 
$K$ is $\|\cdot\|_{\infty}$-bounded and $\tau_{co}$-compact by 
\cite[Proposition 1 (viii), p.\ 586]{cooper1971} since $(\mathcal{C}^{\infty}_{P(\partial),b}(\Omega),\beta)$ 
carries the induced topology of $(\mathcal{C}_{b}(\Omega),\beta)$ and the strict topology $\beta$ 
is the mixed topology $\gamma(\tau_{co},\|\cdot\|_{\infty})$. 
Let $f'\in(\mathcal{C}^{\infty}_{P(\partial)}(\Omega),\tau_{co})'$. Then there are $M\subset\Omega$ compact 
and $C_{0}>0$ such that 
\[
|f'(f)|\leq C_{0}\sup_{x\in M}|f(x)|
\]
for all $f\in \mathcal{C}^{\infty}_{P(\partial)}(\Omega)$.
Choosing a compactly supported cut-off function $\nu\in\mathcal{C}^{\infty}_{c}(\Omega)$ with $\nu=1$ near $M$, 
we obtain 
\[
|f'(f)|\leq C_{0}\sup_{x\in \Omega}|f(x)||\nu(x)|=C_{0}|f|_{\nu}
\] 
for all $f\in \mathcal{C}^{\infty}_{P(\partial)}(\Omega)$. 
Therefore $f'\in(\mathcal{C}^{\infty}_{P(\partial)}(\Omega),\beta)'$. 
In combination with the $\tau_{co}$-compactness of $K$ it follows from \eqref{eq:strict_top} that 
$u\in(\mathcal{C}^{\infty}_{P(\partial)}(\Omega),\tau_{co})\varepsilon E$. Using that 
$\tau_{co}=\tau_{\mathcal{C}^{\infty}}$ on $\mathcal{C}^{\infty}_{P(\partial)}(\Omega)$
by the hypoellipticity of $P(\partial)^{\K}$ (see e.g.\ \cite[p.\ 690]{F/J/W}), we obtain that 
$u\in(\mathcal{C}^{\infty}_{P(\partial)}(\Omega),\tau_{\mathcal{C}^{\infty}})\varepsilon E$. 
\end{proof}

\begin{rem}\label{rem:strict_top_B_r}
Let $\Omega\subset\R^{d}$ be open and $P(\partial)^{\K}$ a hypoelliptic linear partial differential operator. 
Then $(\mathcal{C}^{\infty}_{P(\partial),b}(\Omega),\beta)$ is non-barrelled if $\tau_{co}$ does not coincide with 
the $\|\cdot\|_{\infty}$-topology by \cite[Section I.1, 1.15 Proposition, p.\ 12]{cooper1978}, e.g.\ 
$(\mathcal{C}^{\infty}_{\overline{\partial},b}(\D),\beta)$ is non-barrelled.
\end{rem}

\begin{cor}
Let $\Omega\subset\R^{d}$ be open, $E$ a complete lcHs and $P(\partial)^{\K}$ 
a hypoelliptic linear  partial differential operator. 
Let $(T^{E}_{m},T^{\K}_{m})_{m\in M}$ be a strong, consistent family for 
$((\mathcal{C}^{\infty}_{P(\partial),b}(\Omega),\beta),E)$
and $U$ a set of uniqueness for $(T^{\K}_{m},(\mathcal{C}^{\infty}_{P(\partial),b}(\Omega),\beta))_{m\in M}$. 
If $f\colon U\to E$ is a function such that there is $f_{e'}\in\mathcal{C}^{\infty}_{P(\partial),b}(\Omega)$ 
for each $e'\in E'$ with $T^{\K}_{m}(f_{e'})(x)=(e'\circ f)(m,x)$ for all $(m,x)\in U$, 
then there is a unique $F\in\mathcal{C}^{\infty}_{P(\partial),b}(\Omega,E)$ 
with $T^{E}_{m}(F)(x)=f(m,x)$ for all $(m,x)\in U$.
\end{cor}
\begin{proof}
Due to \prettyref{prop:strict_top_B_r} $(\mathcal{C}^{\infty}_{P(\partial),b}(\Omega),\beta)$ 
is a $B$-complete semi-Montel space and thus a BC-space. 
Moreover, $(\mathcal{C}^{\infty}_{P(\partial),b}(\Omega),\beta)$ and 
$(\mathcal{C}^{\infty}_{P(\partial),b}(\Omega,E),\beta)$ 
are $\varepsilon$-compatible by \prettyref{prop:strict_top_B_r}, 
yielding our statement by \prettyref{thm:ext_F_semi_M} (i) and \prettyref{prop:injectivity}. 
\end{proof}

In particular, for every $m\in\N_{0}$ the family 
$((\partial^{\beta})^{E},\partial^{\beta})_{\beta\in\N_{0}^{d},|\beta|\leq m}$ 
is strong and consistent for $((\mathcal{C}^{\infty}_{P(\partial),b}(\Omega),\beta),E)$ 
by \prettyref{prop:strict_top_B_r}.

Similarly, we may apply \prettyref{thm:ext_F_semi_M} to the space 
$\mathcal{E}^{\{M_{p}\}}(\Omega,E)$ of ultradifferentiable functions of class $\{M_{p}\}$ of Roumieu-type.
$\mathcal{E}^{\{M_{p}\}}(\Omega)$ is a projective limit of a countable sequence of DFS-spaces by 
\cite[Theorem 2.6, p.\ 44]{Kom7} and thus webbed because being webbed is stable under the formation of 
projective and inductive limits of countable sequences by \cite[5.3.3 Corollary, p.\ 92]{Jarchow}. 
Further, if the sequence $(M_{p})_{p\in\N_{0}}$ satisfies Komatsu's conditions (M.1) and (M.3)' 
(see \cite[p.\ 26]{Kom7}), then $\mathcal{E}^{\{M_{p}\}}(\Omega)$ is a Montel space by 
\cite[Theorem 5.12, p.\ 65-66]{Kom7}. 
The spaces $\mathcal{E}^{\{M_{p}\}}(\Omega)$ and $\mathcal{E}^{\{M_{p}\}}(\Omega,E)$ are $\varepsilon$-compatible 
if (M.1) and (M.3)' hold and $E$ is complete by \cite[Example 16 c), p.\ 1526]{kruse2017}. 
Hence \prettyref{thm:ext_F_semi_M} (i) is applicable.

\begin{rem}
We note that \prettyref{rem:R_well-defined} and \prettyref{thm:ext_F_semi_M} still hold 
if the map $S\colon\F\varepsilon E\to\FE$ is only a linear isomorphism into, 
i.e.\ an isomorphism into of linear spaces, since the topological nature of $\varepsilon$-into-compatibility 
is not used in the proof. 
In particular, this means that it can be applied to the space $\mathcal{M}(\Omega,E)$ of meromorphic functions 
on an open, connected set $\Omega\subset\C$ with values in an lcHs $E$ over $\C$ (see \cite[p.\ 356]{Bonet2002}). 
The space $\mathcal{M}(\Omega)$ is a Montel LF-space, thus webbed, by the proof of 
\cite[Theorem 3 (a), p.\ 294-295]{grosse-erdmann1995} if it is equipped with the locally convex topology 
$\tau_{ML}$ given in \cite[p.\ 292]{grosse-erdmann1995}. 
By \cite[Proposition 6, p.\ 357]{Bonet2002} the map $S\colon\mathcal{M}(\Omega)\varepsilon E\to\mathcal{M}(\Omega,E)$ 
is an isomorphism of linear spaces if $E$ is locally complete and does not contain the space $\C^{\N}$. 
Therefore we can apply \prettyref{thm:ext_F_semi_M} if $E$ is complete and does not contain $\C^{\N}$. 
This augments \cite[Theorem 12, p.\ 12]{jorda2005} where $E$ is assumed to be locally complete 
with suprabarrelled strong dual and $(T^{E},T^{\C})=(\id_{E^{\Omega}},\id_{\C^{\Omega}})$.
\end{rem}

\section*{\texorpdfstring{$\F$}{F(Omega)} a Fr\'{e}chet-Schwartz space and \texorpdfstring{$E$}{E} locally complete}

We recall the following abstract extension result. 

\begin{prop}[{\cite[Proposition 7, p.\ 231]{B/F/J}}]\label{prop:ext_FS_set_uni}
Let $E$ be a locally complete lcHs, $Y$ a Fr\'{e}chet-Schwartz space, $X\subset Y_{b}'(=Y_{\kappa}')$ dense 
and $\mathsf{A}\colon X\to E$ linear.
Then the following assertions are equivalent:
\begin{enumerate}
\item [a)] There is a (unique) extension $\widehat{\mathsf{A}}\in Y\varepsilon E$ of $\mathsf{A}$.
\item [b)] $(\mathsf{A}^{t})^{-1}(Y)$ $(=\{e'\in E'\;|\; e'\circ \mathsf{A}\in Y\})$ determines boundedness in $E$.
\end{enumerate}
\end{prop}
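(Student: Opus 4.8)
The plan is to prove the two implications separately; a)$\Rightarrow$b) is routine, and the content is in b)$\Rightarrow$a). Assume a), so that there is $\widehat{\mathsf{A}}\in Y\varepsilon E=L(Y_\kappa',E)$ with $\widehat{\mathsf{A}}_{\mid X}=\mathsf{A}$. For every $e'\in E'$ the functional $e'\circ\widehat{\mathsf{A}}$ is continuous on $Y_\kappa'$, and since $\kappa(Y',Y)$ lies between $\sigma(Y',Y)$ and the Mackey topology, the Mackey--Arens theorem gives $(Y_\kappa')'=Y$; hence $e'\circ\widehat{\mathsf{A}}\in Y$, and as $X$ is dense in $Y_\kappa'$ with $(e'\circ\widehat{\mathsf{A}})_{\mid X}=e'\circ\mathsf{A}$ we obtain $e'\circ\mathsf{A}\in Y$. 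Thus $(\mathsf{A}^t)^{-1}(Y)=E'$, which determines boundedness by Mackey's theorem.

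For b)$\Rightarrow$a) set $G:=(\mathsf{A}^t)^{-1}(Y)$. Since $G$ determines boundedness it is separating (a nonzero $x$ annihilated by $G$ would make $\{nx:n\in\mathbb{N}\}$ $\sigma(E,G)$-bounded but unbounded in $E$), so the canonical injection $\mathcal{J}\colon E\to G^{\ast}$ is available; moreover $X$ is $\sigma(Y',Y)$-dense in $Y'$, so for each $e'\in G$ there is a unique $\mathsf{A}_{e'}\in Y$ with $(\mathsf{A}_{e'})_{\mid X}=e'\circ\mathsf{A}$, which defines a linear map $\mathscr{R}\colon G\to Y$, $e'\mapsto\mathsf{A}_{e'}$, with transpose $\mathscr{R}^{t}\colon Y_\kappa'\to G^{\ast}$, $\mathscr{R}^{t}(\eta)(e'):=\langle\mathscr{R}(e'),\eta\rangle$ (recall $Y_\kappa'=Y_b'$). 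The goal is to show $\mathscr{R}^{t}(Y_\kappa')\subset\mathcal{J}(E)$ and that $\widehat{\mathsf{A}}:=\mathcal{J}^{-1}\circ\mathscr{R}^{t}$ lies in $L(Y_\kappa',E)=Y\varepsilon E$ (as vector spaces); the extension property is then automatic, because for $\xi\in X$ one computes $\mathscr{R}^{t}(\xi)(e')=\langle\mathscr{R}(e'),\xi\rangle=e'(\mathsf{A}\xi)$, i.e.\ $\mathscr{R}^{t}(\xi)=\mathcal{J}(\mathsf{A}\xi)$.

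To carry this out, use that $Y$ is Fr\'{e}chet--Schwartz, so $Y_b'$ is a (DFS)-space: fix a fundamental sequence $(B_n)_n$ of closed absolutely convex bounded sets with $W_n:=(Y_b')_{B_n}$ Banach and compact linking maps $W_n\hookrightarrow W_{n+1}$. For $e'\in G$ the functional $\mathsf{A}_{e'}\in Y=(Y_b')'$ is continuous on $Y_b'$, hence on each $W_n$, so $\mathsf{A}\colon X\cap W_n\to E$ is $\sigma(E,G)$-continuous; therefore $\mathsf{A}(B_n\cap X)$ is $\sigma(E,G)$-bounded, and by b) bounded in $E$, so by local completeness of $E$ it is contained in a closed disk $D_n$ with $E_{D_n}$ a Banach space, and $\mathsf{A}\colon X\cap W_n\to E_{D_n}$ is a bounded operator. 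Now let $\eta\in Y_\kappa'$ and pick a net $(\xi_\tau)$ in $X$ with $\xi_\tau\to\eta$ in $Y_b'$; it is bounded, hence contained in $B_M$ for some $M$, and by compactness of $W_M\hookrightarrow W_{M+1}$ a subnet converges to $\eta$ in the norm of $W_{M+1}$. Along that subnet $(\mathsf{A}\xi_\tau)$ is Cauchy in the Banach space $E_{D_{M+1}}$, hence converges in $E$ to some $x$ with $e'(x)=\langle\mathscr{R}(e'),\eta\rangle$ for all $e'\in G$; as $G$ is separating, $x$ depends only on $\eta$, so $\widehat{\mathsf{A}}(\eta):=x$ defines a linear map with $\mathscr{R}^{t}(\eta)=\mathcal{J}(x)\in\mathcal{J}(E)$. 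Finally $e'\circ\widehat{\mathsf{A}}=\mathscr{R}(e')\in Y=(Y_\sigma')'$ for $e'\in G$ shows that $\widehat{\mathsf{A}}\colon Y_\sigma'\to E_{\sigma(E,G)}$ is continuous, so $\widehat{\mathsf{A}}$ maps bounded subsets of $Y_b'$ to $\sigma(E,G)$-bounded and, by b) once more, to bounded subsets of $E$; since $Y_b'=Y_\kappa'$ is bornological, $\widehat{\mathsf{A}}\in L(Y_\kappa',E)=Y\varepsilon E$.

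The delicate point is producing $\widehat{\mathsf{A}}(\eta)$ as a genuine element of $E$: a $\sigma(G^{\ast},G)$-limit of a bounded net from $\mathcal{J}(E)$ need not return to $\mathcal{J}(E)$ when $E$ is merely locally complete. This is exactly where the Schwartz (rather than just Montel) property enters — compactness of the linking maps of $Y_b'$ replaces the weakly convergent approximating net by one converging in the norm of a single Banach step $W_{M+1}$, after which completeness of the Banach disk $E_{D_{M+1}}$ is enough. Hypothesis b) is used twice, precisely at the two places where $\sigma(E,G)$-boundedness — a consequence of $\mathsf{A}_{e'}\in Y$ — has to be upgraded to boundedness in $E$.
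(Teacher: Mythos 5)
This proposition is not proved in the paper at all: it is imported verbatim from \cite[Proposition 7, p.~231]{B/F/J}, so there is no in-paper proof to compare your argument with. Judged on its own terms, your implication a)$\Rightarrow$b) is correct, and the overall architecture of b)$\Rightarrow$a) (the maps $\mathscr{R}$, $\mathscr{R}^{t}$, $\mathcal{J}$, the Banach disks $D_{n}$ obtained from local completeness, and the final appeal to bornologicity of $Y_{b}'$) is sound and close in spirit to the machinery the paper uses elsewhere (e.g.\ in Theorem \ref{thm:ext_FV_semi_M}).

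There is, however, a genuine gap at the decisive step: ``pick a net $(\xi_{\tau})$ in $X$ with $\xi_{\tau}\to\eta$ in $Y_{b}'$; it is bounded, hence contained in $B_{M}$ for some $M$.'' A convergent \emph{net} in a locally convex space need not be bounded, so this sentence is false as written, and everything downstream (passing to a norm-convergent subnet in $W_{M+1}$, Cauchyness of $(\mathsf{A}\xi_{\tau})$ in $E_{D_{M+1}}$) collapses with it. What you actually need is that every $\eta\in Y'$ is the limit of a \emph{bounded} net from $X$ --- equivalently, that $\eta\in\overline{X\cap\lambda B_{M}}$ for some $\lambda$ and $M$, i.e.\ that the density of $X$ in the (DFS)-space $Y_{b}'$ can be localised to a single Banach step. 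This is not a formality: the hypothesis only gives density, countable inductive limits of Banach spaces are not Fr\'{e}chet--Urysohn in general, and this localisation is precisely where the Schwartz structure of $Y$ has to do real work (via Banach--Dieudonn\'{e}, or a Baire-category argument in the steps $W_{n}$, or by following the actual route of \cite{B/F/J}). As it stands, the hardest point of the implication is asserted rather than proved; you should either supply a proof that dense subspaces of $Y_{b}'$ are boundedly (sequentially) dense, or restructure the argument so that only the weak-$*$ density of $X$ is used.
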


Next, we generalise \cite[Theorem 9, p.\ 232]{B/F/J} using the preceding proposition. The proof 
of the generalisation is simply obtained by replacing the set of uniqueness in the proof of \cite[Theorem 9, p.\ 232]{B/F/J} 
by our more general set of uniqueness. 

\begin{thm}\label{thm:ext_FS_set_uni}
Let $E$ be a locally complete lcHs, $G\subset E'$ determine boundedness and $\F$ and $\FE$ be $\varepsilon$-into-compatible. 
Let $(T^{E}_{m},T^{\K}_{m})_{m\in M}$ be a strong, consistent family for $(\mathcal{F},E)$,
$\F$ a Fr\'{e}chet-Schwartz space and $U$ a set of uniqueness for $(T^{\K}_{m},\mathcal{F})_{m\in M}$. 
Then the restriction map $R_{U,G}\colon S(\F\varepsilon E) \to \mathcal{F}_{G}(U,E)$ is surjective.
\end{thm}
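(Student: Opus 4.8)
The plan is to apply \prettyref{prop:ext_FS_set_uni} with $Y:=\FV$, which is a Fr\'{e}chet-Schwartz space by hypothesis, so that $Y_b'=Y_\kappa'$ holds automatically. The role of the dense subspace $X\subset Y_b'$ will be played by $X:=\operatorname{span}\{T^{\K}_{m,x}\mid (m,x)\in U\}$, which is dense in $\FV_\kappa'$ precisely because $U$ is a set of uniqueness for $\FV$ (as recorded in the remark following the definition of a set of uniqueness). Given $f\in\mathcal{FV}_{G}(U,E)$, the linear map $\mathsf{A}\colon X\to E$ to feed into the proposition is the one determined on the spanning functionals by $\mathsf{A}(T^{\K}_{m,x}):=f(m,x)$; I first need to check this is well-defined, i.e.\ that a linear relation $\sum_k a_k T^{\K}_{(m,x)_k}=0$ in $\FV'$ forces $\sum_k a_k f((m,x)_k)=0$ in $E$. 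This follows since $G$ is separating: for $e'\in G$ pick $f_{e'}\in\FV$ with $T^{\K}_{m}(f_{e'})(x)=e'\circ f(m,x)$ for all $(m,x)\in U$, whence $\sum_k a_k e'(f((m,x)_k))=\sum_k a_k T^{\K}_{(m,x)_k}(f_{e'})=(\sum_k a_k T^{\K}_{(m,x)_k})(f_{e'})=0$, and as $e'\in G$ was arbitrary and $G$ separates points, $\sum_k a_k f((m,x)_k)=0$.

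Next I would verify condition b) of \prettyref{prop:ext_FS_set_uni}, namely that $(\mathsf{A}^t)^{-1}(\FV)=\{e'\in E'\mid e'\circ\mathsf{A}\in\FV\}$ determines boundedness in $E$. The key observation is that $G\subseteq (\mathsf{A}^t)^{-1}(\FV)$: for $e'\in G$ and any $(m,x)\in U$ we have $(e'\circ\mathsf{A})(T^{\K}_{m,x})=e'(f(m,x))=T^{\K}_{m}(f_{e'})(x)=T^{\K}_{m,x}(f_{e'})$, so $e'\circ\mathsf{A}$ and $f_{e'}$ agree on the spanning set of $X$, hence $e'\circ\mathsf{A}=f_{e'}$ as linear functionals on $X$, i.e.\ $e'\circ\mathsf{A}$ extends to the element $f_{e'}\in\FV$ — that is precisely what it means for $e'$ to lie in $(\mathsf{A}^t)^{-1}(\FV)$ (with the mild understanding, as in \cite{B/F/J}, that this set refers to functionals on $X$ extending to $\FV$). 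Since $G$ determines boundedness by hypothesis and any superspace of a boundedness-determining subspace also determines boundedness, $(\mathsf{A}^t)^{-1}(\FV)$ determines boundedness. So a) of the proposition applies: there is a unique $\widehat{\mathsf{A}}\in\FV\varepsilon E$ with $\widehat{\mathsf{A}}(T^{\K}_{m,x})=\mathsf{A}(T^{\K}_{m,x})=f(m,x)$ for every $(m,x)\in U$.

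Finally I would set $F:=S(\widehat{\mathsf{A}})\in\FVE$, which is legitimate since $(T^{E}_{m},T^{\K}_{m})_{m\in\M}$ is consistent and \prettyref{thm:S_iso_into} applies, and then compute using consistency (part (ii) of \prettyref{def.consist}b)) that for every $(m,x)\in U$,
\[
T^{E}_{m}(F)(x)=\bigl(T^{E}_{m}S(\widehat{\mathsf{A}})\bigr)(x)=\widehat{\mathsf{A}}(T^{\K}_{m,x})=f(m,x),
\]
which is exactly the statement $R_{U,G}(F)=f$. Since $F\in S(\FV\varepsilon E)$ this shows $R_{U,G}$ is surjective. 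The one point needing a touch of care — the main (minor) obstacle — is matching the abstract hypothesis "$X\subset Y_b'$ dense, $\mathsf{A}\colon X\to E$ linear" in \prettyref{prop:ext_FS_set_uni} to the present setting: one must confirm that the natural functional $\mathsf{A}$ on the span $X$ is genuinely well-defined (handled above via $G$ separating), and interpret $(\mathsf{A}^t)^{-1}(Y)$ correctly for a map defined only on the dense subspace $X$ rather than all of $Y_b'$, exactly as in the proof of \cite[Theorem 9, p.\ 232]{B/F/J}; beyond this bookkeeping the argument is a direct transcription with the more general set of uniqueness $U$ in place of $\{\delta_x\mid x\in\Lambda\}$.
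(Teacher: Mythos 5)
Your proposal is correct and follows essentially the same route as the paper's proof: choose $X:=\operatorname{span}\{T^{\K}_{m,x}\;|\;(m,x)\in U\}$, $Y:=\FV$, define $\mathsf{A}(T^{\K}_{m,x}):=f(m,x)$ (well-defined since $G$ is separating), show $G\subset(\mathsf{A}^{t})^{-1}(Y)$ so that the latter determines boundedness, apply \prettyref{prop:ext_FS_set_uni}, and recover $F:=S(\widehat{\mathsf{A}})$ via consistency. Your additional care about the well-definedness of $\mathsf{A}$ and the identification $e'\circ\mathsf{A}=f_{e'}$ only spells out details the paper leaves implicit.
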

\begin{proof}
Let $f\in \mathcal{F}_{G}(U,E)$. 
We choose $X:=\operatorname{span}\{T^{\K}_{m,x}\;|\;(m,x)\in U\}$ and $Y:=\F$.
Let $\mathsf{A}\colon X\to E$ be the linear map generated by $\mathsf{A}(T^{\K}_{m,x}):=f(m,x)$. 
The map $\mathsf{A}$ is well-defined since $G$ is $\sigma(E',E)$-dense. 
%proof: well-defined
%Indeed, let $\mathcal{T}\in X$ with 
%\[
%\mathcal{T}=\sum_{k=1}^{q}a_{k}T^{\K}_{m_{k},x_{k}}=\sum_{k=1}^{N}b_{k}T^{\K}_{n_{k},w_{k}}.
%\]
%Then
%\begin{align*}
%e'\bigl(\sum_{k=1}^{q}a_{k}f(m_{k},x_{k})\bigr)
%&=\sum_{k=1}^{q}a_{k}T^{\K}_{m_{k},x_{k}}(f_{e'})
%=\mathcal{T}(f_{e'})
%=\sum_{k=1}^{N}b_{k}T^{\K}_{n_{k},w_{k}}(f_{e'})\\
%&=e'\bigl(\sum_{k=1}^{N}b_{k}f(n_{k},w_{k})\bigr)
%\end{align*}
%for all $e'\in G$. As $G$ is separating, we have 
%\[
%\sum_{k=1}^{q}a_{k}f(m_{k},x_{k})=\sum_{k=1}^{N}b_{k}f(n_{k},w_{k}),
%\]
%which implies 
%\[
%\mathsf{A}(\sum_{k=1}^{q}a_{k}T^{\K}_{m_{k},x_{k}})=\mathsf{A}(\sum_{k=1}^{N}b_{k}T^{\K}_{n_{k},w_{k}})
%\]
%so the definition of $\mathsf{A}$ is independent of the representation of $\mathcal{T}$.
Let $e'\in G$ and $f_{e'}$ be the unique element in $\F$ such that 
$T^{\K}_{m}(f_{e'})(x)=e'\circ \mathsf{A}(T^{\K}_{m,x})$ for all $(m,x)\in U$. This equation 
allows us to consider $f_{e'}$ as a linear form on $X$ 
(by setting $f_{e'}(T^{\K}_{m,x}):=e'\circ \mathsf{A}(T^{\K}_{m,x})$), 
which yields $e'\circ \mathsf{A}\in\F$ for all $e'\in G$. It follows that
$G\subset(\mathsf{A}^{t})^{-1}(Y)$, implying that $(\mathsf{A}^{t})^{-1}(Y)$ determines boundedness. 
Applying \prettyref{prop:ext_FS_set_uni}, there is an extension $\widehat{\mathsf{A}}\in\F\varepsilon E$ of 
$\mathsf{A}$ and we set $F:=S(\widehat{\mathsf{A}})$. We note that 
\[
T^{E}_{m}(F)(x)=T^{E}_{m}S(\widehat{\mathsf{A}})(x)=\widehat{\mathsf{A}}(T^{\K}_{m,x})
=\mathsf{A}(T^{\K}_{m,x})=f(m,x)
\]
for all $(m,x)\in U$ by consistency, yielding $R_{U,G}(F)=f$.
\end{proof}
%\begin{conv}\label{conv:set_uniq_CV_id}
%Let $\Omega\subset\R^{d}$ be open and $E$ an lcHs.
%If $U\subset(\{0\}\times\{0\}\times\Omega)$ is a set of uniqueness for $(\mathcal{F},T^{\K}_{m})_{m\in\N_{0}}$ 
%where $\mathcal{F}(\Omega)$ is a subspace of $\mathcal{CV}^{\infty}(\Omega)$ and $f\colon U\to E$, then 
%we identify $\Omega:=(\{0\}\times\{0\}\times\Omega)$, consider $U$ as subset of $\Omega$ and set $f(x):=f(0,0,x)$, $x\in U$. 
%Based on this identification and $T^{\K}_{0}=\id_{\K^{\{0\}\times\Omega}}$, 
%we call $U\subset\Omega$ a set of uniqueness for $(\mathcal{F},\id_{\K^{\Omega}})$.
%\end{conv}

Let us apply the preceding theorem to our weighted spaces of continuously partially differentiable functions and its subspaces from 
\prettyref{ex:standard_example}.

\begin{cor}\label{cor:weak_strong_CV}
Let $E$ be a locally complete lcHs, $G\subset E'$ determine boundedness,
$\mathcal{V}^{\infty}$ a directed family of weights which is locally bounded away from zero on an open set $\Omega\subset\R^{d}$, 
let $\mathcal{F}(\Omega)$ be a Fr\'{e}chet-Schwartz space and $U\subset\N_{0}^{d}\times\Omega$ a set of uniqueness 
for $(\partial^{\beta},\mathcal{F})_{\beta\in\N_{0}^{d}}$ 
where $\mathcal{F}$ stands for $\mathcal{CV}^{\infty}$, $\mathcal{CV}^{\infty}_{0}$, $\mathcal{CV}^{\infty}_{P(\partial)}$ 
or $\mathcal{CV}^{\infty}_{P(\partial),0}$. 
Then the following holds. 
\begin{enumerate}
\item[a)] If $f\colon U\to E$ is a function such that there is $f_{e'}\in\mathcal{F}(\Omega)$ for each $e'\in G$ 
with $\partial^{\beta}f_{e'}(x)=(e'\circ f)(\beta,x)$ for all $(\beta,x)\in U$, 
then there is a unique $F\in\mathcal{F}(\Omega,E)$ with $(\partial^{\beta})^{E}F(x)=f(\beta,x)$ for all $(\beta,x)\in U$.
\item[b)] If $U\subset\Omega$ and $f\colon U\to E$ is a function such that 
$e'\circ f$ admits an extension $f_{e'}\in\mathcal{F}(\Omega)$ for every $e'\in G$, 
then there is a unique extension $F\in\mathcal{F}(\Omega,E)$ of $f$. 
\item[c)] $\F\varepsilon E\cong \FE$ via $S$ and $\FE=\{f\colon\Omega\to E\;|\;\forall\;e'\in G:\;e'\circ f\in\F \}$.
\end{enumerate}
\end{cor}
\begin{proof}
In all cases $\mathcal{V}^{\infty}$ is locally bounded away from zero and the Fr\'echet space $\mathcal{F}(\Omega)$ is barrelled. 
This implies the consistency of $((\partial^{\beta})^{E},\partial^{\beta})_{\beta\in\N_{0}^{d}}$ 
for $(\mathcal{F},E)$ and the $\varepsilon$-into-compatibility of 
$\F$ and $\FE$ by \prettyref{prop:S_iso_into_standard_example} c) and e).

$\F$ is a Fr\'echet-Schwartz space and $((\partial^{\beta})^{E},\partial^{\beta})_{\beta\in\N_{0}^{d}}$ obviously strong as well, 
which implies that part a) and its special case part b) hold by \prettyref{thm:ext_FS_set_uni} and \prettyref{prop:injectivity}. 
Part c) follows from part b) and \prettyref{prop:weak_strong_principle} since 
$U:=\Omega$ is a set of uniqueness for $(\id_{\K^{\Omega}},\mathcal{F})$. 
\end{proof}

Closed subspaces of Fr\'echet-Schwartz spaces are also Fr\'echet-Schwartz spaces by \cite[Proposition 24.18, p.\ 284]{meisevogt1997}. 
The spaces $\mathcal{CV}^{\infty}_{0}(\Omega)$ and $\mathcal{CV}^{\infty}_{P(\partial),0}(\Omega)$ are closed subspaces of $\mathcal{CV}^{\infty}(\Omega)$ 
and $\mathcal{CV}^{\infty}_{P(\partial)}(\Omega)$, respectively. 
The space $\mathcal{CV}^{\infty}_{P(\partial)}(\Omega)$ is closed in $\mathcal{CV}^{\infty}(\Omega)$ if there is an lcHs $Y$ 
such that $P(\partial)_{\mid\mathcal{CV}^{\infty}(\Omega)}\colon\mathcal{CV}^{\infty}(\Omega)\to Y$ is continuous. For example, this 
is fulfilled if the coefficients of $P(\partial)$ belong to $\mathcal{C}(\Omega)$, 
in particular if $P(\partial):=\Delta$ or $\overline{\partial}$, with $Y:=(\mathcal{C}(\Omega),\tau_{co})$ 
due to $\mathcal{V}^{\infty}$ being locally bounded away from zero.
If $\omega_{m}=M_{m}\times\Omega$ with $M_{m}=\{\beta\in\N_{0}^{d}\;|\;|\beta|\leq m\}$ and $J$ is countable, 
then $\mathcal{CV}^{\infty}(\Omega)$ is a Fr\'echet space by 
\cite[Proposition 3.7, p.\ 240]{kruse2018_2}. Conditions on the weights $\mathcal{V}^{\infty}$ 
which make $\mathcal{CV}^{\infty}(\Omega)$ and its closed subspaces nuclear Fr\'echet spaces, in particular, Fr\'echet-Schwartz spaces 
can be found in \cite[Theorem 3.1, p.\ 188]{kruse2018_4}. For the case $\omega_{m}=\N_{0}^{d}\times\Omega$ 
see the references given in \cite[p.\ 174]{kruse2018_4}.

The preceding corollary can be applied to the Schwartz space $\mathcal{CV}^{\infty}(\R^{d}):=\mathcal{S}(\R^{d})$ and 
improves the $\varepsilon$-compatibility given in 
\cite[Proposition 9, p.\ 108, Th\'{e}or\`{e}me 1, p.\ 111]{Schwartz1955} ($E$ quasi-complete) 
and \cite[Theorem 4.9 a), p.\ 371]{kruse2018_1} ($E$ sequentially complete). 
An application to the Fr\'{e}chet-Schwartz space $\mathcal{CV}^{\infty}(\Omega):=\mathcal{E}^{(M_{p})}(\Omega)$ 
of ultradifferentiable functions of class $(M_{p})$ of Beurling-type (see \cite[Theorem 2.6, p.\ 44]{Kom7}) 
also improves \cite[Theorem 3.10, p.\ 678]{Kom9} since Komatsu's conditions (M.0), (M.1), (M.2)' and (M.3)' (see 
\cite[p.\ 26]{Kom7} and \cite[p.\ 653]{Kom9}) are not needed and the condition that $E$ is sequentially complete is weakened 
to local completeness. 

\begin{rem}\label{rem:set_unique_CV}
Let $\mathcal{V}^{\infty}$ be a directed family of weights which is locally bounded away from zero on an open set $\Omega\subset\R^{d}$. 
\begin{enumerate}
\item[a)] Then any dense set $U\subset\Omega$ is a set of uniqueness for $(\id_{\K^\Omega},\mathcal{F})$ with 
$\mathcal{F}=\mathcal{CV}^{\infty}$, $\mathcal{CV}^{\infty}_{0}$, $\mathcal{CV}^{\infty}_{P(\partial)}$ 
or $\mathcal{CV}^{\infty}_{P(\partial),0}$ due to continuity.
\item[b)] Let $\Omega$ be connected and $x_{0}\in\Omega$. Then $U:=\{(e_{n},x)\;|\;1\leq n\leq d,\,x\in\Omega\}\cup\{(0,x_{0})\}$ 
is a set of uniqueness for $(\partial^{\beta},\mathcal{F})_{\beta\in\N_{0}}$ by the mean value theorem with $\mathcal{F}$ from a).
\item[c)] Let $\K:=\R$, $d:=1$, $\Omega:=(a,b)\subset\R$, $g\colon (a,b)\to \N$ and $x_{0}\in(a,b)$. 
Then $U:=\{(g(x),x)\;|\;x\in(a,b)\}\cup\{(n,x_{0})\;|\;n\in\N_{0}\}$ is a set of uniqueness 
for $(\partial^{\beta},\mathcal{F})_{\beta\in\N_{0}}$ with $\mathcal{F}$ from a). 
Indeed, if $f\in\mathcal{F}(\Omega)$ and $0=\partial^{g(x)}f(x)$ for all $x\in(a,b)$, then $f$ is a polynomial 
by \cite[Chap.\ 11, Theorem, p.\ 53]{donoghue1969}. If, in addition, $0=\partial^{n}f(x_{0})$ for all $n\in\N_{0}$, 
then the polynomial $f$ must vanish on the whole interval $\Omega$.
\item[d)] Let $\Omega\subset\C$ be connected. Then any set $U\subset\Omega$ with an accumulation point in $\Omega$ is a set 
of uniqueness for $(\id_{\C^\Omega},\mathcal{CV}^{\infty}_{\overline{\partial}})$ by the identity theorem for holomorphic functions.
\item[e)] Let $\Omega\subset\C$ be connected and $z_{0}\in\Omega$. Then $U:=\{(n,z_{0})\;|\;n\in\N_{0}\}$ is a set of uniqueness 
for $(\partial^{n}_{\C},\mathcal{CV}^{\infty}_{\overline{\partial}})_{n\in\N_{0}}$ by local power series expansion 
and the identity theorem where $\partial^{n}_{\C}$ denotes the $n$-th complex differential operator, 
which is related to the real partial differential operators by 
\begin{equation}\label{eq:complex_deriv}
 \partial^{\beta}f(z)=i^{\beta_{2}}\partial^{|\beta|}_{\C}f(z),\quad\beta:=(\beta_{1},\beta_{2})\in\N_{0}^{2},\;z\in\Omega
\end{equation}
for all $f\in\mathcal{C}^{\infty}_{\overline{\partial}}(\Omega)$ (see e.g.\ \cite[3.4 Lemma, p.\ 17]{ich}).
\item[f)] Let $\Omega\subset\R^{d}$ be connected. Then any non-empty open set $U\subset\Omega$ is a set of uniqueness for 
$(\id_{\K^\Omega},\mathcal{CV}^{\infty}_{\Delta})$ by the identity theorem for harmonic functions (see e.g.\ \cite[Theorem 5, p.\ 218]{gustin1948}).
\item[g)] Further examples of sets of uniqueness for $(\id_{\K^\Omega},\mathcal{CV}^{\infty}_{\Delta})$ are given in \cite{kokurin2015}.
\end{enumerate}
\end{rem}

In part e) a special case of \prettyref{rem:Schauder_coeff_set_uni} b) is used, namely, 
that $\mathcal{CW}^{\infty}_{\overline{\partial}}(\D_{r}(z_{0}))$ 
has a Schauder basis with associated coefficient functionals $(\delta_{z_{0}}\circ\partial^{n}_{\C})_{n\in\N_{0}}$ 
where $0<r\leq\infty$ is such that $\D_{r}(z_{0})\subset\Omega$.
In order to obtain some sets of uniqueness which are more sensible w.r.t.\ the family of weights $\mathcal{V}^{\infty}$, 
we turn to entire and harmonic functions fulfilling some growth conditions. 
For a family $\mathcal{V}^{\infty}:=(\nu_{j})_{j\in\N}$ of continuous weights on $\R^{d}$, 
a hypoelliptic linear partial differential operator
$P(\partial)$ and an lcHs $E$ we define the weighted space of zero solutions 
\[
\mathcal{AV}^{\infty}_{P(\partial)}(\R^{d},E):=\{f\in\mathcal{C}^{\infty}_{P(\partial)}(\R^{d},E)\;|\;\forall\;j\in\N,\,\alpha\in\mathfrak{A}:\;
|f|_{j,\alpha}^{\sim}<\infty\}
\]
where 
\[
|f|_{j,\alpha}^{\sim}:=\sup_{x\in\R^{d}}p_{\alpha}(f(x))\nu_{j}(x).
\]
If $P(\partial)=\overline{\partial}$, $d=2$ and $\K=\C$, or $P(\partial)=\Delta$ and there is $0\leq\tau<\infty$ such that $\nu_{j}(x)=\exp(-(\tau+\tfrac{1}{j})|x|)$, $x\in\R^{d}$, for all $j\in\N$, 
then $A_{\overline{\partial}}^{\tau}(\C,E):=\mathcal{AV}^{\infty}_{\overline{\partial}}(\C,E)$ is the space of entire and $A_{\Delta}^{\tau}(\R^{d},E):=\mathcal{AV}^{\infty}_{\Delta}(\R^{d},E)$ 
the space of harmonic functions of exponential type $\tau$. If $\tau=0$, then the elements of these spaces are also called functions of infra-exponential type.

\begin{cond}\label{cond:weights}
Let $\mathcal{V}^{\infty}:=(\nu_{j})_{j\in\N}$ be an increasing family of continuous weights on $\R^{d}$. 
Let there be $r\colon\R^{d}\to (0,1]$ and for any $j\in\N$ let there be $\psi_{j}\in L^{1}(\R^{d})$, $\psi_{j}>0$, 
and $I_{m}(j)\geq j$ and $A_{m}(j)>0$ such that for any $x\in\R^{d}$:
\begin{enumerate}
  \item [$(\alpha.1)$] $\sup_{\zeta\in\R^{d},\,\|\zeta\|_{\infty}\leq r(x)}\nu_{j}(x+\zeta)
  \leq A_{1}(j)\inf_{\zeta\in\R^{d},\,\|\zeta\|_{\infty}\leq r(x)}\nu_{I_{1}(j)}(x+\zeta)$
  \item [$(\alpha.2)$] $\nu_{j}(x)\leq A_{2}(j)\psi_{j}(x)\nu_{I_{2}(j)}(x)$
  \item [$(\alpha.3)$] $\nu_{j}(x)\leq A_{3}(j)r(x)\nu_{I_{3}(j)}(x)$
\end{enumerate}
\end{cond}

The preceding condition is a special case of \cite[Condition 2.1, p.\ 176]{kruse2018_4} with $\Omega:=\Omega_{n}:=\R^{d}$ for all $n\in\N$.
If $\mathcal{V}^{\infty}$ fulfils \prettyref{cond:weights} and we set $\mathcal{V}^{\infty,\ast}:=(\nu_{j,m})_{j\in\N,m\in\N_{0}}$ where 
$\nu_{j,m}\colon\{\beta\in\N_{0}^{d}\;|\;|\beta|\leq m\}\times\Omega$, $\nu_{j,m}(\beta,x):=\nu_{j}(x)$, 
then $\mathcal{CV}^{\infty,\ast}(\R^{d})$ and its closed subspace $\mathcal{CV}^{\infty,\ast}_{P(\partial)}(\R^{d})$ 
for $P(\partial)$ with continuous coefficients
are nuclear by \cite[Theorem 3.1, p.\ 188]{kruse2018_4} in combination with 
\cite[Remark 2.7, p.\ 178-179]{kruse2018_4} and Fr\'echet spaces 
by \cite[Proposition 3.7, p.\ 240]{kruse2018_2}. 

\begin{rem}\label{rem:ex_NF_cond_weights}
Let $0\leq \tau<\infty$. Then $\mathcal{V}^{\infty}:=(\nu_{j})_{j\in\N}$ given by 
$\nu_{j}(x):=\exp(-(\tau+\tfrac{1}{j})|x|)$, $x\in\R^{d}$, 
fulfils \prettyref{cond:weights} by \cite[Example 2.8 (iii), p.\ 179]{kruse2018_4}. 
Further examples of families of weights fulfilling \prettyref{cond:weights} 
can be found in \cite[Example 2.8, p.\ 179]{kruse2018_4} and \cite[1.5 Examples, p.\ 205]{meise1987}.
\end{rem}

Now, we can use \prettyref{cor:weak_strong_CV} and these conditions to show that $\mathcal{AV}^{\infty}_{P(\partial)}(\R^{d},E)$ 
coincides as a locally convex space with $\mathcal{CV}^{\infty,\ast}_{P(\partial)}(\R^{d},E)$ if $P(\partial)=\overline{\partial}$ 
or $\Delta$ and $E$ is locally complete, which is used in the next section as well.

\begin{prop}\label{prop:AV_CV_coincide_hol_har}
Let $E$ be a locally complete lcHs. If $\mathcal{V}^{\infty}$ fulfils \prettyref{cond:weights}, then $\mathcal{AV}^{\infty}_{\overline{\partial}}(\C)$ 
and $\mathcal{AV}^{\infty}_{\Delta}(\R^{d})$ are nuclear Fr\'echet spaces and 
$\mathcal{AV}^{\infty}_{\overline{\partial}}(\C,E)=\mathcal{CV}^{\infty,\ast}_{\overline{\partial}}(\C,E)$ and 
$\mathcal{AV}^{\infty}_{\Delta}(\R^{d},E)=\mathcal{CV}^{\infty,\ast}_{\Delta}(\R^{d},E)$ as locally convex spaces.
\end{prop}
\begin{proof}
Let $P(\partial):=\overline{\partial}$ ($d:=2$ and $\K:=\C$) or $P(\partial):=\Delta$. 
First, we show that $\mathcal{AV}^{\infty}_{P(\partial)}(\R^{d})=\mathcal{CV}^{\infty,\ast}_{P(\partial)}(\R^{d})$ 
as locally convex spaces, which implies that 
$\mathcal{AV}^{\infty}_{P(\partial)}(\R^{d})$ is a nuclear Fr\'echet space as $\mathcal{CV}^{\infty,\ast}_{P(\partial)}(\R^{d})$ 
is such a space. 
Let $f\in\mathcal{AV}^{\infty}_{\overline{\partial}}(\C)$, $j\in\N$, $m\in\N_{0}$, $z\in\C$ 
and $\beta:=(\beta_{1},\beta_{2})\in\N_{0}^{2}$. 
Then it follows from $\|\cdot\|_{\infty}\leq |\cdot|$ and Cauchy's inequality that
\begin{align*}
 |\partial^{\beta}f(z)|\nu_{j}(z)&\underset{\mathclap{\eqref{eq:complex_deriv}}}{=}\;|i^{\beta_{2}}\partial^{|\beta|}_{\C}f(z)|\nu_{j}(z)
  \leq \frac{|\beta|!}{r(z)^{|\beta|}}\sup_{|w-z|=r(z)}|f(w)|\nu_{j}(z)\\
 &\underset{\mathclap{(\alpha.3)}}{\leq}\;|\beta|!C(j,|\beta|)\sup_{|w-z|=r(z)}|f(w)|\nu_{B_{3}(j)}(z)\\
 &\underset{\mathclap{(\alpha.1)}}{\leq}\;|\beta|!C(j,|\beta|)A_{1}(B_{3}(j))\sup_{|w-z|=r(z)}|f(w)|\nu_{I_{1}B_{3}(j)}(w)\\
 &\leq |\beta|!C(j,|\beta|)A_{1}(B_{3}(j))|f|_{I_{1}B_{3}(j)}^{\sim}
\end{align*}
where $C(j,|\beta|):=A_{3}(j)A_{3}(I_{3}(j))\cdots A_{3}((B_{3}-1)(j))$ and 
$B_{3}-1$ is the $(|\beta|-1)$-fold composition of $I_{3}$. Choosing $k:=\max_{|\beta|\leq m}I_{1}B_{3}(j)$, 
it follows that 
\[
 |f|_{j,m}\leq\sup_{|\beta|\leq m}|\beta|!C(j,|\beta|)A_{1}(B_{3}(j))|f|_{k}^{\sim}<\infty
\]
and thus $f\in\mathcal{CV}^{\infty,\ast}_{\overline{\partial}}(\C)$ and 
$\mathcal{AV}^{\infty}_{\overline{\partial}}(\C)=\mathcal{CV}^{\infty,\ast}_{\overline{\partial}}(\C)$ as locally convex spaces.
In the case $P(\partial)=\Delta$ an analogous proof works due to Cauchy's inequality for harmonic functions, i.e.\ 
for all $f\in\mathcal{AV}^{\infty}_{\Delta}(\R^{d})$, $j\in\N$, $x\in\R^{d}$ and $\beta\in\N_{0}^{d}$ it holds that
\[
 |\partial^{\beta}f(x)|\nu_{j}(x)\leq \Bigl(\frac{d|\beta|}{r(x)}\Bigr)^{|\beta|}\sup_{|w-x|<r(x)}|f(w)|\nu_{j}(x)
\]
(see e.g.\ \cite[Theorem 2.10, p.\ 23]{gilbarg_trudinger2001}).

The nuclear Fr\'echet space $\mathcal{AV}^{\infty}_{P(\partial)}(\R^{d})$ is a Fr\'echet-Schwartz space and the set
$U:=\Omega$ is a set of uniqueness for $(\id_{\K^{\R^{d}}},\mathcal{AV}^{\infty}_{P(\partial)})$. The pair
$(\id_{E^{\R^{d}}},\id_{\K^{\R^{d}}})$ is a strong, consistent generator for $(\mathcal{AV}^{\infty}_{P(\partial)},E)$.
Indeed, we only need to check condition (i) of \prettyref{def:cons_strong} a) and b), respectively, 
which is satisfied by \prettyref{prop:S_iso_into_standard_example} d) since 
$\mathcal{AV}^{\infty}_{P(\partial)}(\R^{d})=\mathcal{CV}^{\infty,\ast}_{P(\partial)}(\R^{d})$ is barrelled
and $\mathcal{V}^{\infty,\ast}$ locally bounded away from zero. This yields the $\varepsilon$-into-compatibility of 
$\mathcal{AV}^{\infty}_{P(\partial)}(\R^{d})$ and $\mathcal{AV}^{\infty}_{P(\partial)}(\R^{d},E)$ by \prettyref{thm:S_iso_into} as well. 
It follows from \prettyref{thm:ext_FS_set_uni} and \prettyref{prop:weak_strong_principle} that 
$\mathcal{AV}^{\infty}_{P(\partial)}(\R^{d})\varepsilon E\cong\mathcal{AV}^{\infty}_{P(\partial)}(\R^{d},E)$ via $S$. Hence we have
\[
\mathcal{AV}^{\infty}_{P(\partial)}(\R^{d},E)\cong \mathcal{AV}^{\infty}_{P(\partial)}(\R^{d})\varepsilon E
\cong\mathcal{CV}^{\infty,\ast}_{P(\partial)}(\R^{d})\varepsilon E
\cong\mathcal{CV}^{\infty,\ast}_{P(\partial)}(\R^{d},E)
\]
by \prettyref{cor:weak_strong_CV} b) since $\mathcal{AV}^{\infty}_{P(\partial)}(\R^{d})=\mathcal{CV}^{\infty,\ast}_{P(\partial)}(\R^{d})$ 
as locally convex spaces. 
Clearly, the isomorphism $\mathcal{AV}^{\infty}_{P(\partial)}(\R^{d},E)\cong\mathcal{CV}^{\infty,\ast}_{P(\partial)}(\R^{d},E)$ is the identity. 
\end{proof}

Hence we may complement our list in \prettyref{rem:set_unique_CV} by some more examples for spaces of functions 
of exponential type $0\leq\tau<\infty$.

\begin{rem}\label{rem:ex_set_uni_AP}
The following sets $U\subset\C$ are sets of uniqueness for $(\id_{\C^{\C}},A_{\overline{\partial}}^{\tau})$.
\begin{enumerate}
\item[a)] If $\tau<\pi$, then $U:=\N_{0}$ is a set of uniqueness by \cite[9.2.1 Carlson's theorem, p.\ 153]{boas1954}. 
\item[b)] Let $\delta>0$ and $(\lambda_{n})_{n\in\N}\subset(0,\infty)$ such that $\lambda_{n+1}-\lambda_{n}>\delta$ for all $n\in\N$. 
Then $U:=(\lambda_{n})_{n\in\N}$ is a set of uniqueness if $\limsup_{r\to\infty}r^{-2\tau/\pi}\psi(r)=\infty$ where 
$\psi(r):=\exp(\sum_{\lambda_{n}<r}\lambda_{n}^{-1})$, $r>0$, by \cite[9.5.1 Fuchs's theorem, p.\ 157-158]{boas1954}.
\end{enumerate}
The following sets $U$ are sets of uniqueness for $(\partial^{n}_{\C},A_{\overline{\partial}}^{\tau})_{n\in\N_{0}}$.
\begin{enumerate}
\item[c)] Let $(\lambda_{n})_{n\in\N_{0}}\subset\C$ with $|\lambda_{n}|<1$ for all $n\in\N_{0}$. If $\tau<\ln(2)$, 
then $U:=\{(n,\lambda_{n})\;|\;n\in\N_{0}\}$ is a set of uniqueness by \cite[9.11.1 Theorem, p.\ 172]{boas1954}. 
If $\tau<\ln(2+\sqrt{3})$, then $U:=\{(2n+1,0)\;|\;n\in\N_{0}\}\cup\{(2n,\lambda_{n})\;|\;n\in\N_{0}\}$ is 
a set of uniqueness by \cite[9.11.3 Theorem, p.\ 173]{boas1954}.
\item[d)] Let $(\lambda_{n})_{n\in\N_{0}}\subset\C$ with $\limsup_{n\to\infty}n^{-1}\sum_{k=1}^{n}|\lambda_{k}|\leq 1$. 
If $\tau<e^{-1}$, then $U:=\{(n,\lambda_{n})\;|\;n\in\N_{0}\}$ is a set of uniqueness by \cite[9.11.4 Theorem, p.\ 173]{boas1954}.
\end{enumerate}
The following sets $U\subset\R^{d}$ are sets of uniqueness for $(\id_{\R^{\R^{d}}},A_{\Delta}^{\tau})$.
\begin{enumerate}
\item[e)] Let $d:=2$. If there is $k\in\N$ with $\tau<\pi/k$, then $U:=\Z\cup(\Z+ik)$ is a set of uniqueness by \cite[Theorem 1, p.\ 425]{boas1972}.
\item[f)] Let $d:=2$. If $\tau<\pi$ and $\theta\notin\pi\Q$, then $U:=\Z\cup(e^{i\theta}\Z)$ is a set of uniqueness 
by \cite[Theorem 2, p.\ 426]{boas1972}.
\item[g)] If $\tau<\pi$, then $U:=\{0,1\}\times\Z^{d-1}$ is a set of uniqueness by \cite[Corollary 1.8, p.\ 312]{rao1974}. 
\item[h)] If $\tau<\pi$ and $a\in\R$ with $|a|\leq\sqrt{1/(d-1)}$, then $U:=\Z^{d-1}\times\{0,a\}$ is a set of uniqueness by 
\cite[Theorem A, p.\ 335]{zeilberger1976}.
\item[i)] Further examples of sets of uniqueness can be found in \cite{armitage1979}.
\end{enumerate}
The following sets $U$ are sets of uniqueness for $((\partial^{\beta})^{\R},A_{\Delta}^{\tau})_{\beta\in\N_{0}^{d}}$.
\begin{enumerate}
\item[j)] If $\tau<\pi$, then $U:=\{(\beta,(x,0))\;|\;\beta\in\{0,e_{d}\},\,x\in\Z^{d-1}\}$ is 
a set of uniqueness by \cite[Theorem B, p.\ 335]{zeilberger1976}. Further examples can be found in \cite{armitage1979}.
\end{enumerate}
\end{rem}

We close this section by an examination of the space
\begin{align*}
\mathcal{E}_{0}(E):=\{f\in\mathcal{C}^{\infty}((0,1),E)\;|\;\forall\;k\in\N_{0}:\;&(\partial^{k})^{E}f\;
\text{cont.\ extendable on}\;[0,1]\\
&\text{and}\;(\partial^{k})^{E}f(1)=0\}
\end{align*}
where $(\partial^{k})^{E}f(1):=\lim_{x\nearrow 1}(\partial^{k})^{E}f(x)$ and which we equip with the system of seminorms given by
\[
|f|_{m,\alpha}:=\sup_{\substack{x\in(0,1)\\k\in\N_{0},k\leq m}}p_{\alpha}\bigl((\partial^{k})^{E}f(x)\bigr),\quad f\in \mathcal{E}_{0}(E),
\]
for $m\in\N_{0}$ and $\alpha\in\mathfrak{A}$. We need the following weak-strong principle in our last section.

\begin{cor}\label{cor:weak_strong_E_0}
Let $E$ be a locally complete lcHs and $G\subset E'$ determine boundedness. Then
$\mathcal{E}_{0}\varepsilon E\cong\mathcal{E}_{0}(E)$ via $S$ and 
$\mathcal{E}_{0}(E)=\{f\colon(0,1)\to E\;|\;\forall\;e'\in G:\;e'\circ f\in\mathcal{E}_{0} \}$.
\end{cor}
\begin{proof}
Analogously to the proof of \cite[Example 20, p.\ 1529]{kruse2017} we may deduce that 
$((\partial^{k})^{E},\partial^{k})_{k\in\N_{0}}$ is a strong, consistent generator for $(\mathcal{E}_{0},E)$ 
since $\mathcal{E}_{0}$ is a Fr\'echet-Schwartz space by \cite[Example 28.9 (5), p.\ 350]{meisevogt1997}, in particular, barrelled. 
Therefore $\mathcal{E}_{0}$ and $\mathcal{E}_{0}(E)$ are $\varepsilon$-into-compatible by \prettyref{thm:S_iso_into} 
and we derive our statement from \prettyref{thm:ext_FS_set_uni} and \prettyref{prop:weak_strong_principle} with $U:=(0,1)$.
\end{proof}
\section{Extension of locally bounded functions}
In order to obtain an affirmative answer to \prettyref{que:surj_restr_set_unique} 
for general separating subspaces of $E'$ we have to restrict to the spaces $\FV$ from \prettyref{def:standard_space} 
and a certain class of sets of uniqueness.

\begin{defn}[{fix the topology}]\label{def:fix_top_1}
Let $\FV$ be a $\dom$-space. $U\subset\bigcup_{m\in M}\{m\}\times\omega_{m}$ \emph{fixes the topology} in $\FV$ if 
for every $j\in J$ and $m\in M $ there are $i\in J$, $k\in M$ and $C>0$ such that 
\[
|f|_{j,m}\leq C \sup_{\substack{x\in\omega_{k}\\(k,x)\in U}}|T^{\K}_{k}(f)(x)|\nu_{i,k}(x),\quad
f\in \FV .
\]
\end{defn}

In particular, $U$ is a set of uniqueness if it fixes the topology. The present definition of fixing 
the topology is a generalisation of \cite[Definition 13, p.\ 234]{B/F/J}. 
Sets that fix the topology appear under several different notions. 
Rubel and Shields call them \emph{dominating} in \cite[4.10 Definition, p.\ 254]{rubelshields1966} 
in the context of bounded holomorphic functions. 
In the context of the space of holomorphic functions with the compact-open topology studied by Grosse-Erdmann 
\cite[p.\ 401]{grosse-erdmann2004} 
they are said to \emph{determine locally uniform convergence}. 
Ehrenpreis \cite[p.\ 3,4,13]{ehrenpreis1970} (cf.\ \cite[Definition 3.2, p.\ 166]{schneider1974}) 
refers to them as \emph{sufficient sets}
when he considers inductive limits of weighted spaces of entire resp.\ holomorphic functions, including the case of Banach spaces. 
In the case of Banach spaces sufficient sets coincide with \emph{weakly sufficient sets} defined by Schneider 
\cite[Definition 2.1, p.\ 163]{schneider1974} 
(see e.g.\ \cite[\S7, 1), p.\ 547]{korobenik1987}) and these notions are extended beyond spaces of holomorphic functions 
by Korobe{\u{\i}}nik \cite[p.\ 531]{korobenik1987}.
Seip \cite[p.\ 93]{seip1992b} uses the term \emph{sampling sets} in the context of weighted Banach spaces 
of holomorphic functions whereas Beurling uses the 
term \emph{balayage} in \cite[p.\ 341]{beurling1989} and \cite[Definition, p.\ 343]{beurling1989}.
Leibowitz \cite[Exercise 4.1.4, p.\ 53]{leibowitz1970}, Stout \cite[7.1 Definition, p.\ 36]{stout1971} 
and Globevnik \cite[p.\ 291-292]{globevnik1979} 
call them \emph{boundaries} in the context of subalgebras of the algebra $\mathcal{C}(\Omega,\C)$ of complex-valued continuous functions 
on a compact Hausdorff space $\Omega$ with sup-norm. 
Fixing the topology is also connected to the notion of \emph{frames} used by Bonet et al.\ in \cite{bonet2017}. Let us set
\[
\ell\mathcal{V}(U,E):=\{f\colon U\to E\;|\;\forall\;j\in J,m\in M,\alpha\in\mathfrak{A}:\;\|f\|_{j,m,\alpha}<\infty\}
\]
with
\[
\|f\|_{j,m,\alpha}:=\sup_{\substack{x\in\omega_{m}\\(m,x)\in U}}p_{\alpha}(f(m,x))\nu_{j,m}(x)
\]
for an lcHs $E$ and a set $U$ which fixes the topology in $\FV$. If $U$ is countable, 
the inclusion $\ell\mathcal{V}(U)\hookrightarrow\K^{U}$ continuous where 
$\K^{U}$ is equipped with the topology of pointwise convergence and $\ell\mathcal{V}(U)$ contains the space of sequences (on $U$)
with compact support as a linear subspace, then $(T^{\K}_{k,x})_{(k,x)\in U}$ is an $\ell\mathcal{V}(U)$-frame in the sense of 
\cite[Definition 2.1, p.\ 3]{bonet2017}.

\begin{defn}[{$lb$-restriction space}]
Let $\FV$ be a $\dom$-space, $U$ fix the topology in $\FV$ and $G\subset E'$ a separating subspace. We set 
\[
N_{U,i,k}(f):=\{f(k,x)\nu_{i,k}(x)\;|\;x\in\omega_{k},\,(k,x)\in U\}
\]
for $i\in J$, $k\in M$ and $f\in\mathcal{FV}_{G}(U,E)$ and 
\begin{align*}
\mathcal{FV}_{G}(U,E)_{lb}:=&\{f\in\mathcal{FV}_{G}(U,E)\;|\;\forall\;i\in J,\,k\in M:\;N_{U,i,k}(f)\;\text{bounded in}\; E\}\\
=&\mathcal{FV}_{G}(U,E)\cap\ell\mathcal{V}(U,E).
\end{align*}
\end{defn}

Consider a set $U$ which fixes the topology in $\FV$, a separating subspace $G\subset E'$ and 
a strong, consistent family $(T^{E}_{m},T^{\K}_{m})_{m\in M}$ for $(\mathcal{FV},E)$.
For $u\in \FV\varepsilon E$ set $f:=S(u)\in\FVE$ by \prettyref{thm:S_iso_into}. Then 
we have $R_{U,G}(f)\in \mathcal{FV}_{G}(U,E)$ with $f:=S(u)$ 
by \prettyref{rem:R_well-defined} and for $i\in J$ and $k\in M$
\[
\sup_{y\in N_{U,i,k}(R_{U,G}(f))}p_{\alpha}(y)
=\sup_{\substack{x\in\omega_{k}\\(k,x)\in U}}p_{\alpha}(T^{E}_{k}(f)(x))\nu_{i,k}(x)
\leq |f|_{i,k,\alpha}<\infty
\]
for all $\alpha\in\mathfrak{A}$, implying the boundedness of $N_{U,i,k}(R_{U,G}(f))$ in $E$. 
Thus $R_{U,G}(f)\in\mathcal{FV}_{G}(U,E)_{lb}$ and the injective linear map
\[
R_{U,G}\colon S(\FV\varepsilon E)\to \mathcal{FV}_{G}(U,E)_{lb},\;f\mapsto (T^{E}_{m}(f)(x))_{(m,x)\in U},
\]
is well-defined. 

\begin{que}
Let $G\subset E'$ be a separating subspace, 
$(T^{E}_{m},T^{\K}_{m})_{m\in M}$ a strong, consistent generator for $(\mathcal{FV},E)$
and $U$ fix the topology in $\FV$.
When is the injective restriction map 
\[
R_{U,G}\colon S(\FV\varepsilon E)\to \mathcal{FV}_{G}(U,E)_{lb},\;f\mapsto (T^{E}_{m}(f)(x))_{(m,x)\in U},
\]
surjective?
\end{que}

If $G\subset E'$ determines boundedness and $U$ fixes the topology in $\FV$, then the preceding question and \prettyref{que:surj_restr_set_unique} 
coincide.

\begin{rem}\label{rem:rest_spaces_coincide}
Let $G\subset E'$ determine boundedness, 
$(T^{E}_{m},T^{\K}_{m})_{m\in M}$ a strong, consistent generator for $(\mathcal{FV},E)$
and $U$ fix the topology in $\FV$. Then 
\[
\mathcal{FV}_{G}(U,E)_{lb}=\mathcal{FV}_{G}(U,E).
\]
\end{rem}
\begin{proof}
We only need to show that the inclusion $\supset$ holds. Let $f\in\mathcal{FV}_{G}(U,E)$. Then there is $f_{e'}\in\FV$ with 
$T^{\K}_{m}(f_{e'})(x)=(e'\circ f)(m,x)$ for all $(m,x)\in U$ and 
\[
\sup_{y\in N_{U,i,k}(f)}|e'(y)|=\sup_{\substack{x\in\omega_{k}\\(k,x)\in U}}|(e'\circ f)(k,x)|\nu_{i,k}(x)\leq |f_{e'}|_{i,k}<\infty
\]
for each $e'\in G$, $i\in J$ and $k\in M$. Since $G\subset E'$ determines boundedness, this means that $N_{U,i,k}(f)$ is bounded in $E$ 
and hence $f\in\mathcal{FV}_{G}(U,E)_{lb}$.
\end{proof}

\section*{\texorpdfstring{$\FV$}{FV(Omega)} arbitrary and \texorpdfstring{$E$}{E} a semi-Montel space}

\begin{defn}[{generalised Schwartz space}]
We call an lcHs $E$ a generalised Schwartz space if every bounded set in $E$ is already precompact.
\end{defn}

In particular, semi-Montel spaces and Schwartz spaces are generalised Schwartz spaces by \cite[10.4.3 Corollary, p.\ 202]{Jarchow}. 
Conversely, a generalised Schwartz space is a Schwartz space if it is quasi-normable 
by \cite[10.7.3 Corollary, p.\ 215]{Jarchow}.

\begin{prop}\label{prop:ext_E_semi_M}
Let $E$ be an lcHs,
$\FV$ a $\dom$-space and $U$ fix the topology in $\FV$.
Then $\mathscr{R}_{f}\in L(E_{b}',\FV)$ 
and $\mathscr{R}_{f}(B_{\alpha}^{\circ})$ is bounded in $\FV$  
for every $f\in\mathcal{FV}_{E'}(U,E)_{lb}$ and $\alpha\in\mathfrak{A}$ 
where $B_{\alpha}:=\{x\in E\;|\;p_{\alpha}(x)<1\}$ and $\mathscr{R}_{f}$ 
is the map from \prettyref{rem:R_f}. In addition, if $E$ is a generalised Schwartz space, 
then $\mathscr{R}_{f}\in L(E_{\tau_{pc}}',\FV)$ 
and $\mathscr{R}_{f}(B_{\alpha}^{\circ})$ is relatively compact in $\FV$.
\end{prop}
\begin{proof}
Let $f\in \mathcal{FV}_{E'}(U,E)_{lb}$, $j\in J$ and $m\in M$. Then there are $i\in J$, $k\in M$ and $C>0$ such that 
for every $e'\in E'$
\begin{align*}
|\mathscr{R}_{f}(e')|_{j,m}&=|f_{e'}|_{j,m}
\leq C\sup_{\substack{x\in\omega_{k}\\(k,x)\in U}}|T^{\K}_{k}(f_{e'})(x)|\nu_{i,k}(x)\\
&=C \sup_{\substack{x\in\omega_{k}\\(k,x)\in U}}|(e'\circ f)(k,x)|\nu_{i,k}(x)
=C\sup_{y\in N_{U,i,k}(f)}|e'(y)|,
\end{align*}
which proves the first part because $N_{U,i,k}(f)$ is bounded in $E$. Let us consider the second part.
The bounded set $N_{U,i,k}(f)$ is already precompact in $E$ because $E$ is a generalised Schwartz space. 
Therefore we have $\mathscr{R}_{f}\in L(E_{\tau_{pc}}',\FV)$. 
%Therefore, if we consider $G$ with topology induced by $E_{\kappa}'$, 
%then the map $\mathscr{R}_{f}\colon G\to \FV$ is continuous. From the completeness 
%of $\FV$ and the density of $S$ in $E_{\kappa}'$ follows that there is a 
%unique extension $\widehat{\mathscr{R}}_{f}\in L(E_{\kappa}',\FV)$ of 
%$\mathscr{R}_{f}$. 
The polar $B_{\alpha}^{\circ}$ is relatively compact 
in $E_{\tau_{pc}}'$ for every $\alpha\in\mathfrak{A}$ by the Alao\u{g}lu-Bourbaki theorem 
and thus $\mathscr{R}_{f}(B_{\alpha}^{\circ})$ in $\FV$ as well.
\end{proof}

\begin{thm}\label{thm:fix_topo_E_semi_M}
Let $E$ be a semi-Montel space, 
$(T^{E}_{m},T^{\K}_{m})_{m\in M}$ a strong, consistent generator for $(\mathcal{FV},E)$
and $U$ fix the topology in $\FV$. 
Then the restriction map $R_{U,E'}\colon S(\FV\varepsilon E)\to \mathcal{FV}_{E'}(U,E)_{lb}$ is surjective.
\end{thm}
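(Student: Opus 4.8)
The plan is to mimic the proof of \prettyref{thm:ext_FV_semi_M}, but replacing the semi-Montelness of $\FV$ (which was used to get compactness of $\mathscr{R}_f(B_\alpha^\circ)$ and to run a closed graph argument) by the semi-Montelness of $E$, which now directly yields via \prettyref{prop:ext_E_semi_M} that $\mathscr{R}_f\in L(E_\gamma',\FV)$ and that $\mathscr{R}_f(B_\alpha^\circ)$ is relatively compact in $\FV$. Crucially, a semi-Montel space is quasi-complete, hence locally complete; and here one does not even need $E$ to be complete, because the target $\mathcal{J}(E)$ of the transposed map will be complete enough thanks to $E$ being semi-Montel (bounded closed sets are compact, in particular complete). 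So the role of ``$E$ complete'' in \prettyref{thm:ext_FV_semi_M} is taken over by ``$E$ semi-Montel''.

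Concretely, let $f\in\mathcal{FV}_{E'}(U,E)_{lb}$ and consider the transpose $\mathscr{R}_f^t\colon\FV'\to (E')^\ast$, $\mathscr{R}_f^t(y)(e'):=y(f_{e'})$, together with the canonical injection $\mathcal{J}\colon E\to(E')^\ast$ and the seminorms $p_{B_\alpha^\circ}$ on $\mathcal{J}(E)$ as in \eqref{eq1:ext_FV_semi_M}. First I would show $\mathscr{R}_f^t\in L(\FV_\kappa',\mathcal{J}(E))$: for $y\in\FV'$ one has
\[
p_{B_\alpha^\circ}\bigl(\mathscr{R}_f^t(y)\bigr)=\sup_{e'\in B_\alpha^\circ}|y(f_{e'})|=\sup_{x\in\mathscr{R}_f(B_\alpha^\circ)}|y(x)|\leq\sup_{x\in K_\alpha}|y(x)|,
\]
where $K_\alpha:=\overline{\operatorname{acx}}(\mathscr{R}_f(B_\alpha^\circ))$ is absolutely convex and compact in $\FV$ by \prettyref{prop:ext_E_semi_M} and \cite[6.2.1 Proposition, p.\ 103]{Jarchow}. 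This shows continuity from $\FV_\kappa'$; moreover, $\mathscr{R}_f^t(T^\K_{m,x})(e')=T^\K_{m,x}(f_{e'})=e'\circ f(m,x)=\mathcal{J}(f(m,x))(e')$, so $\mathscr{R}_f^t$ maps each $T^\K_{m,x}$ into $\mathcal{J}(E)$. Since $U$ fixes the topology, in particular it is a set of uniqueness, so $\operatorname{span}\{T^\K_{m,x}\mid(m,x)\in U\}$ is dense in $\FV_\kappa'$; approximating an arbitrary $f'\in\FV'$ by a net $f'_\tau$ of such combinations, the estimate above shows $(\mathscr{R}_f^t(f'_\tau))_\tau$ is Cauchy in $\mathcal{J}(E)$ with respect to the seminorms $p_{B_\alpha^\circ}$, and all these vectors lie in $\mathcal{J}(K_\alpha')$ for a suitable bounded set; here is where semi-Montelness of $E$ supplies completeness: $\mathcal{J}(\overline B)$ is $p_{B_\alpha^\circ}$-complete for bounded $B$ because closed bounded subsets of $E$ are compact. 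Hence the limit lies in $\mathcal{J}(E)$ and equals $\mathscr{R}_f^t(f')$, giving $\mathscr{R}_f^t\in L(\FV_\kappa',\mathcal{J}(E))$.

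From $p_\alpha((\mathcal{J}^{-1}\circ\mathscr{R}_f^t)(y))=p_{B_\alpha^\circ}(\mathscr{R}_f^t(y))\leq\sup_{x\in K_\alpha}|y(x)|$ for all $y\in\FV'$ we get $\mathcal{J}^{-1}\circ\mathscr{R}_f^t\in L(\FV_\kappa',E)=\FV\varepsilon E$. Setting $F:=S(\mathcal{J}^{-1}\circ\mathscr{R}_f^t)$ and using consistency,
\[
T^E_m(F)(x)=T^E_m S(\mathcal{J}^{-1}\circ\mathscr{R}_f^t)(x)=\mathcal{J}^{-1}\bigl(\mathscr{R}_f^t(T^\K_{m,x})\bigr)=\mathcal{J}^{-1}\bigl(\mathcal{J}(f(m,x))\bigr)=f(m,x)
\]
for every $(m,x)\in U$, i.e.\ $R_{U,E'}(F)=f$. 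This proves surjectivity.

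The main obstacle I anticipate is handling the completeness issue correctly now that $E$ is only assumed semi-Montel rather than complete: one must verify that the Cauchy net $(\mathscr{R}_f^t(f'_\tau))_\tau$ actually converges \emph{inside} $\mathcal{J}(E)$. The right way is to observe that for fixed $\alpha$ the net, suitably rescaled, stays in $\mathcal{J}$ of a fixed bounded set of $E$ (because $\mathscr{R}_f^t$ is continuous into $\mathcal{J}(E)$ and $K_\alpha$-bounded sets of $\FV'$ are $p_{B_\alpha^\circ}$-bounded), and that the closure of such a bounded set is compact and hence complete in $E$; transporting via the homeomorphism $\mathcal{J}$ (onto its image, with the $p_{B_\alpha^\circ}$-topology matching the original topology by \eqref{eq1:ext_FV_semi_M}) then furnishes the limit. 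A secondary point worth a line is that $U$ fixing the topology is used both to supply density of $\operatorname{span}\{T^\K_{m,x}\}$ in $\FV_\kappa'$ and implicitly to ensure $\mathcal{FV}_{E'}(U,E)_{lb}$ is the correct domain, but this is exactly parallel to the set-of-uniqueness bookkeeping in \prettyref{thm:ext_FV_semi_M} and needs no new idea.
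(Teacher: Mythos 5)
Your strategy --- rerun the proof of \prettyref{thm:ext_FV_semi_M}, drawing the compactness of $K_{\alpha}=\overline{\mathscr{R}_{f}(B_{\alpha}^{\circ})}$ from \prettyref{prop:ext_E_semi_M} instead of from semi-Montelness of $\FV$ --- is sound up to and including the estimate $p_{B_{\alpha}^{\circ}}(\mathscr{R}_{f}^{t}(y))\leq\sup_{x\in K_{\alpha}}|y(x)|$, and the final consistency computation is correct. The gap is exactly at the point you flag yourself: convergence of the Cauchy net $(\mathscr{R}_{f}^{t}(f_{\tau}'))_{\tau}$ \emph{inside} $\mathcal{J}(E)$. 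Semi-Montelness only gives quasi-completeness of $E$ (and not completeness: an infinite-dimensional reflexive Banach space with its weak topology is semi-Montel but not complete), i.e.\ only Cauchy nets eventually contained in a single bounded set are guaranteed to converge. Your estimate yields, for each fixed $\alpha$, an index $\tau_{\alpha}$ with $p_{\alpha}\bigl(\mathcal{J}^{-1}\mathscr{R}_{f}^{t}(f_{\tau}')\bigr)\leq\sup_{x\in K_{\alpha}}|f'(x)|+1$ for $\tau\geq\tau_{\alpha}$; since $\tau_{\alpha}$ depends on $\alpha$ and the $\tau_{\alpha}$ need not admit a common upper bound, no tail of the net is exhibited inside one bounded set, and ``rescaling for fixed $\alpha$'' does not repair this. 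What your argument actually shows is that $\mathscr{R}_{f}^{t}(f')$ lies in the completion of $E$, which is strictly weaker than the claim.

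The paper closes this hole by a different, pointwise argument that never invokes completeness or density of $\operatorname{span}\{T^{\K}_{m,x}\;|\;(m,x)\in U\}$ in $\FV_{\kappa}'$ for this step: for fixed $f'\in\FV'$ one estimates
\[
|\mathscr{R}_{f}^{t}(f')(e')|=|f'(f_{e'})|\leq C_{0}|f_{e'}|_{j,l}\leq C_{0}C\sup_{y\in \oacx(N_{U,i,k}(f))}|e'(y)|,
\]
using that $U$ fixes the topology (via the estimate in the proof of \prettyref{prop:ext_E_semi_M}) and that $N_{U,i,k}(f)$ is bounded --- this is precisely where the $lb$-condition enters --- hence has compact closed absolutely convex hull because $E$ is semi-Montel and quasi-complete. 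Therefore $\mathscr{R}_{f}^{t}(f')$ is $\kappa(E',E)$-continuous and belongs to $(E_{\kappa}')'=\mathcal{J}(E)$ by the Mackey--Arens theorem. Note that this is the estimate dual to the one you wrote: you bound the functional uniformly over $e'\in B_{\alpha}^{\circ}$ in terms of $f'$ on a compact subset of $\FV$, whereas membership in $\mathcal{J}(E)$ requires bounding it in terms of $e'$ on a compact subset of $E$. Once $\mathscr{R}_{f}^{t}(f')\in\mathcal{J}(E)$ is secured this way, your remaining steps (the $\varepsilon$-product membership via your estimate and the consistency computation) go through verbatim.
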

\begin{proof}
Let $f\in \mathcal{FV}_{E'}(\Omega,E)_{lb}$ and $e'\in E'$. 
%By Proposition [REF] there is a unique extension 
%$\widehat{\mathscr{R}}_{f}\in L(E_{\kappa}',\FV)=E\varepsilon\FV$ (as vector spaces) of $\mathscr{R}_{f}\colon G\to \FV$. 
%Due to \cite[Satz 10.3 a), p.\ 234]{Kaballo} we have $\widehat{\mathscr{R}}_{f}^{t}\in\FV\varepsilon E$. We 
%set $F:=S(\widehat{\mathscr{R}}_{f}^{t})$ and observe that
For every $f'\in \FV'$ there are $j\in J$, $m\in M$ and $C_{0}>0$ with
\[
|\mathscr{R}_{f}^{t}(f')(e')|=|f'(f_{e'})|\leq C_{0} |f_{e'}|_{j,m}.
\]
By the proof of \prettyref{prop:ext_E_semi_M} there are $i\in J$, $k\in M$ and $C>0$ such that
\[
|\mathscr{R}_{f}^{t}(f')(e')|\leq C_{0}C\sup_{y\in N_{U,i,k}(f)}|e'(y)|\leq C_{0}C\sup_{y\in \oacx(N_{U,i,k}(f))}|e'(y)|.
\]
The set $\oacx(N_{U,i,k}(f))$ is absolutely convex and compact by \cite[6.2.1 Proposition, p.\ 103]{Jarchow} 
and \cite[6.7.1 Proposition, p.\ 112]{Jarchow} because $E$ is semi-Montel. 
Therefore $\mathscr{R}_{f}^{t}(f')\in (E'_{\kappa})'=\mathcal{J}(E)$ by the Mackey-Arens theorem.
Like in \prettyref{thm:ext_F_semi_M} we obtain $\mathcal{J}^{-1}\circ\mathscr{R}_{f}^{t}\in \FV\varepsilon E$ 
by \eqref{eq1:ext_F_semi_M}, \eqref{eq2:ext_F_semi_M} and \prettyref{prop:ext_E_semi_M}. 
Setting $F:=S(\mathcal{J}^{-1}\circ\mathscr{R}_{f}^{t})$ 
we conclude $T^{E}_{m}(F)(x)=f(m,x)$ for all $(m,x)\in U$ by \eqref{eq3:ext_F_semi_M} and 
so $R_{U,E'}(F)=f$.
\end{proof}

We denote by $\mathcal{C}_{bu}(\Omega,E)$ the space
of bounded uniformly continuous functions from a metric space $\Omega$ to an lcHs $E$ which we endow
with the system of seminorms given by
\[
|f|_{\alpha}:=\sup_{x\in\Omega}p_{\alpha}(f(x)),\quad f\in\mathcal{C}_{bu}(\Omega,E),
\]
for $\alpha\in\mathfrak{A}$.

\begin{cor}\label{cor:uniformly_cont}
Let $\Omega$ be a metric space, $U\subset\Omega$ a dense subset and $E$ a semi-Montel space.
If $f\colon U\to E$ is a function such that $e'\circ f$ admits an extension $f_{e'}\in\mathcal{C}_{bu}(\Omega)$ for each $e'\in E'$, 
then there is a unique extension $F\in\mathcal{C}_{bu}(\Omega,E)$ of $f$. In particular, 
\[
\mathcal{C}_{bu}(\Omega,E)=\{f\colon\Omega\to E\;|\;\forall\;e'\in E':\;e'\circ f\in\mathcal{C}_{bu}(\Omega)\}.
\]
\end{cor}
\begin{proof}
$(\id_{E^{\Omega}},\id_{\K^{\Omega}})$ is a strong, consistent generator for $(\mathcal{C}_{bu},E)$ and 
$\mathcal{C}_{bu}(\Omega)\varepsilon E\cong \mathcal{C}_{bu}(\Omega,E)$ via $S$ by \cite[5.8 Example, p.\ 27]{kruse2017a}. 
Due to \prettyref{thm:fix_topo_E_semi_M}, \prettyref{prop:injectivity} and \prettyref{rem:rest_spaces_coincide} with $G=E'$
the extension $F$ exists and is unique because the dense set $U\subset\Omega$ fixes the topology in $\mathcal{C}_{bu}(\Omega)$. 
The rest follows from \prettyref{prop:weak_strong_principle}. 
\end{proof}

Let $\Omega\subset\C$ be open and bounded and $E$ an lcHs over $\C$. 
We denote by $\mathcal{A}(\overline{\Omega},E)$ the space of continuous functions 
from $\overline{\Omega}$ to an lcHs $E$ which are holomorphic on $\Omega$
and equip $\mathcal{A}(\overline{\Omega},E)$ with the system of seminorms given by
\[
|f|_{\alpha}:=\sup_{x\in\overline{\Omega}}p_{\alpha}(f(x)),\quad f\in\mathcal{A}(\overline{\Omega},E),
\]
for $\alpha\in\mathfrak{A}$.

\begin{cor}\label{cor:disc_algebra}
Let $\Omega\subset\C$ be open and bounded, $U\subset\overline{\Omega}$ fix the topology in $\mathcal{A}(\overline{\Omega})$ 
and $E$ a semi-Montel space over $\C$. If $f\colon U\to E$ is a function such that $e'\circ f$ admits an extension 
$f_{e'}\in\mathcal{A}(\overline{\Omega})$ for each $e'\in E'$, 
then there is a unique extension $F\in\mathcal{A}(\overline{\Omega},E)$ of $f$. In particular, 
\[
\mathcal{A}(\overline{\Omega},E)=\{f\colon\overline{\Omega}\to E\;|\;\forall\;e'\in E':\;e'\circ f\in\mathcal{A}(\overline{\Omega})\}.
\]
\end{cor}
\begin{proof}
$(\id_{E^{\overline{\Omega}}},\id_{\C^{\overline{\Omega}}})$ is a strong, consistent generator for $(\mathcal{A},E)$ and 
$\mathcal{A}(\overline{\Omega})\varepsilon E\cong \mathcal{A}(\overline{\Omega},E)$ via $S$ by \cite[3.1 Bemerkung, p.\ 141]{B2}. 
Due to \prettyref{thm:fix_topo_E_semi_M}, \prettyref{prop:injectivity} and \prettyref{rem:rest_spaces_coincide} with $G=E'$
the extension $F$ exists and is unique. The remaining part follows from \prettyref{prop:weak_strong_principle}. 
\end{proof}

If $\Omega\subset\C$ is connected, then the boundary $\partial\Omega$ of $\Omega$ fixes the topology in $\mathcal{A}(\overline{\Omega})$ 
by the maximum principle. If $\Omega=\D$, then $\partial\D$ is the intersection of all sets 
that fix the topology in $\mathcal{A}(\overline{\D})$ by \cite[7.7 Example, p.\ 39]{stout1971}. 

If $E$ is a generalised Schwartz space which is not a semi-Montel space, 
we do not know whether the extension results in \prettyref{cor:uniformly_cont} and \prettyref{cor:disc_algebra}
hold but we still have a weak-strong principle due to the following observation which is based on 
\cite[Chap.\ 3, \S9, Proposition 2, p.\ 231]{horvath} with $\sigma(E,E')$ replaced by $\sigma(E,G)$. 

\begin{prop}\label{prop:general_Schwartz}
If
\begin{enumerate}
\item[(i)] $E$ is a semi-Montel space and $G\subset E'$ a separating subspace, or if 
\item[(ii)] $E$ is a generalised Schwartz space and $G\subset \widehat{E}'$ a separating subspace, 
i.e.\ separates the points of the completion $\widehat{E}$,
\end{enumerate}
then the initial topology of $E$ and the topology $\sigma(E,G)$ coincide on the bounded sets of $E$.
\end{prop}
\begin{proof}
(i) Let $B\subset E$ be a bounded set. If $E$ is a semi-Montel space, then the closure $\overline{B}$ is compact 
in $E$. The topology induced by $\sigma(E,G)$ on $\overline{B}$ is Hausdorff and weaker than 
the initial topology induced by $E$. Thus the two topologies coincide on $\overline{B}$ and so on $B$ by 
the remarks above \cite[Chap.\ 3, \S9, Proposition 2, p.\ 231]{horvath}.

(ii) Let $B\subset E$ be a bounded set. If $E$ is a generalised Schwartz space, then $B$ is precompact in $E$ 
and relatively compact in the completion $\widehat{E}$ by \cite[3.5.1 Theorem, p.\ 64]{Jarchow}. 
Hence the closure $\overline{B}$ is compact in $\widehat{E}$. 
The topology induced by $\sigma(\widehat{E},G)$ on $\overline{B}$ is Hausdorff 
and weaker than the initial topology induced by $\widehat{E}$, implying
that the two topologies coincide on $\overline{B}$ as in part (i).
This yields that $\sigma(E,G)$ and the initial topology of $E$ coincide on $B$ because 
$\sigma(E,G)=\sigma(\widehat{E},G)$ on $B$ and the initial topologies of $E$ and $\widehat{E}$ 
coincide on $B$ as well.
\end{proof}

We apply this observation to the space $\mathcal{A}(\overline{\Omega},E)$.

\begin{rem}
Let $E$ be an lcHs over $\C$ and $\Omega\subset\C$ open and bounded. If
\begin{enumerate}
\item[(i)] $E$ is a semi-Montel space and $G\subset E'$ determines boundedness, or if 
\item[(ii)] $E$ is a generalised Schwartz space and $G\subset\widehat{E}'$ a separating subspace which 
determines boundedness in $E$, 
\end{enumerate}
then 
\[
 \mathcal{A}(\overline{\Omega},E)
=\{f\colon\overline{\Omega}\to E\;|\;\forall\;e'\in G:\;e'\circ f\in\mathcal{A}(\overline{\Omega})\}.
\]
Indeed, let us denote the right-hand side by $\mathcal{A}(\overline{\Omega},E)_{\sigma}$ 
and set $E_{\sigma}:=(E,\sigma(E,G))$. 
Then $\mathcal{A}(\overline{\Omega},E)_{\sigma}=\mathcal{A}(\overline{\Omega},E_{\sigma})$ 
and $f(\overline{\Omega})$ is bounded for every $f\in\mathcal{A}(\overline{\Omega},E)_{\sigma}$ 
as $G$ determines boundedness in $E$. 
The initial topology of $E$ and $\sigma(E,G)$ coincide on the bounded range $f(\overline{\Omega})$ 
of $f\in\mathcal{A}(\overline{\Omega},E)_{\sigma}$ by \prettyref{prop:general_Schwartz}. Hence we deduce that
\[
\mathcal{A}(\overline{\Omega},E)_{\sigma}=\mathcal{A}(\overline{\Omega},E_{\sigma})=\mathcal{A}(\overline{\Omega},E).
\]
\end{rem}

In this way Bierstedt proves his weak-strong principles for weighted continuous functions in \cite[2.10 Lemma, p.\ 140]{B2} with $G=E'=\widehat{E}'$.

\section*{\texorpdfstring{$\FV$}{FV(Omega)} a Fr\'{e}chet-Schwartz space and \texorpdfstring{$E$}{E} locally complete}

\begin{defn}[{\cite[Definition 12, p.\ 8]{B/F/J}}]\label{def:fix_top_2}
 Let $Y$ be a Fr\'{e}chet space. An increasing sequence $(B_{n})_{n\in\N}$ of bounded subsets of $Y_{b}'$ \emph{fixes 
 the topology} in $Y$ if $(B_{n}^{\circ})_{n\in\N}$ is a fundamental system of zero neighbourhoods of $Y$.
\end{defn}

\begin{rem}\label{rem:ex_almost_norming}
Let $Y$ be a Banach space. If $B\subset Y_{b}'$ is bounded, i.e.\ bounded w.r.t.\ the operator norm, 
such that $B$ fixes the topology in $Y$, i.e.\ $B^{\circ}$ is bounded in $Y$, then $B$ is called an almost norming subset.
Examples of almost norming subspaces are given in \cite[Remark 1.2, p.\ 780-781]{Arendt2000}. 
For instance, the set of point evaluations $B:=\{\delta_{1/n}\;|\; n\in\N\}$ is almost norming for
the Hardy space $Y:=H^{\infty}(\D):=\mathcal{C}^{\infty}_{\overline{\partial},b}(\D)$.
\end{rem}

\begin{defn}[{chain-structured}]\label{def:chain_structured} 
Let $\FV$ be a $\dom$-space. We say that $U\subset\bigcup_{m\in \N}\{m\}\times\omega_{m}$ is \emph{chain-structured} if 
\begin{enumerate}
\item[(i)] $(k,x)\in U\;\;\Rightarrow\;\;\forall\;m\in \N,\,m\geq k:\;(m,x)\in U$,
\item[(ii)] $\forall\;(k,x)\in U,\,m\geq k,\,f\in\FV:\;T^{\K}_{k}(f)(x)=T^{\K}_{m}(f)(x)$.
\end{enumerate}
\end{defn}

\begin{rem} 
Let $\Omega\subset\R^{d}$ be open and $\mathcal{V}^{\infty}$ be a directed family of weights. 
Concerning the operators $(T^{\K}_{m})_{m\in\N_{0}}$ 
of $\mathcal{CV}^{\infty}(\Omega)$ from \prettyref{ex:standard_example} 
where $\omega_{m}=\{\beta\in\N_{0}^{d}\;|\;|\beta|\leq m\}\times\Omega$ resp.\ 
$\omega_{m}=\N_{0}^{d}\times\Omega$, we have for all $k\in\N_{0}$ and $f\in\mathcal{CV}^{\infty}(\Omega)$ that
\[
T^{\K}_{k}(f)(\beta,x)=\partial^{\beta}f(x)=T^{\K}_{m}(f)(\beta,x),\quad \beta\in\N_{0}^{d},\;|\beta|\leq k,\;x\in\Omega,
\]
for all $m\in\N_{0}$, $m\geq k$. Hence condition (ii) of \prettyref{def:chain_structured} is fulfilled for any 
$U\subset\bigcup_{m\in \N_{0}}\{m\}\times\omega_{m}$ in this case. 
Condition (i) says that once a {\glq link\grq} $(k,\beta,x)$ belongs to $U$ for some 
order $k$, then the {\glq link\grq} $(m,\beta,x)$ belongs to $U$ for any higher order $m$ as well. 
\end{rem}

\begin{defn}[{diagonally dominated, increasing}] 
We say that a family $\mathcal{V}:=(\nu_{j,m})_{j,m\in\N}$ of weights on $\Omega$ 
is \emph{diagonally dominated} and \emph{increasing} if $\omega_{m}\subset \omega_{m+1}$ for all $m\in \N$ and 
$\nu_{j,m}\leq \nu_{\max(j,m),\max(j,m)}$ on $\omega_{\min(j,m)}$ for all $j,m\in\N$ as well as 
$\nu_{j,j}\leq \nu_{j+1,j+1}$ on $\omega_{j}$ for all $j\in\N$. 
\end{defn}

\begin{rem}\label{rem:fix_top_1=2}
Let $\FV$ be a $\dom$-space, $U\subset\bigcup_{m\in \N}\{m\}\times\omega_{m}$ chain-structured, $G\subset E'$ a separating subspace
and $\mathcal{V}$ diagonally dominated and increasing.  
\begin{enumerate}
 \item [a)] If $U$ fixes the topology in $\FV$, then 
 \[
 \mathcal{FV}_{G}(U,E)_{lb}=\{f\in\mathcal{FV}_{G}(U,E)\;|\;\forall\;i\in \N:\;N_{U,i}(f)\;\text{bounded in}\; E\}
 \]
 with $N_{U,i}(f):=N_{U,i,i}(f)$.
 \item [b)] Let $\FV$ be a Fr\'{e}chet space. We set $U_{m}:= \{(m,x)\in U\;|\; x\in \omega_{m}\}$ and 
 $B_{j}:=\bigcup_{m=1}^{j}\{T^{\K}_{m,x}(\cdot)\nu_{m,m}(x)\;|\; (m,x)\in U_{m}\}\subset \FV'$ for $j\in\N$. 
 Then $U$ fixes the topology in $\FV$ in the sense of \prettyref{def:fix_top_1} if and only if 
 the sequence $(B_{j})_{j\in\N}$ fixes the topology in $\FV$ in the sense of \prettyref{def:fix_top_2}. 
\end{enumerate}
\end{rem}
\begin{proof}
Let us begin with a). We only need to show that the inclusion '$\supset$` holds. Let $f$ be an element of the right-hand side and 
$i,k\in\N$. We set $m:=\max(i,k)$ and observe that for $(k,x)\in U$ we have $(m,x)\in U$ by (i) and
\[
(e'\circ f)(k,x)=T^{\K}_{k}(f_{e'})(x)\underset{\text{(ii)}}{=}T^{\K}_{m}(f_{e'})(x)=(e'\circ f)(m,x)
\]
for each $e'\in G$ with (i) and (ii) from the definition of $U$ being chain-structured. Since $G$ is separating, it follows that 
$f(k,x)=f(m,x)$. Hence we get for all $\alpha\in\mathfrak{A}$
\begin{align*}
\sup_{y\in N_{U,i,k}(f)}p_{\alpha}(y)&=\sup_{\substack{x\in\omega_{k}\\(k,x)\in U}}p_{\alpha}(f(k,x))\nu_{i,k}(x)
\underset{\text{(i)}}{\leq} \sup_{\substack{x\in\omega_{m}\\(m,x)\in U}}p_{\alpha}(f(k,x))\nu_{m,m}(x)\\
&= \sup_{\substack{x\in\omega_{m}\\(m,x)\in U}}p_{\alpha}(f(m,x))\nu_{m,m}(x)<\infty
\end{align*}
using that $\omega_{k}\subset\omega_{m}$ and $\mathcal{V}$ is diagonally dominated.

Let us turn to part b). '$\Rightarrow$`: Let $j\in\N$ and $A\subset \FV$ be bounded. Then 
\[
\sup_{y\in B_{j}}\sup_{f\in A}|y(f)|=\sup_{\substack{1\leq m\leq j\\(m,x)\in U_{m}}}\sup_{f\in A}|T^{\K}_{m}(f)(x)|\nu_{m,m}(x)
\leq\sup_{f\in A}\sup_{1\leq m\leq j}|f|_{m,m}<\infty
\]
since $A$ is bounded, implying that $B_{j}$ is bounded in $\FV_{b}'$. Further, $(B_{j})$ is increasing by definition. 
Additionally, for all $j\in\N$
\begin{align*}
B_{j}^{\circ}&=\bigcap_{m=1}^{j}\{f\in\FV\;|\;\sup_{\substack{x\in\omega_{m}\\(m,x)\in U}}|T^{\K}_{m}(f)(x)|\nu_{m,m}(x)\leq 1\}\\
&=\{f\in\FV\;|\;\sup_{\substack{x\in\omega_{j}\\(j,x)\in U}}|T^{\K}_{j}(f)(x)|\nu_{j,j}(x)\leq 1\}
\end{align*}
because $U$ is chain-structured and $\mathcal{V}$ increasing. 
Thus $(B_{j}^{\circ})$ is a fundamental system of zero neighbourhoods of $\FV$ if $U$ fixes the topology.\\
'$\Leftarrow$`: Let $j,m\in\N$. Then there are $i\in\N$ and $\varepsilon>0$ such that 
\[
\varepsilon B_{i}^{\circ}\subset \{f\in\FV\;|\;|f|_{j,m}\leq 1\}=:D_{j,m},
\]
which follows from fixing the topology in the sense of \prettyref{def:fix_top_2}.
Let $f\in D_{j,m}$ and set
\[
|f|_{U_{i}}:=\sup_{(i,x)\in U_{i}}|T^{\K}_{i}(f)(x)|\nu_{i,i}(x).
\]
If $|f|_{U_{i}}= 0$, then $tf\in\varepsilon B_{i}^{\circ}$ for all $t>0$ and 
hence $t|f|_{j,m}=|tf|_{j,m}\leq 1$ for all $t>0$, which yields 
$|f|_{j,m}=0=|f|_{U_{i}}$.
If $|f|_{U_{i}}\neq 0$, then $\tfrac{f}{|f|_{U_{i}}}\in B_{i}^{\circ}$ and thus $\varepsilon\tfrac{f}{|f|_{U_{i}}}\in D_{j,m}$, implying
\[
|f|_{j,m}=\frac{1}{\varepsilon}|f|_{U_{i}}\bigl|\varepsilon\frac{f}{|f|_{U_{i}}}\bigr|_{j,m}\leq \frac{1}{\varepsilon}|f|_{U_{i}}.
\]
The inequality $|f|_{j,m}\leq\tfrac{1}{\varepsilon}|f|_{U_{i}}$ still holds if $|f|_{U_{i}}= 0$.
\end{proof}

\begin{thm}[{\cite[Theorem 16, p.\ 236]{B/F/J}}]\label{thm:fix_top}
 Let $Y$ be a Fr\'{e}chet-Schwartz space, $(B_{j})_{j\in\N}$ fix the topology in $Y$ and 
 $\mathsf{A}\colon X:=\operatorname{span}(\bigcup_{j\in\N} B_{j})\to E$ be a linear map which is bounded on each $B_{j}$. If
 \begin{enumerate}
  \item [a)] $(\mathsf{A}^{t})^{-1}(Y)$ is dense in $E_{b}'$ and $E$ locally complete, or if 
  \item [b)] $(\mathsf{A}^{t})^{-1}(Y)$ is dense in $E_{\sigma}'$ and $E$ is $B_{r}$-complete,
 \end{enumerate}
 then $\mathsf{A}$ has a (unique) extension $\widehat{\mathsf{A}}\in Y\varepsilon E$.
\end{thm}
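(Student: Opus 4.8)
The plan is to reduce \prettyref{thm:fix_top} to the abstract extension theorems already available by exploiting that $Y$ is a Fr\'{e}chet-Schwartz space. Recall that for such $Y$ the topology is already determined by the sequence of bounded sets $(B_j)_{j\in\N}$ in the sense of \prettyref{def:fix_top_2}, so the polars $(B_j^{\circ})_{j\in\N}$ form a fundamental system of zero neighbourhoods and the normed spaces $Y_{B_j^{\circ}}'$ (the span of $B_j^{\circ\circ}$ in $Y'$, or equivalently the Banach space associated with the disk $\overline{\operatorname{acx}}(B_j)$ in $Y$) are the relevant local Banach spaces. The first step is to observe that each $B_j$ is a compact subset of the Fr\'{e}chet-Schwartz space $Y$ (its absolutely convex closed hull $\overline{\operatorname{acx}}(B_j)$ is compact, since bounded sets in a Fr\'{e}chet-Schwartz space are relatively compact and closed absolutely convex hulls of compact sets in a complete space are compact by \cite[6.7.1 Proposition, p.\ 112]{Jarchow}). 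Hence, setting $Z_j$ to be the Banach space $Y_{\overline{\operatorname{acx}}(B_j)}$ with unit ball $\overline{\operatorname{acx}}(B_j)$, each $Z_j$ has a closed unit ball which is compact in $Y$, which is exactly the hypothesis of \prettyref{prop:ext_B_set_uni}.

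Next I would show that $\mathsf{A}$, being bounded on each $B_j$, factors appropriately through these pieces. The key estimate: boundedness of $\mathsf{A}$ on $B_j$ means $\mathsf{A}(B_j)$ is bounded in $E$, hence (if $E$ is locally complete) contained in $\lambda B$ for some closed disk $B$, so the restriction $\mathsf{A}_{\mid \operatorname{span} B_j}$ is continuous from the normed space $(\operatorname{span} B_j, \|\cdot\|_{B_j})$ into $E_B$. Combined with the density hypothesis on $(\mathsf{A}^t)^{-1}(Y)$, one wants to apply \prettyref{prop:ext_FS_set_uni} in case a), since $Y$ Fr\'{e}chet-Schwartz is in particular Fr\'{e}chet and $X = \operatorname{span}(\bigcup_j B_j)$ is dense in $Y_b' = Y_\kappa'$ (density of $X$ follows from the fixing-topology hypothesis: if $y \in Y$ is annihilated by all of $X$, then $|y|$ vanishes under each seminorm built from $B_j$, forcing $y = 0$ by the bipolar theorem, so $X$ is $\sigma(Y',Y)$-dense, hence $\kappa$-dense). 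Then condition b) of \prettyref{prop:ext_FS_set_uni} is precisely that $(\mathsf{A}^t)^{-1}(Y)$ determines boundedness; and here is where the two cases split. In case a) the hypothesis is only that $(\mathsf{A}^t)^{-1}(Y)$ is $\sigma(E',E)$-strongly-dense (dense in $E_b'$), which is weaker than determining boundedness in general, so \prettyref{prop:ext_FS_set_uni} does not apply directly and one must instead invoke \prettyref{prop:ext_B_set_uni} with $Z := Z_j$ for a suitable $j$.

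So the real argument, I expect, runs through \prettyref{prop:ext_B_set_uni}: use the fixing-topology property to pick, for the given continuity data, an index $j$ so that $\mathsf{A}$ restricted to $\operatorname{span}(B_j)$ with its $Z_j$-norm is $\sigma(X,Z_j)$-$\sigma(E,G)$-continuous for $G := (\mathsf{A}^t)^{-1}(Y)$ — density of $G$ in $E_\sigma'$ (case b) or $E_b'$ (case a) makes $G$ separating — and apply \prettyref{prop:ext_B_set_uni} to get an extension $\widehat{\mathsf{A}}_j \in Y\varepsilon E$ bounded on $B_j^{\circ Y'}$. The subtlety is that \prettyref{prop:ext_B_set_uni} is stated with $G$ \emph{determining boundedness}, not merely dense; in case b) the $B_r$-completeness of $E$ is what upgrades $\sigma$-density to something usable (via a closed graph / Pt\'{a}k-type argument, as in the proof of \cite[Theorem 16]{B/F/J}), while in case a) local completeness plus strong density of $(\mathsf{A}^t)^{-1}(Y)$ does the job by a Grothendieck-style boundedness argument. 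Finally, one must check the local extensions $\widehat{\mathsf{A}}_j$ are compatible (they agree on $X$, which is dense in $Y_\kappa'$, so they coincide) and that the resulting operator is genuinely in $Y\varepsilon E = L_e(Y_\kappa',E)$; continuity on equicontinuous sets follows because equicontinuous subsets of $Y'$ sit inside some $B_j^{\circ\circ}$-multiple by the fixing-topology hypothesis, and $\widehat{\mathsf{A}}$ is bounded there. I expect the \textbf{main obstacle} to be exactly the passage from density of $(\mathsf{A}^t)^{-1}(Y)$ (in the weak$^\star$ or strong topology, depending on the case) to the boundedness-determining condition needed to invoke the operator extension propositions — this is where the distinction between local completeness (case a) and $B_r$-completeness (case b) of $E$ becomes essential, and handling both uniformly requires care in choosing which of \prettyref{prop:ext_FS_set_uni}, \prettyref{prop:ext_B_set_uni} to apply.
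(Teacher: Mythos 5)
The paper does not actually prove \prettyref{thm:fix_top}: it is quoted verbatim from \cite[Theorem 16, p.\ 236]{B/F/J} and used as a black box, so there is no internal proof to compare against. Judged on its own merits, your proposal has two genuine gaps. The first is structural: by \prettyref{def:fix_top_2} the sets $B_j$ are bounded subsets of the \emph{dual} $Y_b'$, not of $Y$, so the claim that ``each $B_j$ is a compact subset of the Fr\'{e}chet-Schwartz space $Y$'' is not meaningful, and the Banach spaces $Z_j:=Y_{\overline{\operatorname{acx}}(B_j)}$ ``with closed unit ball compact in $Y$'' that you want to feed into \prettyref{prop:ext_B_set_uni} do not exist as described. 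In \prettyref{prop:ext_B_set_uni} the space $Z$ must consist of elements of $Y$, so that $X\subset Y'$ can act on it and $B_Z^{\circ Y'}$ makes sense; what $(B_j)$ gives you here is the dual picture, namely a representation of $Y$ as a reduced projective limit of the local Banach spaces $Y_{B_j^{\circ}}$, and none of the operator-extension propositions recalled in the paper is formulated for that situation.

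Second, and more importantly, the actual content of the theorem --- that mere \emph{density} of $(\mathsf{A}^{t})^{-1}(Y)$ in $E_b'$ (with $E$ locally complete) or in $E_\sigma'$ (with $E$ $B_r$-complete) already yields the extension --- is exactly the step you do not supply. You correctly observe that both \prettyref{prop:ext_FS_set_uni} and \prettyref{prop:ext_B_set_uni} require the stronger hypothesis that the relevant subspace of $E'$ determines boundedness, but you then resolve the discrepancy by appealing to ``a Grothendieck-style boundedness argument'' and explicitly to ``the proof of \cite[Theorem 16]{B/F/J}'', which is the statement being proved; that is circular. A genuine proof has to exploit the boundedness of $\mathsf{A}$ on each $B_j$ together with the Schwartz property of $Y$ (compact linking maps between the local Banach spaces $Y_{B_{j+1}^{\circ}}\to Y_{B_j^{\circ}}$) to get, e.g.\ via a closed-graph or Banach-disk argument applied to $\mathsf{A}^{t}\colon (\mathsf{A}^{t})^{-1}(Y)\to Y$, the continuity needed to dualise back; none of that machinery appears in your outline. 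The peripheral steps you do carry out (density of $X$ in $Y_\kappa'=Y_b'$ via the bipolar theorem, uniqueness of the extension) are fine, but the core of the argument is missing.
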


Now, we generalise \cite[Theorem 17, p.\ 237]{B/F/J}.

\begin{thm}\label{thm:ext_FS_fix_top}
 Let $E$ be an lcHs, $G\subset E'$ a separating subspace, 
 $(T^{E}_{m},T^{\K}_{m})_{m\in M}$ be a strong, consistent generator for $(\mathcal{FV},E)$, 
 $\FV$ a Fr\'{e}chet-Schwartz space, $\mathcal{V}$ diagonally dominated and increasing and
 $U$ be chain-structured and fix the topology in $\FV$.
 If
 \begin{enumerate}
  \item [a)] $G$ is dense in $E_{b}'$ and $E$ locally complete, or if 
  \item [b)] $E$ is $B_{r}$-complete,
 \end{enumerate}
 then the restriction map $R_{U,G}\colon S(\FV\varepsilon E)\to \mathcal{FV}_{G}(U,E)_{lb}$ is surjective.
\end{thm}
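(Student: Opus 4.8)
The plan is to reduce Theorem~\ref{thm:ext_FS_fix_top} to the abstract operator extension result Theorem~\ref{thm:fix_top}, exactly as Theorem~\ref{thm:ext_FS_set_uni} was reduced to \prettyref{prop:ext_FS_set_uni}, but now keeping track of the bounded sets $B_{j}$ that fix the topology. First I would fix $f\in\mathcal{FV}_{G}(U,E)_{lb}$ and set $Y:=\FV$, which is a Fr\'{e}chet--Schwartz space by hypothesis. With $U_{j}$ and $B_{j}$ as in \prettyref{rem:fix_top_1=2} b), the sequence $(B_{j})_{j\in\N}$ fixes the topology in $Y$ in the sense of \prettyref{def:fix_top_2} (this is exactly the content of that remark, using that $\mathcal{V}$ is diagonally dominated and increasing). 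Put $X:=\operatorname{span}(\bigcup_{j\in\N}B_{j})=\operatorname{span}\{T^{\K}_{m,x}\;|\;(m,x)\in U\}$ and define $\mathsf{A}\colon X\to E$ as the linear map generated by $\mathsf{A}(T^{\K}_{m,x}):=f(m,x)$ for $(m,x)\in U$; this is well-defined because $G$ is $\sigma(E',E)$-dense (separating), by the same argument as in the proof of Theorem~\ref{thm:ext_FS_set_uni}.

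Next I would verify the two hypotheses of Theorem~\ref{thm:fix_top}. For the boundedness condition: on $B_{j}$ we have $\mathsf{A}(T^{\K}_{m,x}(\cdot)\nu_{j,j,m}(x))=\nu_{j,j,m}(x)f(m,x)$, and since $f\in\mathcal{FV}_{G}(U,E)_{lb}$, the set $N_{U,j}(f)=N_{U,j,j}(f)=\{\nu_{j,j,m}(x)f(m,x)\;|\;(m,x)\in U_{j}\}$ is bounded in $E$ (here I use the reformulation of the $lb$-condition for diagonally dominated increasing $\mathcal{V}$ from \prettyref{rem:fix_top_1=2} a)); hence $\mathsf{A}(B_{j})=N_{U,j}(f)$ is bounded, i.e.\ $\mathsf{A}$ is bounded on each $B_{j}$. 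For the density condition: as in Theorem~\ref{thm:ext_FS_set_uni}, for $e'\in G$ the defining element $f_{e'}\in\FV$ with $T^{\K}_{m}(f_{e'})(x)=e'\circ f(m,x)=e'\circ\mathsf{A}(T^{\K}_{m,x})$ for all $(m,x)\in U$ shows $e'\circ\mathsf{A}=(f_{e'})_{\mid X}\in\FV=Y$, so $G\subset(\mathsf{A}^{t})^{-1}(Y)$. In case a) $G$ is dense in $E_{b}'$ and $E$ is locally complete, so $(\mathsf{A}^{t})^{-1}(Y)$ is dense in $E_{b}'$; in case b) $G$ is separating, hence dense in $E_{\sigma}'$ by the bipolar theorem, so $(\mathsf{A}^{t})^{-1}(Y)$ is dense in $E_{\sigma}'$, and $E$ is $B_{r}$-complete.

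Thus Theorem~\ref{thm:fix_top} applies and yields a (unique) extension $\widehat{\mathsf{A}}\in\FV\varepsilon E$ of $\mathsf{A}$. Finally I would set $F:=S(\widehat{\mathsf{A}})$, which lies in $S(\FV\varepsilon E)$, and compute for $(m,x)\in U$, using consistency of $(T^{E}_{m},T^{\K}_{m})_{m\in\M}$ for $(\mathcal{FV},E)$,
\[
T^{E}_{m}(F)(x)=T^{E}_{m}S(\widehat{\mathsf{A}})(x)=\widehat{\mathsf{A}}(T^{\K}_{m,x})=\mathsf{A}(T^{\K}_{m,x})=f(m,x),
\]
which means $R_{U,G}(F)=f$, proving surjectivity. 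I do not expect a genuine obstacle here: the only points needing care are the correct bookkeeping in \prettyref{rem:fix_top_1=2} (that $U$ fixing the topology in the sense of \prettyref{def:fix_top_1} translates into $(B_{j})_{j}$ fixing the topology in the sense of \prettyref{def:fix_top_2}, and that the $lb$-restriction space collapses to the diagonal condition) and the split into cases a)/b) matching the two branches of Theorem~\ref{thm:fix_top}; both are already provided by the earlier material. The proof is therefore, as announced in the text, just the proof of \cite[Theorem 17, p.\ 237]{B/F/J} with the set of uniqueness replaced by our more general $U$ fixing the topology.
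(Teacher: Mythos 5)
Your proof is correct and follows essentially the same route as the paper: reduce to Theorem~\ref{thm:fix_top} via \prettyref{rem:fix_top_1=2}, check boundedness on each $B_{j}$ through $\mathsf{A}(B_{j})=N_{U,j}(f)$, and get the density hypothesis from $G\subset(\mathsf{A}^{t})^{-1}(Y)$. The only (harmless) difference is that you define $\mathsf{A}$ on the unscaled functionals $T^{\K}_{m,x}$, whereas the paper defines it on $T^{\K}_{m,x}(\cdot)\nu_{j,j,m}(x)$ and only at the end invokes \eqref{loc3} together with the diagonally dominated and increasing hypothesis to find, for each $(m,x)\in U$, some $j$ with $(m,x)\in U_{j}$ and $\nu_{j,j,m}(x)>0$ --- this is exactly the fact you need (and tacitly use) to justify your identification $\operatorname{span}(\bigcup_{j}B_{j})=\operatorname{span}\{T^{\K}_{m,x}\;|\;(m,x)\in U\}$, so it deserves an explicit mention.
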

\begin{proof}
Let $f\in \mathcal{FV}_{G}(U,E)_{lb}$. We set $X:=\operatorname{span}(\bigcup_{j\in\N} B_{j})$ 
with $B_{j}$ from \prettyref{rem:fix_top_1=2} b) and $Y:=\FV$. 
Let $\mathsf{A}\colon X\to E$ be the linear map determined by 
\[
\mathsf{A}(T^{\K}_{m,x}(\cdot)\nu_{m,m}(x)):=f(m,x)\nu_{m,m}(x), 
\]
for $1\leq m\leq j$ and $(m,x)\in U_{m}$ with $U_{m}$ from \prettyref{rem:fix_top_1=2} b).
The map $\mathsf{A}$ is well-defined since $G$ is $\sigma(E',E)$-dense, and 
bounded on each $B_{j}$ because $\mathsf{A}(B_{j})=\bigcup_{m=1}^{j}N_{U,m}(f)$.
Let $e'\in G$ and $f_{e'}$ be the unique element in $\FV$ such that 
$T^{\K}_{m}(f_{e'})(x)=(e'\circ f)(m,x)$ for all $(m,x)\in U$, which implies 
$T^{\K}_{m}(f_{e'})(x)\nu_{m,m}(x)=(e'\circ \mathsf{A})(T^{\K}_{m,x}(\cdot)\nu_{m,m}(x))$ for all $(m,x)\in U_{m}$.
This equation allows us to consider $f_{e'}$ as a linear form on $X$ 
(by $f_{e'}(T^{\K}_{m,x}(\cdot)\nu_{m,m}(x)):=(e'\circ \mathsf{A})(T^{\K}_{m,x}(\cdot)\nu_{m,m}(x))$), 
which yields $e'\circ \mathsf{A}\in\FV$ for all $e'\in G$. It follows that
$G\subset(\mathsf{A}^{t})^{-1}(Y)$. Noting that $G$ is $\sigma(E',E)$-dense, we 
apply \prettyref{thm:fix_top} and obtain an extension $\widehat{\mathsf{A}}\in\FV\varepsilon E$ of 
$\mathsf{A}$. We set $F:=S(\widehat{\mathsf{A}})$
and observe that for all $(m,x)\in U$ there is $j\in\N$, $j\geq m$, such that $(j,x)\in U_{j}$ and 
$\nu_{j,j}(x)>0$ by \eqref{eq:loc3} and because $U$ is chain-structured and $\mathcal{V}$ diagonally dominated and increasing.
Due to the proof of \prettyref{rem:fix_top_1=2} a) we have $f(j,x)=f(m,x)$ and thus
\begin{align*}
T^{E}_{m}(F)(x)&=T^{E}_{m}S(\widehat{\mathsf{A}})(x)=\widehat{\mathsf{A}}(T^{\K}_{m,x})
=\frac{1}{\nu_{j,j}(x)}\widehat{\mathsf{A}}(T^{\K}_{m,x}(\cdot)\nu_{j,j}(x))\\
&=\frac{1}{\nu_{j,j}(x)}\widehat{\mathsf{A}}(T^{\K}_{j,x}(\cdot)\nu_{j,j}(x))
=f(j,x)=f(m,x)
\end{align*}
by consistency, implying $R_{U,G}(F)=f$.
\end{proof}

In particular, condition a) is fulfilled if $E$ is semireflexive. Indeed, if $E$ is semireflexive, then $E$ is quasi-complete 
by \cite[Chap.\ IV, 5.5, Corollary 1, p.\ 144]{schaefer}
and $\overline{G}^{b(E',E)}=\overline{G}^{\mu(E',E)}=E'$ by \cite[11.4.1 Proposition, p.\ 227]{Jarchow} and the bipolar theorem. 
For instance, condition b) is satisfied if $E$ is a Fr\'echet space or $E=(\mathcal{C}^{\infty}_{P(\partial),b}(\Omega),\beta)$ 
with a hypoelliptic linear partial differential operator $P(\partial)^{\K}$ and open $\Omega\subset\R^{d}$,
which is a $B_{r}$-complete space by \prettyref{prop:strict_top_B_r} 
and may not be a Fr\'echet space by \prettyref{rem:strict_top_B_r}.

As stated, our preceding theorem generalises \cite[Theorem 17, p.\ 237]{B/F/J} 
where $\FV$ is a closed subspace of $\mathcal{CW}^{\infty}(\Omega)$ 
for open, connected $\Omega\subset\R^{d}$. A characterisation of sets 
that fix the topology in the space $\mathcal{CW}^{\infty}_{\overline{\partial}}(\Omega)$ 
of holomorphic functions on an open, connected set $\Omega\subset\C$ is given in \cite[Remark 14, p.\ 235]{B/F/J}. 
The characterisation given in \cite[Remark 14 (b), p.\ 235]{B/F/J} is still valid and applied in \cite[Corollary 18, p.\ 238]{B/F/J} 
for closed subspaces of $\mathcal{CW}^{\infty}_{P(\partial)}(\Omega)$ 
where $P(\partial)^{\K}$ is a hypoelliptic linear partial differential operator which satisfies the maximum principle, 
namely, that $U\subset\Omega$ fixes the topology 
if and only if there is a sequence $(\Omega_{n})_{n\in\N}$ of relatively compact, open subsets of $\Omega$ 
with $\bigcup_{n\in\N}\Omega_{n}=\Omega$ such that $\partial\Omega_{n}\subset\overline{U\cap\Omega_{n+1}}$ for all $n\in\N$. 
Among the hypoelliptic operators $P(\partial)^{\K}$ satisfying the maximum principle 
are the Cauchy-Riemann operator $\overline{\partial}$ 
and the Laplacian $\Delta$. Further examples can be found in \cite[Corollary 3.2, p.\ 33]{gilbarg_trudinger2001}.
The statement of \cite[Corollary 18, p.\ 238]{B/F/J} for the space of holomorphic functions is itself a generalisation of 
\cite[Theorem 2, p.\ 401]{grosse-erdmann2004} with \cite[Remark 2 (a), p.\ 406]{grosse-erdmann2004} where $E$ is $B_{r}$-complete
and of \cite[Theorem 6, p.\ 10]{jorda2005} where $E$ is semireflexive. 
The case that $G$ is dense in $E_{b}'$ and $E$ is sequentially complete 
is covered by \cite[3.3 Satz, p.\ 228-229]{Gramsch1977}, not only for spaces of holomorphic functions, 
but for several classes of function spaces. 

Let us turn to other families of weights than $\mathcal{W}^{\infty}$. 
Due to \prettyref{prop:AV_CV_coincide_hol_har} we already know that $U:=\{0\}\times\C$ fixes the topology in 
$\mathcal{CV}^{\infty,\ast}_{\overline{\partial}}(\C)=\mathcal{AV}^{\infty}_{\overline{\partial}}(\C)$ and $U:=\{0\}\times\R^{d}$ in 
$\mathcal{CV}^{\infty,\ast}_{\Delta}(\R^{d})=\mathcal{AV}^{\infty}_{\Delta}(\R^{d})$ 
if $\mathcal{V}^{\infty}$ fulfils \prettyref{cond:weights}.
Next, we concentrate on the first case since smaller sets that fix the topology are known.

\begin{cor}
Let $E$ be an lcHs over $\C$, $G\subset E'$ a separating subspace, $\mathcal{V}^{\infty}$ fulfil \prettyref{cond:weights} and 
$U\subset\C$ fix the topology of $\mathcal{AV}^{\infty}_{\overline{\partial}}(\C)$. If 
 \begin{enumerate}
  \item [a)] $G$ is dense in $E_{b}'$ and $E$ locally complete, or if 
  \item [b)] $E$ is $B_{r}$-complete,
 \end{enumerate}
and $f\colon U\to E$ is a function in $\ell\mathcal{V}^{\infty}(U)$ such that $e'\circ f$ admits an extension 
$f_{e'}\in\mathcal{AV}^{\infty}_{\overline{\partial}}(\C)$ for each $e'\in G$, 
then there is a unique extension $F\in\mathcal{AV}^{\infty}_{\overline{\partial}}(\C,E)$ of $f$.
\end{cor}
\begin{proof}
The existence of $F$ follows from the proof of \prettyref{prop:AV_CV_coincide_hol_har} and \prettyref{thm:ext_FS_fix_top} 
with $(T^{E}_{m},T^{\C}_{m})_{m\in M}:=(\id_{E^{\C}},\id_{\C^{\C}})$. The uniqueness of $F$ is a result of \prettyref{prop:injectivity}.
\end{proof}

We have the following sufficient conditions on a family of weights $\mathcal{V}^{\infty}$ 
which guarantee the existence of a countable set 
$U\subset\C$ that fixes the topology of $\mathcal{AV}^{\infty}_{\overline{\partial}}(\C)$ due to Abanin and Varziev.

\begin{prop}\label{prop:AV_fix_top_Abanin}
Let $\mathcal{V}^{\infty}:=(\nu_{j})_{j\in\N}$ where $\nu_{j}(z):=\exp(a_{j}\mu(z)-\varphi(z))$, $z\in\C$, 
with some continuous, subharmonic function 
$\mu\colon\C\to[0,\infty)$, a continuous function $\varphi\colon\C\to\R$ and a strictly increasing, positive sequence $(a_{j})_{j\in\N}$ 
with $a:=\lim_{j\to\infty} a_{j}\in (0,\infty]$. Let there be
\begin{enumerate}
\item[(i)] $s\geq 0$ and $C>0$ such that $|\varphi(z)-\varphi(\zeta)|\leq C$ and $|\mu(z)-\mu(\zeta)|\leq C$ for all $z,\zeta\in\C$ with 
$|z-\zeta|\leq (1+|z|)^{-s}$,
\item[(ii)] $\max(\varphi(z),\mu(z))\leq |z|^{q}+C_{0}$ for some $q,C_{0}>0$ and
\item[(iii)] $\ln(|z|)=O(\mu(z))$ as $|z|\to\infty$ if $a=\infty$, or $\ln(|z|)=o(\mu(z))$ as $|z|\to\infty$ if $0<a<\infty$. 
\end{enumerate}
Let $(\lambda_{k})_{k\in\N}$ be the sequence of simple zeros of a function 
$L\in\mathcal{AV}^{\infty,1}_{\overline{\partial}}(\C)$ having no other zeros 
where $\mathcal{V}^{\infty,1}:=(\nu_{j}^{2}/\nu_{m_{j}})_{j\in\N}$ for some sequence $(m_{j})_{j\in\N}$ in $\N$. 
Suppose that there are $j_{0}\in\N$ and 
a sequence of circles $\{z\in\C\;|\;|z|=R_{m}\}$ with $R_{m}\nearrow\infty$ such that 
\[
|L(z)|\nu_{j_{0}}(z)\geq C_{m},\quad m\in\N,\,z\in\C,\,|z|=R_{m},
\]
for some $C_{m}\nearrow\infty$ and 
\[
|L'(\lambda_{k})|\nu_{j_{0}}(\lambda_{k})\geq 1\quad\text{for all sufficiently large}\;k\in\N.
\]
Then $\mathcal{V}^{\infty}$ fulfils \prettyref{cond:weights} for all $a\in(0,\infty]$ and $U:=(\lambda_{k})_{k\in\N}$ fixes the topology 
of $\mathcal{AV}^{\infty}_{\overline{\partial}}(\C)$ if $a=\infty$. 
If $\mu$ is a radial function, i.e.\ $\mu(z)=\mu(|z|)$, $z\in\C$, with $\mu(2z)\sim\mu(z)$ as $|z|\to\infty$, then 
$U$ fixes the topology of $\mathcal{AV}^{\infty}_{\overline{\partial}}(\C)$ for all $a\in(0,\infty]$.
\end{prop}
\begin{proof}
First, we check that \prettyref{cond:weights} is satisfied. We set $k:=\max(s,2)$ 
and observe that (i) is also fulfilled with $k$ instead of $s$. 
Let $z\in\C$ and $\|\zeta\|_{\infty},\|\eta\|_{\infty}\leq (1/\sqrt{2})(1+|z|)^{-k}=:r(z)$. 
From $|\cdot|\leq \sqrt{2}\|\cdot\|_{\infty}$ and (i) it follows
\[
|\mu(z+\zeta)-\mu(z+\eta)|\leq |\mu(z+\zeta)-\mu(z)|+|\mu(z)-\mu(z+\eta)|\leq C
\] 
and thus $\mu(z+\zeta)\leq C+\mu(z+\eta)$. In the same way we obtain $-\varphi(z+\zeta)\leq C-\varphi(z+\eta)$. 
Hence we have
\[
a_{j}\mu(z+\zeta)-\varphi(z+\zeta)\leq C(a_{j}+1)+a_{j}\mu(z+\eta)-\varphi(z+\eta)
\]
for $j\in\N$, implying	
\[
\nu_{j}(z+\zeta)\leq e^{C(a_{j}+1)}\nu_{j}(z+\eta),
\]
which means that $(\alpha.1)$ holds. By (iii) there are $\varepsilon>0$ and $R>0$ such that $\ln(|z|)\leq \varepsilon \mu(z)$ for all 
$z\in\C$ with $|z|\geq R$ if $a=\infty$. This yields for all $|z|\geq \max(2,R)$ that
\[
a_{j}\mu(z)+k\ln(1+|z|)\leq a_{j}\mu(z)+k\ln(|z|^2) = a_{j}\mu(z)+2k\ln(|z|)\leq a_{j}\mu(z)+2k\varepsilon\mu(z).
\] 
Since $a=\infty$, there is $n\in\N$ such that $a_{n}\geq a_{j}+2k\varepsilon$, resulting in 
\[
a_{j}\mu(z)+k\ln(1+|z|)\leq a_{n}\mu(z)
\] 
for all $|z|\geq \max(2,R)$. Therefore we derive 
\begin{equation}\label{eq:AV_fix_top}
a_{j}\mu(z)+k\ln(1+|z|)\leq a_{n}\mu(z)+k\ln(1+\max(2,R))
\end{equation}
for all $z\in\C$, which means that $(\alpha.2)$ and $(\alpha.3)$ hold with $\psi_{j}(z):=r(z)$. 
If $0<a<\infty$, for every $\varepsilon>0$ there is $R>0$ such that $\ln(|z|)\leq \varepsilon \mu(z)$ for all 
$z\in\C$ with $|z|\geq R$ by (iii). Thus we may choose $\varepsilon>0$ such that $a_{j+1}-a_{j}\geq 2k\varepsilon>0$ 
because $(a_{j})$ is strictly increasing. We deduce that \eqref{eq:AV_fix_top} with $n:=j+1$ holds in this case as well and 
$(\alpha.2)$ and $(\alpha.3)$, too.

Observing that the condition that $U=(\lambda_{k})_{k\in\N}$ is the sequence of simple zeros of a function 
$L\in\mathcal{AV}^{\infty,1}_{\overline{\partial}}(\C)$ means that $L\in\mathscr{L}(\Phi^{a}_{\varphi,\mu};U)$ 
and (i) that $\varphi$ and $\mu$ vary slowly w.r.t.\ $r(z):=(1+|z|)^{-s}$ in the notation of 
\cite[Definition, p.\ 579, 584]{abanin2013} and \cite[p.\ 585]{abanin2013}, respectively, 
the statement that $U$ fixes the topology is a consequence of \cite[Theorem 2, p.\ 585-586]{abanin2013}.
\end{proof}

\begin{rem}\label{rem:ex_fix_top_AP}
\begin{enumerate}
\item[a)] Let $D\subset\C$ be convex, bounded and open with $0\in D$. 
Let $\varphi(z):=H_{D}(z):=\sup_{\zeta\in D}\re(z\zeta)$, $z\in\C$, be the supporting 
function of $D$, $\mu(z):=\ln(1+|z|)$, $z\in\C$, and $a_{j}:=j$, $j\in\N$. 
Then $\varphi$ and $\mu$ fulfil the conditions of \prettyref{prop:AV_fix_top_Abanin} with $a=\infty$ by \cite[p.\ 586]{abanin2013} 
and the existence of an entire function $L$ which fulfils the conditions of \prettyref{prop:AV_fix_top_Abanin} is guaranteed by 
\cite[Theorem 1.6, p.\ 1537]{abanin2011}. Thus there is a countable set $U:=(\lambda_{k})_{k\in\N}\subset\C$ which fixes the topology 
in $A^{-\infty}_{D}:=\mathcal{AV}^{\infty}_{\overline{\partial}}(\C)$ with $\mathcal{V}^{\infty}:=(\exp(a_{j}\mu-\varphi))_{j\in\N}$.
\item[b)] An explicit construction of a set $U:=(\lambda_{k})_{k\in\N}\subset\C$ which fixes the topology in $A^{-\infty}_{D}$ is given 
in \cite[Algorithm 3.2, p.\ 3629]{abanin2010}. This construction does not rely on the entire function $L$. 
In particular (see \cite[p.\ 15]{bonet2017}), for $D:=\D$ we have $\varphi(z)=|z|$, 
for each $k\in\N$ we may take $l_{k}\in\N$, $l_{k}>2\pi k^2$, 
and set $\lambda_{k,j}:=kr_{k,j}$, $1\leq j\leq l_{k}$, where $r_{k,j}$ denote the $l_{k}$-roots of unity. 
Ordering $\lambda_{k,j}$ in a sequence of one index appropriately, we obtain a sequence which fixes the topology of $A^{-\infty}_{\D}$.
\item[c)] Let $\mu\colon\C\to[0,\infty)$ be a continuous, subharmonic, radial function which increases with $|z|$ and satisfies 
\begin{enumerate}
\item[(i)] $\sup_{\zeta\in\C,\|\zeta\|_{\infty}\leq r(z)}\mu(z+\zeta)\leq \inf_{\zeta\in\C,\|\zeta\|_{\infty}\leq r(z)}\mu(z+\zeta)+C$ 
for some continuous function $r\colon\C\to (0,1]$ and $C>0$,
\item[(ii)] $\ln(1+|z|^2)=o(\mu(|z|))$ as $|z|\to\infty$,
\item[(iii)] $\mu(2|z|)=O(\mu(|z|))$ as $|z|\to\infty$. 
\end{enumerate}
Then $\mathcal{V}^{\infty}:=(\exp(-(1/j)\mu))_{j\in\N}$ fulfils \prettyref{cond:weights} where $(\alpha.1)$ follows from (i) and 
$(\alpha.2)$, $(\alpha.3)$ as in the proof of \prettyref{prop:AV_fix_top_Abanin}. 
If $\mu(|z|)=o(|z|^2)$ as $|z|\to\infty$ or $\mu(|z|)=|z|^2$, $z\in\C$, then $U:=\{\alpha n+i\beta m\;|\;n,m\in\Z\}$ fixes 
the topology in $A_{\mu}^{0}:=\mathcal{AV}^{\infty}_{\overline{\partial}}(\C)$ 
for any $\alpha,\beta>0$ by \cite[Corollary 4.6, p.\ 20]{bonet2017} 
and \cite[Proposition 4.7, p.\ 20]{bonet2017}, respectively. 
\item[d)] For instance, the conditions on $\mu$ in c) are  fulfilled for $\mu(z):=|z|^{\gamma}$, $z\in\C$, with $0<\gamma\leq 2$ 
by \cite[1.5 Examples (3), p.\ 205]{meise1987}. If $\gamma=1$, then $A_{\mu}^{0}=A^{0}_{\overline{\partial}}(\C)$ is the space 
of entire functions of exponential type zero.
\item[e)] More general characterisations of countable sets that fix the topology of $\mathcal{AV}^{\infty}_{\overline{\partial}}(\C)$ 
are given in \cite[Theorem 1, p.\ 580]{abanin2013} and \cite[Theorem 4.5, p.\ 17]{bonet2017}.
\end{enumerate}
\end{rem}

The spaces $A_{\mu}^{0}$ from c) are known as H\"ormander algebras and the space $A^{-\infty}_{D}(\C)$ considered in a) is isomorphic 
to the strong dual of the Korenblum space $A^{-\infty}(D)$ via Laplace transform by \cite[Proposition 4, p.\ 580]{melikhov2004}.
\section{Extension of sequentially bounded functions}
In this section we restrict to the case that $E$ is a Fr\'echet space and $G\subset E'$ is generated by a sequence that 
fixes the topology in $E$.

\begin{defn}[{$sb$-restriction space}]
Let $E$ be a Fr\'{e}chet space, $(B_{n})$ fix the topology in $E$ and $G:=\operatorname{span}(\bigcup_{n\in\N} B_{n})$. 
Let $\FV$ be a $\dom$-space, $U$ a set of uniqueness for $(T^{\K}_{m},\mathcal{FV})_{m\in M}$ and
set
\[
\mathcal{FV}_{G}(U,E)_{sb}:=\{f\in\mathcal{FV}_{G}(U,E)\;|\;\forall\;n\in\N:\; \{f_{e'}\;|\;e'\in B_{n}\}\;
\text{is bounded in}\;\FV\}.
\]
\end{defn}

Let $E$ be a Fr\'{e}chet space, $(B_{n})$ fix the topology in $E$, $G:=\operatorname{span}(\bigcup_{n\in\N} B_{n})$,
$(T^{E}_{m},T^{\K}_{m})_{m\in M}$ be a strong, consistent generator for $(\mathcal{FV},E)$ 
and $U$ a set of uniqueness for $(T^{\K}_{m},\mathcal{FV})_{m\in M}$. 
For $u\in \FV\varepsilon E$ we have $R_{U,G}(f)\in\mathcal{FV}_{G}(U,E)$ with $f:=S(u)$
by \prettyref{rem:R_well-defined} and for $j\in J$ and $m\in M$ 
\[
\sup_{e'\in B_{n}}|f_{e'}|_{j,m}=\sup_{e'\in B_{n}}\sup_{x\in\omega_{m}}|e'(T^{E}_{m}(f)(x)\nu_{j,m}(x))|
=\sup_{e'\in B_{n}}\sup_{y\in N_{j,m}(f)}|e'(y)|
\]
with $N_{j,m}(f):=\{T^{E}_{m}(f)(x)\nu_{j,m}(x)\;|\;x\in\omega_{m}\}$. This set is bounded in $E$ since 
\[
\sup_{y\in N_{j,m}(f)}p_{\alpha}(f)=|f|_{j,m,\alpha}<\infty
\]
for all $\alpha\in\mathfrak{A}$, implying $\sup_{e'\in B_{n}}|f_{e'}|_{j,m}<\infty$ and 
$R_{U,G}(f)\in\mathcal{FV}_{G}(U,E)_{sb}$. 
Hence the injective linear map
\[
R_{U,G}\colon S(\FV\varepsilon E)\to \mathcal{FV}_{G}(U,E)_{sb},\;f\mapsto (T^{E}_{m}(f)(x))_{(m,x)\in U},
\]
is well-defined.

\begin{que}\label{que:surjective_sb_rest_space}
Let $E$ be a Fr\'{e}chet space, $(B_{n})$ fix the topology in $E$ and $G:=\operatorname{span}(\bigcup_{n\in\N} B_{n})$. 
Let $(T^{E}_{m},T^{\K}_{m})_{m\in M}$ be a strong, consistent generator for $(\mathcal{FV},E)$
and $U$ a set of uniqueness for $(T^{\K}_{m},\mathcal{FV})_{m\in M}$.
When is the injective restriction map 
\[
R_{U,G}\colon S(\FV\varepsilon E)\to \mathcal{FV}_{G}(U,E)_{sb},\;f\mapsto (T^{E}_{m}(f)(x))_{(m,x)\in U},
\]
surjective?
\end{que}

\begin{rem}
Let $E$ be a Fr\'{e}chet space with increasing system of seminorms $(p_{\alpha_{n}})_{n\in\N}$, $B_{n}:=B_{\alpha_{n}}^{\circ}$ where 
$B_{\alpha_{n}}:=\{x\in E\;|\;p_{\alpha_{n}}(x)<1\}$, 
$(T^{E}_{m},T^{\K}_{m})_{m\in M}$ a strong, consistent generator for $(\mathcal{FV},E)$
and $U$ a set of uniqueness for $(T^{\K}_{m},\mathcal{FV})_{m\in M}$. If 
\begin{enumerate}
\item[(i)] $\FV$ is a BC-space, or if
\item[(ii)] $U$ fixes the topology of $\FV$,
\end{enumerate}
then $\mathcal{FV}_{E'}(U,E)_{sb}=\mathcal{FV}_{E'}(U,E)$ by \prettyref{prop:ext_F_semi_M} in (i) 
resp.\ \prettyref{rem:rest_spaces_coincide} and \prettyref{prop:ext_E_semi_M} in (ii). Hence 
\prettyref{thm:ext_F_semi_M} (i) resp.\ \prettyref{thm:fix_topo_E_semi_M} answers \prettyref{que:surjective_sb_rest_space} in these cases. 
\end{rem}

Let us turn to the case where $G$ need not coincide with $E'$.

\begin{prop}[{\cite[Lemma 9, p.\ 504]{F/J}}]\label{prop:seq_bound}
 Let $E$ be a Fr\'{e}chet space, $(B_{n})$ fix the topology in $E$, $Y$ a Fr\'{e}chet-Schwartz space and 
 $X\subset Y_{b}'(=Y_{\kappa}')$ a dense subspace. Set $G:=\operatorname{span}(\bigcup_{n\in\N} B_{n})$ and let
 $\mathsf{A}\colon X\to E$ be a linear map which is 
 $\sigma(X,Y)$-$\sigma(E,G)$-continuous and satisfies that $\mathsf{A}^{t}(B_{n})$ is bounded in $Y$ 
 for each $n\in\N$. 
 Then $\mathsf{A}$ has a (unique) extension $\widehat{\mathsf{A}}\in Y\varepsilon E$.
\end{prop}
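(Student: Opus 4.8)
The plan is to show that $\mathsf{A}$ is continuous from $X$, equipped with the topology induced by $Y_{\kappa}'$, into $E$, and then to extend it by density and completeness. Since $Y$ is Fr\'echet--Schwartz it is reflexive and Fr\'echet--Montel, so $Y_{b}'=Y_{\kappa}'$, bounded subsets of $Y$ are relatively compact, and $(Y_{\kappa}')'=Y$ via the canonical pairing. The first step is to unwind the hypothesis: the $\sigma(X,Y)$-$\sigma(E,G)$-continuity of $\mathsf{A}$ means precisely that for every $e'\in G$ the functional $e'\circ\mathsf{A}\colon X\to\K$ is $\sigma(X,Y)$-continuous. Since $X$ is dense in $Y_{\kappa}'$, hence in the coarser $Y_{\sigma}'$, the space $(X,\sigma(X,Y))$ is a subspace of $Y_{\sigma}'$, so by Hahn--Banach this functional is the restriction to $X$ of an element of $(Y_{\sigma}')'=Y$, which is moreover unique by $\sigma(Y',Y)$-density of $X$; call it $\mathsf{A}^{t}(e')$, so $\langle\mathsf{A}^{t}(e'),x\rangle=e'(\mathsf{A}x)$ for $x\in X$. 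In particular the hypothesis is meaningful and asserts that $N_{n}:=\mathsf{A}^{t}(B_{n})$ is a bounded subset of $Y$ for each $n\in\N$.

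Next I would set $K_{n}:=\oacx(N_{n})$. As the closed absolutely convex hull of a relatively compact (because $Y$ is Montel) subset of the complete space $Y$, the set $K_{n}$ is absolutely convex and compact, just as in the proof of \prettyref{thm:ext_FV_semi_M} and by \cite[6.2.1 Proposition, p.\ 103]{Jarchow} together with \cite[6.7.1 Proposition, p.\ 112]{Jarchow}; hence the seminorm $p_{K_{n}}\colon x\mapsto\sup_{y\in K_{n}}|\langle y,x\rangle|$ is continuous on $Y_{\kappa}'$. For $x\in X$ with $p_{K_{n}}(x)\le 1$ and $e'\in B_{n}$ we obtain $|e'(\mathsf{A}x)|=|\langle\mathsf{A}^{t}(e'),x\rangle|\le p_{K_{n}}(x)\le 1$, so $\mathsf{A}x\in B_{n}^{\circ}$. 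Since $(B_{n})$ fixes the topology in the Fr\'echet space $E$, the polars $(B_{n}^{\circ})_{n\in\N}$ form a fundamental system of zero neighbourhoods of $E$, and the line above shows that $\mathsf{A}$ maps the $\kappa$-neighbourhood $\{x\in X\;|\;p_{K_{n}}(x)\le 1\}$ of $0$ into $B_{n}^{\circ}$. Therefore $\mathsf{A}\colon X\to E$ is continuous for the topology induced by $Y_{\kappa}'$.

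Finally, $X$ is dense in $Y_{\kappa}'$ and $E$ is complete, so the continuous linear map $\mathsf{A}$ extends uniquely to a continuous linear map $\widehat{\mathsf{A}}\colon Y_{\kappa}'\to E$, that is, $\widehat{\mathsf{A}}\in L(Y_{\kappa}',E)=Y\varepsilon E$ (as vector spaces); a second extension in $Y\varepsilon E$ would be a continuous linear map on $Y_{\kappa}'$ agreeing with $\mathsf{A}$ on the dense subspace $X$, hence equal to $\widehat{\mathsf{A}}$, which gives uniqueness. The step I expect to cost the most care is the bookkeeping around $\mathsf{A}^{t}$: making $\mathsf{A}^{t}(e')$ a genuine element of $Y$ (so that the boundedness hypothesis even parses) and pinning down $\langle\mathsf{A}^{t}(e'),x\rangle=e'(\mathsf{A}x)$ for $x\in X$, both of which rely on the density of $X$ in $Y_{\kappa}'$ and the reflexivity of $Y$. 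The remainder is a routine polar-and-compactness estimate followed by the standard density extension, entirely in the spirit of the surjectivity proofs earlier in the paper.
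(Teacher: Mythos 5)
Your proof is correct: the paper states this proposition only as a quotation of \cite[Lemma 9, p.\ 504]{F/J} and gives no proof of its own, and your argument — identifying $\mathsf{A}^{t}(e')$ as an element of $Y$ via the duality $(X,\sigma(X,Y))'=Y$ (using density of $X$ and Hausdorffness of $Y$), passing to the compact sets $\overline{\operatorname{acx}}(\mathsf{A}^{t}(B_{n}))$ by the Montel property, deducing continuity of $\mathsf{A}$ for the topology induced by $Y_{\kappa}'$ from the fact that $(B_{n}^{\circ})$ is a basis of zero neighbourhoods in $E$, and extending by density and completeness — is the standard route and, as far as one can compare, essentially the argument of the cited reference. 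No gaps.
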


Next, we improve \cite[Theorem 1 ii), p.\ 501]{F/J}.

\begin{thm}\label{thm:ext_FS_set_uni_seq_bounded}
Let $E$ be a Fr\'{e}chet space, $(B_{n})$ fix the topology in $E$ and $G:=\operatorname{span}(\bigcup_{n\in\N} B_{n})$,
$(T^{E}_{m},T^{\K}_{m})_{m\in M}$ a strong, consistent generator for $(\mathcal{FV},E)$, 
$\FV$ a Fr\'{e}chet-Schwartz space and $U$ a set of uniqueness for $(T^{\K}_{m},\mathcal{FV})_{m\in M}$. 
Then the restriction map $R_{U,G}\colon S(\FV\varepsilon E)\to \mathcal{FV}_{G}(U,E)_{sb}$ is surjective.
\end{thm}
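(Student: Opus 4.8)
The plan is to mimic the proofs of \prettyref{thm:ext_FS_set_uni} and \prettyref{thm:ext_FS_fix_top}, replacing the abstract extension result by \prettyref{prop:seq_bound}. First I would fix $f\in\mathcal{FV}_{G}(U,E)_{sb}$ and set $X:=\operatorname{span}\{T^{\K}_{m,x}\mid(m,x)\in U\}$ and $Y:=\FV$. Since $U$ is a set of uniqueness, $X$ is dense in $\FV_{\kappa}'=\FV_{b}'$ (the last equality holding because $\FV$ is Fr\'{e}chet--Schwartz, hence Fr\'{e}chet--Montel), so $X$ is an admissible choice of dense subspace for \prettyref{prop:seq_bound}. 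I would then define the linear map $\mathsf{A}\colon X\to E$ by $\mathsf{A}(T^{\K}_{m,x}):=f(m,x)$; this is well-defined because $G$ is $\sigma(E',E)$-dense and hence separating, so the values $f(m,x)$ are compatible with every linear relation among the $T^{\K}_{m,x}$ once tested against $G$.

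Next I would verify the two hypotheses of \prettyref{prop:seq_bound}. For the continuity: given $e'\in G$, the unique $f_{e'}\in\FV$ from \prettyref{rem:R_f} satisfies $e'\circ\mathsf{A}(T^{\K}_{m,x})=e'\circ f(m,x)=T^{\K}_{m,x}(f_{e'})$ for all $(m,x)\in U$, so $e'\circ\mathsf{A}$ is the restriction to $X$ of the $\sigma(\FV',\FV)$-continuous functional $f'\mapsto f'(f_{e'})$; thus $\mathsf{A}$ is $\sigma(X,Y)$-$\sigma(E,G)$-continuous. For the boundedness condition, the same identity shows that $\mathsf{A}^{t}(e')$, as a functional on $X$, coincides with the restriction of $f_{e'}$ to $X$, and since $X$ is $\sigma(\FV',\FV)$-dense this identifies $\mathsf{A}^{t}(e')$ with $f_{e'}\in Y$; hence $\mathsf{A}^{t}(B_{n})=\{f_{e'}\mid e'\in B_{n}\}$, which is bounded in $\FV$ for every $n\in\N$ precisely because $f\in\mathcal{FV}_{G}(U,E)_{sb}$.

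Applying \prettyref{prop:seq_bound} then yields a (unique) extension $\widehat{\mathsf{A}}\in\FV\varepsilon E$ of $\mathsf{A}$, and I would set $F:=S(\widehat{\mathsf{A}})\in S(\FV\varepsilon E)$. Consistency of $(T^{E}_{m},T^{\K}_{m})_{m\in\M}$ gives, for every $(m,x)\in U$,
\[
T^{E}_{m}(F)(x)=T^{E}_{m}S(\widehat{\mathsf{A}})(x)=\widehat{\mathsf{A}}(T^{\K}_{m,x})=\mathsf{A}(T^{\K}_{m,x})=f(m,x),
\]
so $R_{U,G}(F)=f$, which proves surjectivity. The only genuinely delicate point is the bookkeeping around $\mathsf{A}^{t}$: one must read \prettyref{prop:seq_bound}'s requirement ``$\mathsf{A}^{t}(B_{n})$ bounded in $Y$'' through the canonical identification of these functionals on $X$ with elements of $\FV$ afforded by the density of $X$ in $\FV_{\kappa}'$, after which the condition becomes exactly the defining property of $\mathcal{FV}_{G}(U,E)_{sb}$; everything else is routine and parallels the earlier theorems.
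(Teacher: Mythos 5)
Your proposal is correct and follows essentially the same route as the paper's proof: the same choice of $X$, $Y$ and $\mathsf{A}$, the same verification that $\mathsf{A}$ is $\sigma(X,Y)$-$\sigma(E,G)$-continuous with $\mathsf{A}^{t}(B_{n})=\{f_{e'}\mid e'\in B_{n}\}$ bounded, and the same application of \prettyref{prop:seq_bound} followed by consistency to conclude $R_{U,G}(S(\widehat{\mathsf{A}}))=f$. The extra care you take in identifying $\mathsf{A}^{t}(e')$ with $f_{e'}$ via the density of $X$ is exactly the implicit step in the paper's displayed equality $\sup_{e'\in B_{n}}|\mathsf{A}^{t}(e')|_{j,l}=\sup_{e'\in B_{n}}|f_{e'}|_{j,l}$.
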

\begin{proof}
Let $f\in \mathcal{FV}_{G}(U,E)_{sb}$. We set $X:=\operatorname{span}\{T^{\K}_{m,x}\;|\;(m,x)\in U\}$ and 
$Y:=\FV$. Let $\mathsf{A}\colon X\to E$ be the linear map determined by
$\mathsf{A}(T^{\K}_{m,x}):=f(m,x)$ which is well-defined since $G$ is $\sigma(E',E)$-dense. 
From 
\[
e'(\mathsf{A}(T^{\K}_{m,x}))=(e'\circ f)(m,x)=T^{\K}_{m,x}(f_{e'})
\]
for every $e'\in G$ and $(m,x)\in U$ it follows that $\mathsf{A}$ is $\sigma(X,Y)$-$\sigma(E,G)$-continuous and
\[
\sup_{e'\in B_{n}}|\mathsf{A}^{t}(e')|_{j,k}=\sup_{e'\in B_{n}}|f_{e'}|_{j,k}<\infty
\]
for all $j\in J$, $k\in M$ and $n\in\N$. Due to \prettyref{prop:seq_bound} there is 
an extension $\widehat{\mathsf{A}}\in\FV\varepsilon E$ of $\mathsf{A}$. 
We set $F:=S(\widehat{\mathsf{A}})$ and get for all $(m,x)\in U$ that
\[
T^{E}_{m}(F)(x)=T^{E}_{m}S(\widehat{\mathsf{A}})(x)=\widehat{\mathsf{A}}(T^{\K}_{m,x})=f(m,x)
\]
by consistency, which means $R_{U,G}(F)=f$.
\end{proof}

\begin{cor}
Let $E$ be a Fr\'{e}chet space, $(B_{n})$ fix the topology in $E$ and $G:=\operatorname{span}(\bigcup_{n\in\N} B_{n})$. 
Let $\mathcal{V}^{\infty}$ fulfil \prettyref{cond:weights} and 
$U\subset\R^{d}$ be a set of uniqueness for $(\id_{\K^{\R^{d}}},\mathcal{AV}^{\infty}_{P(\partial)})$ 
where $P(\partial)=\overline{\partial}$ or $\Delta$. 
If $f\colon U\to E$ is a function such that $e'\circ f$ admits an extension 
$f_{e'}\in\mathcal{AV}^{\infty}_{P(\partial)}(\R^{d})$ for each $e'\in G$ 
and $\{f_{e'}\;|\;e'\in B_{n}\}$ is bounded in $\mathcal{AV}^{\infty}_{P(\partial)}(\R^{d})$ for each $n\in\N$, 
then there is a unique extension $F\in\mathcal{AV}^{\infty}_{P(\partial)}(\R^{d},E)$ of $f$.
\end{cor}
\begin{proof}
$\mathcal{AV}^{\infty}_{P(\partial)}(\R^{d})$ is a Fr\'echet-Schwartz space and $(\id_{E^{\R^{d}}},\id_{\K^{\R^{d}}})$ a 
strong, consistent generator for $(\mathcal{AV}^{\infty}_{P(\partial)},E)$ by \prettyref{prop:AV_CV_coincide_hol_har} 
and the proof of \prettyref{cor:weak_strong_CV}. Now, \prettyref{thm:ext_FS_set_uni_seq_bounded} and \prettyref{prop:injectivity} 
prove our statement.
\end{proof}

We already mentioned examples of families of weights $\mathcal{V}^{\infty}$ 
that fulfil \prettyref{cond:weights} and sets of uniqueness for $(\id_{\K^{\R^{d}}},\mathcal{AV}^{\infty}_{P(\partial)})$ 
in \prettyref{rem:ex_NF_cond_weights}, \prettyref{rem:ex_set_uni_AP} and \prettyref{rem:ex_fix_top_AP}. 
If $E$ is a Banach space, then an almost norming set fixes the topology and examples can be found via \prettyref{rem:ex_almost_norming}.
\section{Representation by sequence spaces}
Our last section is dedicated to the representation of weighted spaces of $E$-valued functions by weighted
spaces of $E$-valued sequences if there is a counterpart of this representation in the scalar-valued case 
involving the coefficient functionals associated to a Schauder basis (see \prettyref{rem:Schauder_coeff_set_uni} b)).

\begin{thm}\label{thm:Schauder_coeff_space}
Let $E$ be locally complete, $G\subset E'$ determine boundedness and $\F$ and $\FE$ resp.\ $\ell(\N)$ and $\ell(\N,E)$ 
be $\varepsilon$-into-compatible with $e'\circ g\in\ell(\N)$ for all $e'\in E'$ and $g\in\ell(\N,E)$.
Let $\F$ have an equicontinuous Schauder basis with associated coefficient functionals $(T^{\K}_{n})_{n\in \N}$ such that 
\[
T^{\K}\colon \F\to\ell(\N),\;T^{\K}(f):=(T^{\K}_{n}(f))_{n\in \N},
\]
is an isomorphism and let there be $T^{E}\colon \FE\to E^{\N}$ such that $(T^{E},T^{\K})$ is 
a strong, consistent family for $(\mathcal{F},E)$.
If 
\begin{enumerate}
\item[(i)] $\F$ is a Fr\'{e}chet-Schwartz space, or if
\item[(ii)] $E$ is sequentially complete, $G=E'$ and $\F$ is a semi-Montel BC-space,
\end{enumerate}
then the following holds.
\begin{enumerate}
\item[a)] $\mathcal{F}_{G}(\N,E)=\ell(\N,E)$.
\item[b)] $\ell(\N)$ and $\ell(\N,E)$ are $\varepsilon$-compatible, in particular, $\ell(\N)\varepsilon E\cong\ell(\N,E)$.
\item[c)] The map 
\[
T^{E}\colon \FE\to\ell(\N,E),\;T^{E}(f):=(T^{E}_{n}(f))_{n\in \N},
\]
is a well-defined isomorphism, $\F$ and $\FE$ are $\varepsilon$-compatible, in particular, 
$\F\varepsilon E\cong\FE$, and $T^{E}=S_{\ell(\N)}\circ(T^{\K}\varepsilon\id_{E})\circ S_{\F}^{-1}$.
\end{enumerate}
\end{thm}
\begin{proof}
a)(1) First, we remark that $\N$ is a set of uniqueness for $(T^{\K},\mathcal{F})$. 
Let $u\in\F\varepsilon E$ and $n\in\N$. Then 
\begin{align}\label{eq:Schauder_coeff_space}
 R_{\N,G}(S_{\F}(u))(n)&=(T^{E}\circ S_{\F})(u)(n)=T^{E}_{n}(S_{\F}(u))=u(T^{\K}_{n})=u(\delta_{n}\circ T^{\K})\notag\\
&=(u\circ (T^{\K})^{t})(\delta_{n})
 =(T^{\K}\varepsilon\id_{E})(u)(\delta_{n})\notag\\
&=\bigl(S_{\ell(\N)}\circ (T^{\K}\varepsilon\id_{E})\bigr)(u)(n)
\end{align}
by consistency and the $\varepsilon$-into-compatibility, yielding $\mathcal{F}_{G}(\N,E)\subset \ell(\N,E)$ once we have shown that $R_{\N,G}$ is surjective, which we postpone to part b). 

a)(2) Let $g\in\ell(\N,E)$. Then $e'\circ g\in\ell(\N)$ for all $e'\in E'$ and $g_{e'}:=(T^{\K})^{-1}(e'\circ g)\in\F$. 
We note that $T^{\K}_{n}(g_{e'})=(e'\circ g)(n)$ for all $n\in\N$, which implies $\ell(\N,E)\subset \mathcal{F}_{G}(\N,E)$. 

b) We only need to show that $S_{\ell(\N)}$ is surjective. Let $g\in\ell(\N,E)$, which implies 
$g\in\mathcal{F}_{G}(\N,E)$ by part a)(2). 

We claim that $R_{\N,G}$ is surjective. In case (i) this follows directly from \prettyref{thm:ext_FS_set_uni}. 
Let us turn to case (ii) and denote by $(f_{n})_{n\in\N}$ the equicontinuous Schauder basis of $\F$ 
associated to $(T^{\K}_{n})_{n\in \N}$. We check that condition (ii) of \prettyref{thm:ext_F_semi_M} 
is fulfilled. Let $f'\in\F'$ and set 
\[
f_{k}'\colon \F\to\K,\;f_{k}'(f):=\sum_{n=1}^{k}T^{\K}_{n}(f)f'(f_{n}),
\]
for $k\in\N$. Then $f_{k}'\in\F'$ for every $k\in\N$ and $(f_{k}')$ converges to $f'$ in $\F_{\sigma}'$ 
since $(\sum_{n=1}^{k}T^{\K}_{n}(f)f_{n})$ converges to $f$ in $\F$. From the equicontinuity of the 
Schauder basis we deduce that $(f_{k}')$ converges to $f'$ in $\F_{\kappa}'$ by \cite[8.5.1 Theorem (b), p.\ 156]{Jarchow}. 
Let $f\in\mathcal{F}_{E'}(\N,E)$. For each $e'\in E'$ and $k\in\N$ we have 
\[
\mathscr{R}^{t}_{f}(f_{k}')(e')=f_{k}'(f_{e'})=\sum_{n=1}^{k}T^{\K}_{n}(f_{e'})f'(f_{n})
=e'(\sum_{n=1}^{k}f(n)f'(f_{n}))
\]
since $f\in\mathcal{F}_{E'}(\N,E)$, implying $\mathscr{R}^{t}_{f}(f_{k}')\in\mathcal{J}(E)$. 
Hence we can apply \prettyref{thm:ext_F_semi_M} (ii) and obtain that $R_{\N,E'}$ is surjective, finishing the proof of part a)(1).

Thus there is $u\in\F\varepsilon E$ such that $R_{\N,E'}(S_{\F}(u))=g$ in both cases.
Then $(T^{\K}\varepsilon\id_{E})(u)\in\ell(\N)\varepsilon E$ and from \eqref{eq:Schauder_coeff_space} we derive 
\[
S_{\ell(\N)}((T^{\K}\varepsilon\id_{E})(u))=R_{\N,G}(S_{\F}(u))=g,
\]
proving the surjectivity of $S_{\ell(\N)}$. 

c) First, we note that map $T^{E}$ is well-defined. Indeed, we have $(e'\circ T^{E})(f)=T^{\K}(e'\circ f)\in\ell(\N)$ 
for all $f\in\FE$ and $e'\in E'$ by the strength of the family. Part a) implies that $T^{E}(f)\in\mathcal{F}_{G}(\N,E)=\ell(\N,E)$ 
and thus the map $T^{E}$ is well-defined and its linearity follows from the linearity of the $T^{E}_{n}$ for $n\in\N$.
Next, we prove that $T^{E}$ is surjective. 
Let $g\in\ell(\N,E)$. Since $T^{\K}\varepsilon\id_{E}$ is an isomorphism 
and $S_{\ell(\N)}$ by part b) as well, we obtain that 
$u:=((T^{\K}\varepsilon\id_{E})^{-1}\circ S_{\ell(\N)}^{-1})(g)\in \F\varepsilon E$. 
Therefore $S_{\F}(u)\in\FE$ and from \eqref{eq:Schauder_coeff_space} we get
\[
T^{E}(S_{\F}(u))=(T^{E}\circ S_{\F})(u)=\bigl(S_{\ell(\N)}\circ (T^{\K}\varepsilon\id_{E})\bigr)(u)=g,
\]
which means that $T^{E}$ is surjective. The injectivity of $T^{E}$ by \prettyref{prop:injectivity}, implies that 
\[
S_{\F}=(T^{E})^{-1}\circ\bigl(S_{\ell(\N)}\circ (T^{\K}\varepsilon\id_{E})\bigr),
\]
yielding the surjectivity of $S_{\F}$ and thus the $\varepsilon$-compatibility of $\F$ and $\FE$. 
Furthermore, we have $T^{E}=S_{\ell(\N)}\circ (T^{\K}\varepsilon\id_{E})\circ S_{\F}^{-1}$, resulting in 
$T^{E}$ being an isomorphism. 
\end{proof}

We note that a Schauder basis of $\F$ is already equicontinuous by the uniform boundedness principle 
if $\F$ is barrelled. Further, the index set of the equicontinuous Schauder basis of $\F$ 
in \prettyref{thm:Schauder_coeff_space} need not be $\N$ but may be any other countable index set 
as long as the equicontinuous Schauder basis is unconditional which is always fulfilled 
if $\F$ is nuclear by \cite[21.10.1 Dynin-Mitiagin Theorem, p.\ 510]{Jarchow}.

Let us demonstrate an application of the preceding theorem to Fourier expansions of vector-valued $2\pi$-periodic smooth functions 
and the multiplier space of the Schwartz space. 
We equip the space $\mathcal{C}^{\infty}(\R^{d},E)$ for an lcHs $E$ 
with the system of seminorms generated by 
\[
|f|_{K,m,\alpha}:=\sup_{\substack{x\in \R^{d}\\ \beta\in\N_{0}^{d},|\beta|\leq m}}
p_{\alpha}((\partial^{\beta})^{E}f(x))\chi_{K}(x),\quad f\in\mathcal{C}^{\infty}(\R^{d},E),
\]
for $K\subset\R^{d}$ compact, $m\in\N_{0}$ and $\alpha\in\mathfrak{A}$, i.e.\ we consider $\mathcal{CW}^{\infty}(\R^{d},E)$.
By $\mathcal{C}^{\infty}_{2\pi}(\R^{d},E)$ we denote its topological subspace consisting of the functions 
which are $2\pi$-periodic in each variable. 
If $E$ is a locally complete lcHs over $\C$, then the function given by $x\mapsto f(x)e^{-i\langle n,x\rangle_{\R^d}}$ 
is Pettis-integrable on $[-\pi,\pi]^{d}$ 
for every $f\in\mathcal{C}^{\infty}_{2\pi}(\R^{d},E)$ and $n\in\Z^{d}$ by \cite[Lemma 4.7, p.\ 369]{kruse2018_1} 
where $\langle\cdot,\cdot\rangle_{\R^d}$ is the usual scalar product on $\R^{d}$. 
Hence we are able to define the $n$-th Fourier coefficient of $f\in\mathcal{C}^{\infty}_{2\pi}(\R^{d},E)$ by the Pettis-integral
\[
\widehat{f}(n):=(2\pi)^{-d}\int_{[-\pi,\pi]^{d}}f(x)e^{-i\langle n,x\rangle_{\R^d}}\d x, \quad n\in\Z^{d},
\]
if $E$ is locally complete. Our aim is to prove that the map $f\mapsto (\widehat{f}(n))_{n\in\Z^{d}}$ is an isomorphism 
from $\mathcal{C}^{\infty}_{2\pi}(\R^{d},E)$ to the space $s(\Z^{d},E)$ of rapidely decreasing $E$-valued sequences 
given by
\[
s(\Omega,E):=\{x=(x_{n})\in E^{\Omega}\;|\;\forall\;j\in\N,\,\alpha\in\mathfrak{A}:\; 
|x|_{j,\alpha}:=\sup_{n\in\Omega}p_{\alpha}(x_{n})(1+|n|^{2})^{j/2}<\infty\},
\]
where $\Omega=\Z^{d}$ or $\N_{0}^{d}$.

\begin{cor}\label{cor:Schauder_coeff_space_2pi_per}
If $E$ is a locally complete lcHs over $\C$, then 
$\mathcal{C}^{\infty}_{2\pi}(\R^{d},E)\cong \mathcal{C}^{\infty}_{2\pi}(\R^{d})\varepsilon E$ and
$s(\Z^{d},E)\cong s(\Z^d)\varepsilon E$
and the map 
 \[
  \mathfrak{F}^{E}\colon \mathcal{C}^{\infty}_{2\pi}(\R^{d},E)\to s(\Z^d,E),\;\mathfrak{F}^{E}(f):=(\widehat{f}_{n})_{n\in\Z^{d}},
 \]
is an isomorphism with
$\mathfrak{F}^{E}=S_{s(\Z^{d})}\circ(\mathfrak{F}^{\C}\varepsilon\id_{E})\circ S_{\mathcal{C}^{\infty}_{2\pi}(\R^{d})}^{-1}$.
\end{cor}
\begin{proof}
By the proof of \cite[Theorem 4.10, p.\ 373]{kruse2018_1} the spaces $\mathcal{C}^{\infty}_{2\pi}(\R^{d})$ and
$\mathcal{C}^{\infty}_{2\pi}(\R^{d},E)$ are $\varepsilon$-compatible. Moreover, the spaces $s(\Z^{d})$ and $s(\Z^d,E)$ 
are $\varepsilon$-into-compatible by \prettyref{thm:S_iso_into} and it is obvious that 
$e'\circ x\in s(\Z^{d})$ for every $e'\in E'$ and $x\in s(\Z^d,E)$. 
The space $\mathcal{C}^{\infty}_{2\pi}(\R^{d})$ is a nuclear Fr\'echet space, in particular, barrelled and 
thus its Schauder basis $(e^{-i\langle n,\cdot\rangle_{\R^d}})$ is unconditional and equicontinuous. 
The corresponding coefficient 
functionals are given by $(\delta_{n}\circ \mathfrak{F}^{\C})$ and the map $\mathfrak{F}^{\C}$ is an isomorphism 
(see e.g. \cite[Satz 1.7, p.\ 18]{Kaballo}). Again, we derive from the proof of \cite[Theorem 4.10, p.\ 373]{kruse2018_1} 
that the family $(\mathfrak{F}^{E},\mathfrak{F}^{\C})$ is strong and consistent 
for $(\mathcal{C}^{\infty}_{2\pi},E)$. Now, we can apply \prettyref{thm:Schauder_coeff_space}, yielding our statement. 
\end{proof}

The preceding corollary improves a special case of \cite[Theorem 4.2, p.\ 364]{kruse2018_1} and 
\cite[Theorem 4.11, p.\ 375]{kruse2018_1} from sequentially complete $E$ to locally complete $E$. 
In the same way we can prove the corresponding result for the Schwartz space $\mathcal{S}(\R^{d},E)$ and $s(\N_{0}^{d},E)$ 
with sequentially complete $E$ which is given in \cite[Theorem 4.9 a), p.\ 371]{kruse2018_1} by a different proof. 
For the space $\mathcal{CW}^{\infty}_{\overline{\partial}}(\D_{R}(0),E)$, $0<R\leq\infty$, 
of holomorphic functions and the K\"othe space $\lambda^{\infty}(A_{R},E)$ with K\"othe matrix $A_{R}:=(r_{k}^{j})_{j\in\N_{0},k\in\N}$ 
for some strictly increasing sequence $(r_{k})_{k\in\N}$ in $(0,R)$ converging to $R$ and locally complete $E$ 
a corresponding statement may be proved using \cite[Example 27.27, p.\ 341-342]{meisevogt1997} 
where the map $T^{E}$ assigns to each holomorphic function on $\D_{R}(0)$ its sequence of Taylor coefficients.

Let us turn to the space of multipliers for the Schwartz space defined by 
\[
\qquad\quad \mathcal{O}_{M}(\R^{d},E)
:=\{f\in \mathcal{C}^{\infty}(\R^{d},E)\;|\;\forall\;g\in\mathcal{S}(\R^{d}),\,
m\in\N_{0},\,\alpha\in \mathfrak{A}:\;\|f\|_{g,m,\alpha}<\infty\}
\]
where
\[
 \|f\|_{g,m,\alpha}:=\sup_{\substack{x\in\mathbb{R}^{d}\\
  \beta\in\N_{0}^{d},|\beta|\leq m}}
 p_{\alpha}\bigl((\partial^{\beta})^{E}f(x)\bigr)|g(x)|
\]
(see \cite[$4^{0}$), p.\ 97]{Schwartz1955}). For simplicity we restrict to the case $d=1$.
Fix a compactly supported test function $\varphi\in\mathcal{C}^{\infty}_{c}(\R)$ with $\varphi(x)=1$ for $x\in[0,\frac{1}{4}]$ 
and $\varphi(x)=0$ for $x\geq\frac{1}{2}$.   
For $f\in\mathcal{C}^{\infty}(\R,E)$ we set 
\[
f_{j}(x):=f(x+j)-\sum_{k=0}^{\infty}a_{k}\varphi(-2^k(x-1))f(-2^k(x-1+j)+1) ,\;x\in[0,1],\,j\in\Z,
\]
where 
\[
 a_{k}:=\prod_{j=0,j\neq k}^{\infty}\frac{1+2^j}{2^j-2^k},\;k\in\N_{0}.
\]
Fixing $x\in[0,1)$, we observe that $f_{j}(x)$ is well-defined for each $j\in\Z$ since there are only finitely many summands 
due to the compact support of $\varphi$ and $-2^k(x-1)\to\infty$ for $k\to\infty$. For $x=1$ we have $f_{j}(1)=0$ for each $j$ 
and the convergence of the series in $E$ follows from the uniform continuity of $f$ on $[0,1]$, $f(0)=0$ 
and $\sum_{k=0}^{\infty}a_{k}=1$ by the case $n=0$ in \cite[Lemma (iii), p.\ 625]{seeley1964}. 
For each $e'\in E'$ we note that 
\[
e'(f_{j}(x))=(e'\circ f)(x+j)-\sum_{k=0}^{\infty}a_{k}\varphi(-2^k(x-1))(e'\circ f)(-2^k(x-1+j)+1),\,x\in[0,1],j\in\Z,
\]
which implies that $e'\circ f_{j}\in \mathcal{E}_{0}$ by \cite[Proposition 3.2, p.\ 15]{bargetz2014}. 
Using the weak-strong principle \prettyref{cor:weak_strong_E_0}, we obtain 
that $f_{j}\in\mathcal{E}_{0}(E)$ for all $j\in\Z$ if $E$ is locally complete. Setting 
\[
 \rho\colon\R\to[0,1],\;\rho(x):=1-\cos(\arctan(x))=1-\frac{1}{\sqrt{1+x^2}},
\]
we deduce from the proof and with the notation of \cite[Proposition 2.2, p.\ 1494]{bargetz2015} that 
$e'\circ f_{j}\circ\rho=(\Phi_{2}^{-1}\circ\Phi_{1})(e'\circ f_{j})$ is an element of the Schwartz space $\mathcal{S}(\R)$ 
for each $e'\in E'$. The weak-strong principle \prettyref{cor:weak_strong_CV} c) yields that $f_{j}\circ\rho\in\mathcal{S}(\R,E)$ 
if $E$ is locally complete. Hence $(f_{j}\circ\rho)\cdot h_{2n}$ is Pettis-integrable on $\R$  
for every $j\in\Z$ and $n\in\N_{0}$ by \cite[Proposition 4.8, p.\ 370]{kruse2018_1}  
if $E$ is sequentially complete where 
\[
h_{n}\colon\R\to\R,\;
h_{n}(x):=(2^{n}n!\sqrt{\pi})^{-1/2}(x-\frac{d}{dx})^{n}e^{-x^{2}/2},
\]
is the $n$-th Hermite function. Therefore the Pettis-integral
\[
 b_{n,j}(f):=\langle f_{j}\circ\rho, h_{2n}\rangle_{L^{2}}:=\int_{\R}f_{j}(\rho(x))h_{2n}(x)\d x,\;j\in\Z,\,n\in\N_{0},
\]
is a well-defined element of $E$ if $E$ is sequentially complete. 
By \cite[Theorem 2.1, p.\ 1496-1497]{bargetz2015} (cf.\ \cite[Theorem 3, p.\ 478]{valdivia1981}) the map 
 \[
  \Theta^{\K}\colon \mathcal{O}_{M}(\R)\to s(\N)_{b}'\widehat{\otimes}_{\pi}s(\N),\;\Theta^{\K}(f):=(b_{\sigma(n,j)}(f))_{(n,j)\in\N^2},
 \]
is an isomorphism where $\sigma\colon\N^{2}\to\N_{0}\times\Z$ is the enumeration given 
by $\sigma(n,j):=(n-1,(j-1)/2)$ if $j$ is odd, and $\sigma(n,j):=(n-1,-j/2)$ if $j$ is even. 
Here, we have to interpret $\Theta^{\K}(f)$ as an element of $s(\N)_{b}'\widehat{\otimes}_{\pi}s(\N)$ by identification 
of isomorphic spaces. Namely, 
\[
s(\N)_{b}'\widehat{\otimes}_{\pi}s(\N)\cong s(\N)\widehat{\otimes}_{\pi}s(\N)_{b}'
\cong s(\N) \varepsilon s(\N)_{b}'\cong s(\N,s(\N)_{b}')
\]
holds where the first isomorphy is due to the commutativity of $\widehat{\otimes}_{\pi}$, the second due to 
the nuclearity of $s(\N)$ and the last due to \cite[Theorem 4.2, p.\ 364]{kruse2018_1} via $S_{s(\N)}$. 
Then we intrepret $\Theta^{\K}(f)$ as an element of $s(\N,s(\N)_{b}')$ by means of
\[
j\in\N \longmapsto [a\in s(\N)\mapsto \sum_{n\in\N}a_{n}b_{\sigma(n,j)}]
\]
(see also \eqref{eq:Schauder_coeff_space_multiplier_1} below).

\begin{cor}\label{cor:Schauder_coeff_space_multiplier}
If $E$ is a sequentially complete lcHs, then $\mathcal{O}_{M}(\R,E)\cong \mathcal{O}_{M}(\R)\varepsilon E$ and the map 
 \[
 \Theta^{E}\colon \mathcal{O}_{M}(\R,E)\to s(\N,L_{b}(s(\N),E)),\;\Theta^{E}(f):=(b_{\sigma(n,j)}(f))_{(n,j)\in\N^2},
 \]
is an isomorphism where we interpret $\Theta^{E}(f)$ as an element of $s(\N,L_{b}(s(\N),E))$.
\end{cor}
\begin{proof}
The spaces $\mathcal{O}_{M}(\R)$ and $\mathcal{O}_{M}(\R,E)$ are $\varepsilon$-into-compatible 
by \prettyref{prop:S_iso_into_standard_example} e) as $\mathcal{O}_{M}(\R)$ is a complete barrelled nuclear space, 
in particular a Montel space, by \cite[Chap.\ II, \S4, n$^\circ$4, Th\'{e}or\`{e}me 16, p.\ 131]{Gro}. 

Next, we show that $S_{\mathcal{O}_{M}(\R)}$ is surjective. We only need to prove that condition (ii) of 
\prettyref{thm:ext_F_semi_M} is fulfilled with $\mathcal{F}_{E'}(U,E)$ replaced by $\mathcal{O}_{M}(\R,E)$,
which is \cite[3.16 Condition c)]{kruse2017a}. Then 
we may apply \cite[3.17 Theorem, p.\ 12]{kruse2017a} to obtain the surjectivity. 
Let $f'\in\mathcal{O}_{M}(\R)'$. Using the equicontinuous unconditional Schauder basis 
$(\psi_{\sigma(i,j)})_{(i,j)\in\N^{2}}$ with associated coefficients functionals 
$\delta_{i,j}\circ\Theta^{\K}=b_{\sigma(i,j)}$ given in \cite[Proposition 3.2, p.\ 1499]{bargetz2015}, we set for $n\in\N$
\[
f_{n}'\colon \mathcal{O}_{M}(\R)\to\K,\;f_{n}'(f):=\sum_{(i,j)\in\N^2,\,|(i,j)|\leq n}b_{\sigma(i,j)}(f)f'(\psi_{\sigma(i,j)}).
\]
Along the lines of the proof of \prettyref{thm:Schauder_coeff_space} b)(ii)
we derive that $(f_{n}')$ converges to $f'$ in $\mathcal{O}_{M}(\R)_{\kappa}'$. 
Let $f\in\mathcal{O}_{M}(\R,E)$. For each $e'\in E'$ and $(i,j)\in\N^2$ we have
\begin{align}\label{eq:Schauder_coeff_space_multiplier}
  \delta_{i,j}\circ\Theta^{\K}(e'\circ f)
&=b_{\sigma(i,j)}(e'\circ f)
 =\int_{\R}(e'\circ f)_{(j-1)/2}(\rho(x))h_{2(i-1)}(x)\d x \notag\\
&=\langle e',\int_{\R}f_{(j-1)/2}(\rho(x))h_{2(i-1)}(x)\d x\rangle
 =\langle e',\delta_{i,j}\circ\Theta^{E}(f)\rangle\notag\\
&=e'(b_{\sigma(i,j)}(f))
\end{align}
if $j$ is odd since $(f_{(j-1)/2}\circ\rho)\cdot h_{2(i-1)}$ is Pettis-integrable on $\R$. 
The analogous result holds for even $j$ as well. This implies
\begin{align*}
\mathscr{R}^{t}_{f}(f_{n}')(e')&=f_{n}'(e'\circ f)=\sum_{(i,j)\in\N^2,\, |(i,j)|\leq n}b_{\sigma(i,j)}(e'\circ f)f'(\psi_{\sigma(i,j)})\\
&=e'\bigl(\sum_{(i,j)\in\N^2,\, |(i,j)|\leq n}b_{\sigma(i,j)}(f)f'(\psi_{\sigma(i,j)})\bigr),
\end{align*}
yielding $\mathscr{R}^{t}_{f}(f_{n}')\in\mathcal{J}(E)$. 
This shows that \cite[3.16 Condition c)]{kruse2017a} is fulfilled and the surjectivity of $S_{\mathcal{O}_{M}(\R)}$ 
is a consequence of \cite[3.17 Theorem, p.\ 12]{kruse2017a} in combination with \cite[Lemma 13 a), p.\ 1523]{kruse2017} 
as $\mathcal{O}_{M}(\R)$ is a Montel space.  

Further, we deduce from \eqref{eq:Schauder_coeff_space_multiplier} that 
$(\Theta^{E},\Theta^{\K})$ is a strong family for $(\mathcal{O}_{M},E)$. 
By \cite[3.17 Theorem, p.\ 12]{kruse2017a} the inverse of $S_{\mathcal{O}_{M}(\R)}$ is given by the map 
$f\mapsto \mathcal{J}^{-1}\circ \mathscr{R}^{t}_{f}$. Let $u\in\mathcal{O}_{M}(\R)\varepsilon E$. Then 
$f:=S_{\mathcal{O}_{M}(\R)}(u)\in\mathcal{O}_{M}(\R,E)$ and for each $(i,j)\in\N^{2}$
we get
\begin{align*}
u(\delta_{i,j}\circ\Theta^{\K})&=S_{\mathcal{O}_{M}(\R)}^{-1}(f)(\delta_{i,j}\circ\Theta^{\K})
= \mathcal{J}^{-1}\bigl(\mathscr{R}^{t}_{f}(\delta_{i,j}\circ\Theta^{\K})\bigr)
 \underset{\eqref{eq:Schauder_coeff_space_multiplier}}{=}b_{\sigma(i,j)}(f)\\
&=(\delta_{i,j}\circ\Theta^{E})(S_{\mathcal{O}_{M}(\R)}(u)),
\end{align*}
implying the consistency of our family. 

In order to apply \prettyref{thm:Schauder_coeff_space} we need spaces $\ell\mathcal{V}(\N^2)$ and $\ell\mathcal{V}(\N^2,E)$ of 
sequences with values in $\K$ and $E$, respectively. In addition, the space $\ell\mathcal{V}(\N^2)$ has to be isomorphic 
to $s(\N,s(\N)_{b}')$ so that $\Theta^{\K}\colon\mathcal{O}_{M}(\R)\to s(\N,s(\N)_{b}')\cong\ell\mathcal{V}(\N^2)$ 
becomes the isomorphism we need for \prettyref{thm:Schauder_coeff_space}. We set
\[
\ell\mathcal{V}(\N^2,E):=\{x=(x_{n,j})\in E^{\N^2}\;|\;\forall\;k\in\N,\,B\subset s(\N)\;\text{bounded},\,\alpha\in\mathfrak{A}:\;
\|x\|_{k,B,\alpha}<\infty\}
\]
where 
\[
\|x\|_{k,B,\alpha}:=\sup_{(j,a)\in\omega_{B}}p_{\alpha}(T^{E}(x)(j,a))\nu_{k,B}(j,a)
\]
with $\omega_{B}:=\N\times B$ and $\nu_{k,B}\colon \omega_{B}\to [0,\infty)$, $\nu_{k,B}(j,a):=(1+j^2)^{k/2}$, plus 
\[
T^{E}(x)(j,a):=\sum_{n\in\N}a_{n}x_{n,j}.
\]
We claim that the map 
\begin{equation}\label{eq:Schauder_coeff_space_multiplier_1}
T^{E}\colon\ell\mathcal{V}(\N^2,E)\to s(\N,L_{b}(s(\N),E)),\; x\mapsto (T^{E}(x)(j,\cdot))_{j\in\N},
\end{equation}
is an isomorphism. We remark for each $k\in\N$, bounded $B\subset s(\N)$ and $\alpha\in\mathfrak{A}$ that
\[
|T^{E}(x)|_{k,(B,\alpha)}=\sup_{j\in\N}\sup_{a\in B}p_{\alpha}(T^{E}(x)(j,a))(1+j^2)^{k/2}=\|x\|_{k,B,\alpha}
\]
for all $x\in\ell\mathcal{V}(\N^2,E)$, implying that $T^{E}$ is an isomorphism into.  
Let $y:=(y_{j})\in s(\N,L_{b}(s(\N),E))$. Then $y_{j}\in L_{b}(s(\N),E)$ for $j\in\N$ and 
we set $x_{n,j}:=y_{j}(e_{n})$ for $n\in\N$ where $e_{n}$ is the $n$-th unit sequence in $s(\N)$. We note 
that with $x:=(x_{n,j})_{(n,j)\in\N^2}$ 
\[
T^{E}(x)(j,a)=\sum_{n\in\N}a_{n}x_{n,j}=\sum_{n\in\N}a_{n}y_{j}(e_{n})=y_{j}(\sum_{n\in\N}a_{n}e_{n})=y_{j}(a)
\]
holds for all $j\in\N$ and $a:=(a_{n})\in s(\N)$ since $(e_{n})$ is a Schauder basis of $s(\N)$ 
with associated coefficient functionals $a\mapsto a_{n}$. It follows that $x\in\ell\mathcal{V}(\N^2,E)$ 
and the surjectivity of $T^{E}$. 

The next step is to prove that $\ell\mathcal{V}(\N^2)$ and $\ell\mathcal{V}(\N^2,E)$ are $\varepsilon$-into-compatible. 
Due to \prettyref{thm:S_iso_into} we only need to show that $(T^{E},T^{\K})$ is a consistent generator 
for $(\ell\mathcal{V},E)$. Let $u\in\ell\mathcal{V}(\N^2)\varepsilon E$. Then 
\begin{equation}\label{eq:Schauder_coeff_space_multiplier_2}
\sum_{n=1}^{m}a_{n}S_{\ell\mathcal{V}(\N^{2})}(u)(j,n)=\sum_{n=1}^{m}a_{n}u(\delta_{j,n})=u(\sum_{n=1}^{m}a_{n}\delta_{j,n})
\end{equation}
for all $m\in\N$ and $a:=(a_{n})\in s(\N)$. Since 
\[
(\sum_{n=1}^{m}a_{n}\delta_{j,n})(x)=\sum_{n=1}^{m}a_{n}x_{j,n}\to T^{\K}(x)(j,a)=T^{\K}_{(j,a)}(x),\quad m\to\infty,
\]
for all $x\in\ell\mathcal{V}(\N^2)$, we deduce that $(\sum_{n=1}^{m}a_{n}\delta_{j,n})_{m}$ converges 
to $T^{\K}_{(j,a)}(x)$ in $\ell\mathcal{V}(\N^2)_{\kappa}'$ by the Banach-Steinhaus theorem, 
which is applicable as $\ell\mathcal{V}(\N^2)\cong s(\N,s(\N)_{b}')\cong\mathcal{O}_{M}(\R)$ is barrelled. 
We conclude that 
\[
 u(T^{\K}_{(j,a)})
=\lim_{m\to\infty}u(\sum_{n=1}^{m}a_{n}\delta_{j,n})
\underset{\eqref{eq:Schauder_coeff_space_multiplier_2}}{=}\sum_{n=1}^{\infty}a_{n}S_{\ell\mathcal{V}(\N^{2})}(u)(j,n)
=T^{E}S_{\ell\mathcal{V}(\N^{2})}(u)(j,a)
\]
and thus the consistency of $(T^{E},T^{\K})$. 

Furthermore, we clearly have $e'\circ x\in\ell\mathcal{V}(\N^2)$ for all $x\in\ell\mathcal{V}(\N^2,E)$ and the map 
$\Theta\colon \mathcal{O}_{M}(\R)\to s(\N)_{b}'\widehat{\otimes}_{\pi}s(\N)\cong\ell\mathcal{V}(\N^2)$ 
is an isomorphism by \cite[Theorem 2.1, p.\ 1496-1497]{bargetz2015} and \eqref{eq:Schauder_coeff_space_multiplier_1}. 
Due to \cite[Chap.\ II, \S4, n$^\circ$4, Th\'{e}or\`{e}me 16, p.\ 131]{Gro} the dual $\mathcal{O}_{M}(\R)_{b}'$ is an 
LF-space and thus $\mathcal{O}_{M}(\R)\cong(\mathcal{O}_{M}(\R)_{b}')_{b}'$ is the strong dual of an LF-space by reflexivity
and therefore webbed by \cite[Satz 7.25, p.\ 165]{Kaballo}. Finally, we can apply \prettyref{thm:Schauder_coeff_space} (ii), 
yielding our statement. 
\end{proof}

\begin{rem}
The actual isomorphism in \prettyref{cor:Schauder_coeff_space_multiplier} (without the interpretation) is given by
$\widetilde{\Theta}^{E}:=T^{E}\circ\Theta^{E}$ with $T^{E}$ from \eqref{eq:Schauder_coeff_space_multiplier_1} and we have
\[
 \widetilde{\Theta}^{E}=T^{E}\circ\Theta^{E}
=T^{E}\circ S_{\ell\mathcal{V}(\N^{2})}\circ (\Theta^{\K}\varepsilon\id_{E})\circ S^{-1}_{\mathcal{O}_{M}(\R)}.
\]
\end{rem}

For quasi-complete $E$ the $\varepsilon$-compatibility $\mathcal{O}_{M}(\R^{d},E)\cong \mathcal{O}_{M}(\R^{d})\varepsilon E$ 
is already contained in \cite[Proposition 9, p.\ 108, Th\'{e}or\`{e}me 1, p.\ 111]{Schwartz1955}.
\bibliography{biblio}
\bibliographystyle{plainnat}
\end{document}